\newtheorem{definition}{Definition}[section]
\newtheorem{theorem}[definition]{Theorem}
\newtheorem{lemma}[definition]{Lemma}
\newtheorem{example}[definition]{Example}
\newtheorem{note}[definition]{Note}
\newtheorem{proposition}[definition]{Proposition}
\begin{document}
\title{\bf 
Using a $q$-shuffle algebra to describe the \\basic module $V(\Lambda_0)$
for the quantized\\  enveloping algebra $U_q(\widehat{\mathfrak{sl}}_2)$}
 \author{
Paul Terwilliger 
}
\date{}
%\footnote{This author gratefully acknowledges 
%support from the FY2007 JSPS Invitation Fellowship Program
%for Reseach in Japan (Long-Term), grant L-07512.}
%}
%\date{}
%to get date printout, comment out above line

\maketitle
\begin{abstract} We consider the quantized enveloping algebra   $U_q(\widehat{\mathfrak{sl}}_2)$
and its basic module $V(\Lambda_0)$. This module is infinite-dimensional, irreducible, integrable, and highest-weight.
We describe $V(\Lambda_0)$ using a $q$-shuffle algebra in the following way.
Start with the free associative algebra $\mathbb V$ on two generators $x,y$.
The standard basis for 
$\mathbb V$ consists of the words in $x,y$.
In 1995 M. Rosso introduced 
an associative algebra structure on $\mathbb V$, called a
$q$-shuffle algebra.
For $u,v\in \lbrace x,y\rbrace$ their 
$q$-shuffle product is
$u\star v = uv+q^{(u,v)}vu$, where
$( u,v) =2$ 
(resp. $(u,v) =-2$)
if $u=v$ (resp. 
 $u\not=v$).
 Let $\mathbb U$ denote the subalgebra of the $q$-shuffle algebra $\mathbb V$
 that is generated by $x, y$. Rosso showed that the algebra $\mathbb U$ is isomorphic to the positive part of $U_q(\widehat{\mathfrak{sl}}_2)$.
 In our first main result, we turn $\mathbb U$ into a  $U_q(\widehat{\mathfrak{sl}}_2)$-module.
Let $\bf U$ denote the $U_q(\widehat{\mathfrak{sl}}_2)$-submodule of $\mathbb U$ generated by the empty word.
In our second main result, we show that the $U_q(\widehat{\mathfrak{sl}}_2)$-modules $\bf U$ and $V(\Lambda_0)$ are isomorphic.
Let $\bf V$ denote the subspace of $\mathbb V$ spanned by the words that do not begin with $y$ or $xx$.
In our third main result, we show that $\bf U = \mathbb U \cap {\bf V}$.
\bigskip

\noindent
{\bf Keywords}.  Quantized enveloping algebra, $q$-Serre relations, basic module, $q$-shuffle algebra.
\hfil\break
\noindent {\bf 2020 Mathematics Subject Classification}. 
Primary: 17B37. Secondary: 05E14, 81R50.

 \end{abstract}
 
 \section{Introduction}
 The quantized enveloping algebra $U_q(\widehat{\mathfrak{sl}}_2)$ is associative, noncommutative, and infinite dimensional. A presentation by generators and relations is given in Appendix B below.
 The algebra  $U_q(\widehat{\mathfrak{sl}}_2)$ has a subalgebra $U^+_q$, called the positive part. 
 The algebra $U^+_q$
 has a presentation involving two generators $A, B$ and two relations, called the $q$-Serre relations:
 \begin{align*}
\lbrack A, \lbrack A, \lbrack A, B\rbrack_q \rbrack_{q^{-1}} \rbrack =0, \qquad \qquad 
\lbrack B, \lbrack B, \lbrack B, A \rbrack_q \rbrack_{q^{-1}}\rbrack = 0.
\end{align*}

 \noindent Both $U^+_q$ and $U_q(\widehat{\mathfrak{sl}}_2)$  are well known in algebraic combinatorics \cite{ariki,drg,TD00},  representation theory \cite{beck,bcp,charp,xxz}, and mathematical physics \cite{baspp, basXXZ, DaviesPlus, JM}.
In the following paragraphs, we describe a few situations in which $U^+_q$ and $U_q(\widehat{\mathfrak{sl}}_2)$ play a role.
 \medskip

 \noindent 
 There is an object in algebraic combinatorics called a tridiagonal pair \cite{TD00}. Roughly speaking, this  is a pair of diagonalizable linear maps 
 on a finite-dimensional vector space, that each act on the eigenspaces of the other one in a block-tridiagonal fashion.
  According to \cite[Example~1.7]{TD00}, for a finite-dimensional irreducible $U^+_q$-module $V$ on which the generators $A, B$ are not nilpotent,
  the pair $A, B$ act on $V$ as a tridiagonal pair.
 The resulting tridiagonal pair is said to have $q$-geometric type or $q$-Serre type. This type of tridiagonal pair is described in 
 \cite{CurtH, hasan2, hasan,shape, uqsl2hat, nonnil, qtet, drg, TwoRel, yy2}.

 \medskip
 
 %\noindent The algebra $U^+_q$ is infinite-dimensional.
 %In \cite{shape} a basis for $U^+_q$ is introduced, called the zigzag basis. 
 %This basis is used in \cite[Theorems ~1.6, 1.7]{shape} to describe the shape vector \cite[Section~1]{shape} for a tridiagonal pair of $q$-geometric type.
 %\medskip
 
% \noindent In algebraic combinatorics, there is a family of finite, undirected
 %graphs said to be  distance-regular
%\cite{banIto, BCN, dkt}.
% The article \cite{drg} describes a type of  distance-regular graph $\Gamma$ with the following property: for the adjacency matrix $\mathcal A$ and each dual adjacency matrix $\mathcal A^*$ of $\Gamma$,
 %there exist  scalars $r,s, r^*, s^*$ with $r,r^*$ nonzero such that
% $r {\mathcal A} + s I, r^* {\mathcal A^*} + s^* I$ satisfy the $q$-Serre relations.
 %These relations are used in \cite[Theorem~11.1, Remark~11.7]{drg} to get four $U_q(\widehat{\mathfrak{sl}}_2)$-actions
 %on the standard module of $\Gamma$. See \cite{wenLiu, LPspin, boyd} for related work.
 %\medskip

 \noindent Another object in algebraic combinatorics is a  partially ordered set $Y$ called the Young lattice \cite[p.~288]{stanley}. The elements of $Y$ are the Young diagrams (partitions), and the partial order is given by
 diagram inclusion. 
 Define a vector space $V$ consisting of the formal linear combinations of $Y$. In the Hayashi realization \cite[Theorem~10.6]{ariki}
 the vector space $V$ becomes an integrable $U_q(\widehat{\mathfrak{sl}}_2)$-module with the following features.
 Each Young diagram $\lambda$ is a weight vector. The  $U_q(\widehat{\mathfrak{sl}}_2)$-generators $E_0, E_1, F_0, F_1$ act on $\lambda$ as follows.
 Color the boxes of $\lambda$ alternating blue and red, with the top left box colored blue. The generator $E_0$ (resp. $E_1$) sends $\lambda$ to a linear combination of the Young diagrams $\mu$
  obtained from $\lambda$ by removing a blue box (resp. red box). In this linear combination the $\mu$-coefficient is a power of $q$ that depends on the location of the
 box $\lambda/\mu$. Similarly, $F_0$ (resp. $F_1$) sends $\lambda$ to a linear combination of the Young diagrams $\mu$
obtained from $\lambda$ by adding a blue box (resp. red box). In this linear combination the $\mu$-coefficient is a power of $q$ that depends on the location of
 the box $\mu/\lambda$.
 The  $U_q(\widehat{\mathfrak{sl}}_2)$-submodule of $V$ generated by the empty Young diagram  is denoted $V(\Lambda_0)$ and called the basic representation.
 The $U_q(\widehat{\mathfrak{sl}}_2)$-module $V(\Lambda_0)$ is  infinite-dimensional, irreducible, integrable, and highest-weight. For more detail about $V(\Lambda_0)$ see \cite[Chapter~10]{ariki}, \cite[Section~9]{hongkang}, \cite[Chapter~5]{JM}.
 \medskip

\noindent Next we recall an embedding, due to M. Rosso \cite{rosso1, rosso} of $U^+_q$ into a $q$-shuffle algebra.
Start with a free associative algebra $\mathbb V$ on two generators $x,y$. These generators are called letters.
For $n\geq 0$, a word of length $n$ in $\mathbb V$ is a product of letters $\ell_1 \ell_2 \cdots \ell_n$.
We interpret the word of length 0 to be the multiplicative identity in $\mathbb V$; this word is called trivial and denoted by $\bf 1$.
The words in $\mathbb V$ form a basis for the vector space $\mathbb V$; this basis is called standard.
In \cite{rosso1, rosso} M. Rosso introduced 
an associative algebra structure on $\mathbb V$, called a
$q$-shuffle algebra.
For letters $u,v$ their 
$q$-shuffle product is
$u\star v = uv+q^{( u,v) }vu$, where
$( u,v)=2$ 
(resp. $(u,v) =-2$) 
if $u=v$ (resp. 
 $u\not=v$).
 In \cite[Theorem~15]{rosso} Rosso gave an injective algebra homomorphism
$\natural$ from $U^+_q$ into the $q$-shuffle algebra
${\mathbb V}$, that sends $A\mapsto x$ and $B\mapsto y$.
\medskip

\noindent We mention some applications of the map $\natural : U^+_q \to \mathbb V$. 
In \cite{damiani} I. Damiani obtained a   Poincar\'e-Birkhoff-Witt (or PBW) basis for 
 $U^+_q$ whose elements are
 defined recursively.  In \cite[Proposition~6.1]{beck} J. Beck obtained another  PBW basis for $U^+_q$ by adjusting some of the elements in the Damiani PBW basis.
In \cite{catalan} (resp. \cite{catBeck}) we applied the map $\natural $ to the Damiani (resp. Beck) PBW basis, and expressed the images
in the standard basis for $\mathbb V$. We gave the images in closed form \cite[Theorem~1.7]{catalan}, \cite[Theorem~7.1]{catBeck}. The map $\natural$ is used in \cite{alternating} to define the alternating
elements of $U^+_q$. In \cite[Theorem~10.1] {alternating} a set of alternating elements is shown to form a PBW basis for $U^+_q$. This PBW basis is said to be alternating \cite[Definition~10.3]{alternating}. 
In \cite{altCE} we used  the alternating elements
to obtain a central extension $\mathcal U^+_q$ of
$U^+_q$. The algebra $\mathcal U^+_q$ is defined by
generators and relations. These generators, said to be
alternating, are in bijection with the alternating elements of
$U^+_q$. 
By \cite[Lemma~3.3]{altCE} there exists a surjective algebra homomorphism
$\mathcal U^+_q \to U^+_q$ that sends
each alternating generator of 
$\mathcal U^+_q$ to the corresponding alternating element in $U^+_q$.
In \cite[Lemma~3.6]{altCE}  this homomorphism is adjusted to obtain an algebra isomorphism
$\mathcal U_q^+ \to U^+_q \otimes  
\mathbb F \lbrack z_1, z_2,\ldots\rbrack$
where $\mathbb F$ is the ground field and
$\lbrace z_n\rbrace_{n=1}^\infty$ are mutually commuting indeterminates.
By \cite[Theorem~10.2]{altCE} the alternating generators 
form a PBW basis for 
$\mathcal U_q^+$. The algebra $\mathcal U^+_q$ is called the alternating central extension of $U^+_q$ \cite{altCE, compactUqp}.
We remark that $\mathcal U^+_q$ is related
to the work of
Baseilhac, Koizumi, Shigechi concerning the $q$-Onsager
algebra and integrable lattice models \cite{BK05, basnc}.
See \cite{basFMA,
baspp, basXXZ, BKojima, pospart, compactUqp, factorUq, pbwqO, compQons, qOnsACE} for related work.
\medskip

 \noindent Turning to the present paper, our goal is to describe the basic $U_q(\widehat{\mathfrak{sl}}_2)$-module $V(\Lambda_0)$ using the
 $q$-shuffle algebra $\mathbb V$. We have three main results, which are summarized below.
 Let ${\rm End}(\mathbb V)$ denote the algebra  consisting  of the 
linear maps from $\mathbb V$ to $\mathbb V$. We now define some maps $X, Y, K$ in ${\rm End}(\mathbb V)$.
 The map $X$ (resp. $Y$) is the automorphism of the free algebra $\mathbb V$ that sends $x \mapsto q x$ and $y \mapsto y$
  (resp. $x \mapsto  x$ and $y \mapsto q y$).
  Define $K=X^2Y^{-2}$.
 Define the  maps  $A^*_L$, $B^*_L$, $A^*_R$, $B^*_R$ in ${\rm End}(\mathbb V)$ that send ${\bf 1} \mapsto 0$ and for a nontrivial word
$w=\ell_1\ell_2\cdots \ell_n$ in $\mathbb V$,
\begin{align*}
&
A^*_L w =
\ell_2\cdots \ell_n \delta_{\ell_1, x},
\qquad \qquad  \;\;
B^*_L w=
\ell_2\cdots \ell_n \delta_{\ell_1, y},
\\
&
A^*_R w=
\ell_1\cdots \ell_{n-1} \delta_{\ell_n, x},
\qquad \qquad
B^*_Rw =
\ell_1\cdots \ell_{n-1} \delta_{\ell_n, y}.
\end{align*}
Here $\delta_{r,s}$ is the Kronecker delta.
Define the maps  $A_\ell$, $B_\ell$, $A_r$, $B_r$ in ${\rm End}(\mathbb V)$ such that 
for $v \in \mathbb V$,
\begin{align*}
&
A_{\ell} v = x\star v, \quad \qquad
B_{\ell} v = y\star v, \qquad \quad
A_{r} v = v \star x, \quad \qquad
B_{r} v = v \star y. 
\end{align*}
 Let $\mathbb U$ denote the subalgebra of the
 $q$-shuffle algebra $\mathbb V$ that is generated by $x, y$. By construction the map $\natural : U^+_q \to \mathbb U$ is an algebra isomorphism.
 Our first main result is that  $\mathbb U$ becomes a  $U_q(\widehat{\mathfrak{sl}}_2)$-module  on which the  $U_q(\widehat{\mathfrak{sl}}_2)$-generators act as follows:

 \bigskip
 
 \centerline{
 \begin{tabular}{c| ccccccc}
{\rm generator} &  $E_0$ & $F_0$ & $K^{\pm 1}_0$  & $E_1$ & $F_1$ & $K^{\pm 1} _1$  & $D^{\pm 1}$ 
\\
\hline
{\rm action on $ \mathbb U$} & $A^*_R$ & $ \frac{ q A_r K^{-1} - q^{-1} A_{\ell}}{q-q^{-1}}$ & $q^{\pm 1}K^{\mp 1}$  & $B^*_R$ & $ \frac{ B_r K-B_\ell}{q-q^{-1}}$ & $K^{\pm 1}$ &  $X^{\mp 1}$ 
\\
\end{tabular}
}
\bigskip

\noindent Let $\bf U$ denote the submodule of the $U_q(\widehat{\mathfrak{sl}}_2)$-module $\mathbb U$ that is generated by the vector $\bf 1$.
 Our second main result is that the $U_q(\widehat{\mathfrak{sl}}_2)$-modules $\bf U$ and $V(\Lambda_0)$ are isomorphic.
  Let $\bf V$ denote the intersection of  the kernel of $B^*_L$ and the kernel of $(A^*_L)^2$. The vector space ${\bf V}$ has a basis consisting of the
  words in $\mathbb V$ that do not begin with $y$ or $xx$. Note that the sum ${\bf V} = \mathbb F{\bf 1} + \mathbb F x + xy\mathbb V$ is direct.
 Our third main result is that ${\bf U} = \mathbb U \cap {\bf V}$.
 \medskip
 
 \noindent The paper is organized as follows. Section 2 contains some preliminaries.
 In Section 3 we recall the algebra $U^+_q$ and discuss its basic properties.
 In Section 4 we describe the free algebra $\mathbb V$.
 In Section 5 we describe the maps $X,Y,K$ in ${\rm End}(\mathbb V)$.
 In Section 6 we describe the maps $A^*_L, B^*_L, A^*_R, B^*_R$ in ${\rm End}(\mathbb V)$.
 In Section 7 we describe the $q$-shuffle algebra $\mathbb V$.
 In Section 8 we describe the maps $A_\ell, B_\ell, A_r, B_r$ in ${\rm End}(\mathbb V)$.
 In Section 9 we describe the subalgebra $\mathbb U$ of the $q$-shuffle algebra $\mathbb V$.
 In Sections 10, 11 we give our main results, which are
 Theorems \ref{thm:m1}, \ref{thm:3}, \ref{thm:m3}.
 In Section 12 we describe some variations on Theorem \ref{thm:m1}.
 In Appendix A we display some relations that are satisfied by the maps from the main body of the paper.
 In Appendix B we give a presentation of $U_q(\widehat{\mathfrak{sl}}_2)$. 
 In Appendix C we give a basis for some of the weight spaces of the $U_q(\widehat{\mathfrak{sl}}_2)$-module $\bf U$.
 In Appendix D we show how the $U_q(\widehat{\mathfrak{sl}}_2)$-generators act on the bases in Appendix C.
 In Appendix E we discuss a linear algebraic situation that comes up in Section 11.

  \section{Preliminaries} We now begin our formal argument. 
 Recall the natural numbers $\mathbb N = \lbrace 0,1,2,\ldots \rbrace$ and integers $\mathbb Z = \lbrace 0, \pm 1, \pm 2, \ldots \rbrace$.
 Let $\mathbb F$ denote a field with characteristic zero. Throughout this paper, every  vector space we discuss is understood to be over $\mathbb F$.
 Every algebra we discuss is understood to be associative, over $\mathbb F$, and have a multiplicative identity. A subalgebra has the same multiplicative identity
 as the parent algebra.
Let $\mathcal A$ denote an algebra.  An {\it automorphism} of $\mathcal A$ is an algebra isomorphism $\mathcal A \to \mathcal A$.
The {\it opposite algebra} $\mathcal A^{\rm opp}$ consists of the vector space $\mathcal A$ and the multiplication map
$\mathcal A \times \mathcal A \to \mathcal A$, $(a,b)\mapsto ba$.
An {\it antiautomorphism} of $\mathcal A$ is an algebra isomorphism $\mathcal A \to \mathcal A^{\rm opp}$. 
 \medskip
 
 \noindent We recall a few concepts from linear algebra.  Let $V$ denote a vector space, and consider an $\mathbb F$-linear map $T:V \to V$. The map $T$ is said to be {\it nilpotent}
whenever there exists a positive integer $n$ such that $T^n=0$. The map $T$ is said to be {\it locally nilpotent} whenever for all $v \in V$ there exists
a positive integer $n$ such that $T^n v=0$. If $T$ is nilpotent then $T$ is locally nilpotent. If $T$ is locally nilpotent and the dimension of $V$ is finite, then $T$ is nilpotent.
\medskip

   \noindent Throughout the paper, fix a nonzero $q \in \mathbb F$
that is not a root of unity.
Recall the notation
\begin{align*}
\lbrack n\rbrack_q = \frac{q^n-q^{-n}}{q-q^{-1}}
\qquad \qquad n \in \mathbb N.
\end{align*}

\section{The positive part of $U_q(\widehat{\mathfrak{sl}}_2)$}
\noindent Later in the paper, we will discuss the quantized enveloping algebra  $U_q(\widehat{\mathfrak{sl}}_2)$. For now, we consider
a subalgebra $U^+_q$  of $U_q(\widehat{\mathfrak{sl}}_2)$, called the positive part. Shortly  we will give a presentation of $U^+_q$ by generators and relations.
\medskip

\noindent For elements $\mathcal X, \mathcal Y$ in any algebra, define their
commutator and $q$-commutator by 
\begin{align*}
\lbrack \mathcal X, \mathcal Y \rbrack = \mathcal X\mathcal Y-\mathcal Y\mathcal X, \qquad \qquad
\lbrack \mathcal X, \mathcal Y \rbrack_q = q \mathcal X\mathcal Y- q^{-1}\mathcal Y\mathcal X.
\end{align*}
\noindent Note that 
\begin{align}
\label{eq:qs}
\lbrack \mathcal X, \lbrack \mathcal X, \lbrack \mathcal X, \mathcal Y\rbrack_q \rbrack_{q^{-1}} \rbrack
= 
\mathcal X^3\mathcal Y-\lbrack 3\rbrack_q \mathcal X^2 \mathcal Y \mathcal X+ 
\lbrack 3\rbrack_q \mathcal X\mathcal Y \mathcal X^2 -\mathcal Y \mathcal X^3.
\end{align}

\begin{definition} \label{def:U} \rm
(See \cite[Corollary~3.2.6]{lusztig}.) 
Define the algebra $U^+_q$ by generators $A$, $B$ and relations
\begin{align}
\label{eq:qOns1}
&\lbrack A, \lbrack A, \lbrack A, B\rbrack_q \rbrack_{q^{-1}} \rbrack =0,
\qquad \qquad
%%\label{eq:qOns2}
\lbrack B, \lbrack B, \lbrack B, A\rbrack_q \rbrack_{q^{-1}}\rbrack = 0.
\end{align}
We call $U^+_q$ the {\it positive part of $U_q(\widehat{\mathfrak{sl}}_2)$}.
%The generators $W_0, W_1$ are called {\it standard}.
The relations \eqref{eq:qOns1}  are called the {\it $q$-Serre relations}.
\end{definition}
\noindent We mention some symmetries of $U^+_q$. 

\begin{lemma}
\label{lem:aut} There exists an automorphism $\sigma$ of $U^+_q$ that sends $A \leftrightarrow B$.
Moreover $\sigma^2 = {\rm id}$, where ${\rm id}$ denotes the identity map.
\end{lemma}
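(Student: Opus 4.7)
The plan is to exploit the symmetry of the defining relations in Definition~\ref{def:U}. The two $q$-Serre relations
\[
\lbrack A, \lbrack A, \lbrack A, B\rbrack_q \rbrack_{q^{-1}} \rbrack =0, \qquad
\lbrack B, \lbrack B, \lbrack B, A\rbrack_q \rbrack_{q^{-1}}\rbrack = 0
\]
are interchanged by the involution $A \leftrightarrow B$ of the free algebra on $A,B$. Thus the proof is essentially a universal-property argument.

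First, I would invoke the presentation in Definition~\ref{def:U}: to specify an algebra homomorphism $\sigma \colon U^+_q \to U^+_q$ it is enough to name images for the generators $A,B$ and to check that the defining relations are satisfied by the images. Set $\sigma(A)=B$ and $\sigma(B)=A$. The image of the first relation under $\sigma$ is $\lbrack B, \lbrack B, \lbrack B, A\rbrack_q \rbrack_{q^{-1}}\rbrack$, which vanishes by the second relation in $U^+_q$; similarly the image of the second relation vanishes by the first. Hence $\sigma$ extends uniquely to an algebra homomorphism $U^+_q \to U^+_q$.

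Next, I would verify $\sigma^2 = {\rm id}$. On the generators, $\sigma^2(A) = \sigma(B) = A$ and $\sigma^2(B) = \sigma(A) = B$. Since $A,B$ generate $U^+_q$ as an algebra, the endomorphism $\sigma^2$ agrees with ${\rm id}$ on a generating set, so $\sigma^2 = {\rm id}$. In particular $\sigma$ is its own two-sided inverse, so $\sigma$ is bijective, and therefore an automorphism.

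There is no real obstacle here; the only thing to be careful about is invoking the universal property correctly, namely that a map on generators extends to an algebra homomorphism precisely when the relations are preserved. Both points are immediate from the symmetry of the relation set under $A \leftrightarrow B$.
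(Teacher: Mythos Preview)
Your argument is correct and is precisely the standard universal-property verification one would expect; the paper itself omits any proof of this lemma, treating it as routine. Your write-up supplies exactly the details the paper leaves implicit.
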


\begin{lemma}\label{lem:antiaut} {\rm (See \cite[Lemma~2.2]{catalan}.)}
There exists an antiautomorphism $\dagger$ of $U^+_q$ that fixes each of $A$, $B$.
 Moreover $\dagger^2={\rm id}$.
\end{lemma}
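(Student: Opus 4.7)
The plan is to invoke the universal property of the presentation in Definition~\ref{def:U}: an antiautomorphism of $U_q^+$ is exactly an algebra homomorphism from $U_q^+$ to its opposite algebra $(U_q^+)^{\mathrm{opp}}$, so it suffices to exhibit elements $A',B' \in (U_q^+)^{\mathrm{opp}}$ satisfying the $q$-Serre relations \eqref{eq:qOns1} and then set $A' = A$, $B' = B$.

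First I would use the expansion \eqref{eq:qs} to rewrite each of the defining relations \eqref{eq:qOns1} as
\begin{align*}
A^3 B - [3]_q A^2 B A + [3]_q A B A^2 - B A^3 &= 0, \\
B^3 A - [3]_q B^2 A B + [3]_q B A B^2 - A B^3 &= 0.
\end{align*}
The key observation is that each of these expressions is (up to an overall sign) invariant under reversing the order of the letters in every monomial: reading the first relation from right to left yields
\begin{equation*}
B A^3 - [3]_q A B A^2 + [3]_q A^2 B A - A^3 B,
\end{equation*}
which is the negative of the original, hence still zero. The same symmetry applies to the second relation. Translating this into the opposite algebra, where multiplication is reversed, the elements $A, B \in (U_q^+)^{\mathrm{opp}}$ satisfy precisely the same relations \eqref{eq:qOns1}, because forming the expression $[A, [A, [A,B]_q]_{q^{-1}}]$ in $(U_q^+)^{\mathrm{opp}}$ amounts, after tracking signs from $[\,\cdot\,,\,\cdot\,]^{\mathrm{opp}}_q$ and $[\,\cdot\,,\,\cdot\,]^{\mathrm{opp}}$, to the reversed monomial expansion in $U_q^+$, which vanishes by the observation above.

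By Definition~\ref{def:U} and the universal property of generators and relations, there therefore exists an algebra homomorphism $\dagger : U_q^+ \to (U_q^+)^{\mathrm{opp}}$ with $A \mapsto A$ and $B \mapsto B$. Since its image contains the generators $A, B$ of $(U_q^+)^{\mathrm{opp}}$, the map $\dagger$ is surjective, and a standard argument (for instance, constructing the inverse by the same universal property applied in the opposite direction) shows it is an isomorphism; by definition this is an antiautomorphism of $U_q^+$ fixing $A$ and $B$. For the final assertion, $\dagger^2$ is an automorphism of $U_q^+$ fixing the generators $A, B$, so by the universal property $\dagger^2 = \mathrm{id}$.

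The only nontrivial point is the symmetry observation in the first step; once the expansion \eqref{eq:qs} is in hand this is immediate, so I do not anticipate any real obstacle.
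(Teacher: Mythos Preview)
Your proof is correct and follows the standard route: verify that the $q$-Serre relations are invariant (up to sign) under monomial reversal via the expansion \eqref{eq:qs}, then invoke the universal property of the presentation. The paper itself does not supply a proof of this lemma but simply cites \cite[Lemma~2.2]{catalan}; your argument is exactly the one that citation records, so there is nothing further to add.
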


\begin{lemma} {\rm (See \cite[Lemma~3.4]{factorUq}.)} The maps $\sigma$, $\dagger$ commute.
\end{lemma}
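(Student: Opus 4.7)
The plan is to compare $\sigma\circ\dagger$ and $\dagger\circ\sigma$ on the algebra generators $A,B$ of $U^+_q$, after first checking that both compositions are antihomomorphisms of $U^+_q$. Since $\sigma$ is an algebra homomorphism and $\dagger$ is an algebra antihomomorphism, the composite $\sigma\circ\dagger$ satisfies
\[
(\sigma\circ\dagger)(\mathcal{X}\mathcal{Y})=\sigma\bigl(\dagger(\mathcal{Y})\dagger(\mathcal{X})\bigr)=\sigma(\dagger(\mathcal{Y}))\,\sigma(\dagger(\mathcal{X}))=(\sigma\circ\dagger)(\mathcal{Y})\,(\sigma\circ\dagger)(\mathcal{X}),
\]
so it reverses products; the same calculation shows $\dagger\circ\sigma$ is an antihomomorphism. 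Both are bijective, hence antiautomorphisms of $U^+_q$.

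Next I would invoke the general principle that an antihomomorphism $U^+_q\to U^+_q$ is determined by its values on any generating set, since every element of $U^+_q$ is a linear combination of monomials in $A,B$ and an antihomomorphism sends such a monomial to the reversed product of the images of its letters. Thus to prove $\sigma\circ\dagger=\dagger\circ\sigma$ it is enough to verify agreement on $A$ and on $B$.

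Finally, I would do the two short computations directly from Lemma \ref{lem:aut} and Lemma \ref{lem:antiaut}:
\[
(\sigma\circ\dagger)(A)=\sigma(A)=B=\dagger(B)=(\dagger\circ\sigma)(A),
\]
\[
(\sigma\circ\dagger)(B)=\sigma(B)=A=\dagger(A)=(\dagger\circ\sigma)(B).
\]
Since the two antiautomorphisms agree on the generating set $\{A,B\}$, they coincide on all of $U^+_q$, proving the lemma.

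There is no real obstacle here; the only conceptual point that needs care is recognizing that a composition homomorphism$\,\circ\,$antihomomorphism (in either order) is again an antihomomorphism, so that the generator check is legitimate.
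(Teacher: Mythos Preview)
Your argument is correct: both compositions are antiautomorphisms, and they agree on the generating set $\{A,B\}$, hence on all of $U^+_q$. The paper itself gives no proof here---it simply cites \cite[Lemma~3.4]{factorUq}---so you have supplied the standard direct verification, which is exactly what one would expect that reference to contain.
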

 
\begin{definition}\label{def:tauA} \rm Let $\tau$ denote the composition of $\sigma$ and $\dagger$. Note that $\tau$ is an antiautomorphism of $U^+_q$ that sends
$A \leftrightarrow B$. We have $\tau^2 = {\rm id}$.
\end{definition}

\noindent Next we describe a grading for the algebra $U^+_q$.
The $q$-Serre relations are homogeneous in both
$A$ and $B$. Therefore,
the algebra $U^+_q$ has a ${\mathbb N}^2$-grading 
for which $A$ and $B$ are homogeneous,
with degrees $(1,0)$ and $(0,1)$ respectively. For $(r,s)\in \mathbb N^2$ let $U^+_q(r,s)$ denote the $(r,s)$-homogeneous component of the grading. The 
dimension of $U^+_q(r,s)$ is described by a generating function, as we now discuss.  Let $t$ and $u$ denote commuting indeterminates.

\begin{definition}\label{def:RS} \rm Define the generating function
\begin{align*}
\Phi(t,u) = 
\prod_{n=1}^\infty 
\frac{1}{1-t^n u^{n-1}}\,
\frac{1}{1-t^{n} u^{n}}\,
\frac{1}{1-t^{n-1} u^{n}}.
\end{align*}
\noindent Using $(1-z)^{-1}= 1+ z+ z^2 + \cdots$ we expand the above generating function as a power series:
\begin{align*}
\Phi(t,u) = \sum_{(r,s) \in \mathbb N^2} d_{r,s} t^r u^s, \qquad \qquad d_{r,s} \in \mathbb N.
\end{align*}
For notational convenience, define $d_{r,-1}=0$ and $d_{-1,s}=0$ for $r,s\in \mathbb N$.
\end{definition}

%%%\noindent We have $\Phi(t,u)= \Phi(u,t)$. Moreover $d_{r,s} = d_{s,r}$ for $(r,s) \in \mathbb N^2$.
\begin{example} 
\label{ex:tabledij}
\rm (See \cite[Example~3.4]{alternating}.)
For $0 \leq r,s\leq 6$ we display $d_{r,s}$
in the $(r,s)$-entry of the matrix below:
\begin{align*}
	\left(
         \begin{array}{ccccccc}
               1&1&1&1&1&1&1 
	       \\
               1&2&3&3&3&3&3 
               \\
	       1&3&6&8&9&9&9 
              \\
	      1&3&8&14&19&21&22 
             \\
	     1&3&9&19&32&42&48 
            \\
	    1&3&9&21&42&66&87
	    \\
	    1&3&9&22&48&87&134
                  \end{array}
\right)
\end{align*}
\end{example}
\noindent We have $\Phi(t,u)= \Phi(u,t)$. Moreover $d_{r,s} = d_{s,r}$ for $(r,s) \in \mathbb N^2$.

\begin{lemma} \label{lem:gfRS}{\rm (See \cite[Definition~3.2, Corollary~3.7]{alternating}.)} 
For $(r,s) \in \mathbb N^2$ we have
\begin{align*}
d_{r,s}= {\rm dim} \,U^+_q(r,s).
\end{align*}
\end{lemma}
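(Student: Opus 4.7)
My proof plan is to construct a Poincaré-Birkhoff-Witt basis of $U^+_q$ whose generators have explicit bidegrees, then read off the bigraded Hilbert series as an infinite product. Since $U^+_q$ is the positive part of a quantum affine algebra of type $A_1^{(1)}$, the Damiani PBW theorem \cite{damiani} gives a basis of $U^+_q$ consisting of ordered monomials in countably many root vectors indexed by the positive roots of $\widehat{\mathfrak{sl}}_2$. The real positive roots $n\delta \pm \alpha_1$ each contribute a single root vector, while each imaginary positive root $n\delta$ contributes exactly one root vector, since the underlying finite-dimensional Lie algebra $\mathfrak{sl}_2$ has rank one and the imaginary root spaces of $\widehat{\mathfrak{sl}}_2$ all have dimension one.

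Next I would identify the bidegrees of these generators in the $\mathbb{N}^2$-grading of $U^+_q$. Using $\delta = \alpha_0 + \alpha_1$ and the identification $A \leftrightarrow \alpha_0$, $B \leftrightarrow \alpha_1$, the root $n\delta - \alpha_1 = n\alpha_0 + (n-1)\alpha_1$ has bidegree $(n, n-1)$ for $n \geq 1$; the root $n\delta + \alpha_1 = n\alpha_0 + (n+1)\alpha_1$ has bidegree $(n, n+1)$ for $n \geq 0$, which after reindexing reads $(n-1, n)$ for $n \geq 1$; and the imaginary root $n\delta$ has bidegree $(n, n)$ for $n \geq 1$. These three families exhaust the positive roots.

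I would then invoke the standard combinatorial principle: if an $\mathbb{N}^2$-graded algebra has a basis of ordered monomials in generators of bidegrees $(a_i, b_i)_{i \in I}$, then its bigraded Hilbert series equals $\prod_{i \in I} (1 - t^{a_i} u^{b_i})^{-1}$, since choosing exponents for each generator independently is equivalent to expanding each geometric series. Substituting the three families of bidegrees identified above yields exactly the product defining $\Phi(t,u)$ in Definition \ref{def:RS}, giving $d_{r,s} = \dim U^+_q(r,s)$ for all $(r,s) \in \mathbb{N}^2$.

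The main obstacle is the careful invocation of the PBW theorem together with its bidegree data. In particular, the Damiani real root vectors do not mutually commute, so one must fix a linear order on the generators and check that ordered monomials in that order form a basis; this is exactly the content of Damiani's theorem. For the dimension count only the multiset of bidegrees matters, so the product formula applies unchanged. Alternatively, one could use the Beck PBW basis \cite{beck} or the alternating PBW basis of \cite{alternating}; all three yield identical Hilbert series. Since the lemma only concerns dimensions, no additional work is needed to reconcile the three bases.
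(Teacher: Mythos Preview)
Your argument is correct: the Damiani PBW basis \cite{damiani} provides root vectors indexed by the positive roots of $\widehat{\mathfrak{sl}}_2$, and the bidegrees $(n,n-1)$, $(n,n)$, $(n-1,n)$ for $n\geq 1$ that you identify give exactly the product $\Phi(t,u)$ of Definition~\ref{def:RS}. The paper does not supply its own proof here; it simply cites \cite[Definition~3.2, Corollary~3.7]{alternating}, where the dimension formula is obtained by the same PBW-counting mechanism you describe. Your remark that any of the Damiani, Beck, or alternating PBW bases would serve equally well is apt, since only the multiset of generator bidegrees enters the Hilbert-series computation.
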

\noindent 
Our next goal is to show that $d_{r, s-1}\leq d_{r,s} $ and $d_{r-1,s} \leq d_{r,s}$ for $(r,s)\in \mathbb N^2$. To reach the goal,
we modify the generating function $\Phi(t,u)$ in the following way.
\begin{definition}\label{def:De1} \rm Define the generating function
\begin{align}
\label{eq:DeForm}
\Delta(t,u)= 
\prod_{n=1}^\infty 
\frac{1}{1-t^n u^{n-1}}\,
\frac{1}{1-t^{n} u^{n}}\,
\frac{1}{1-t^{n} u^{n+1}}.
\end{align}
\end{definition}
\begin{lemma}\label{lem:De2} We have $\Delta(t,u) = \Phi(t,u)(1-u)$ and $\Delta(u,t) = \Phi(t,u)(1-t)$. Moreover
\begin{align*}
\Delta(t,u) = \sum_{(r,s) \in \mathbb N^2} \bigl( d_{r,s} - d_{r,s-1}\bigr) t^r u^s, 
\qquad \quad 
\Delta(u,t) = \sum_{(r,s) \in \mathbb N^2} \bigl( d_{r,s} - d_{r-1,s}\bigr) t^r u^s.
\end{align*}
\end{lemma}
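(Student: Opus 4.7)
The plan is to verify the two product identities $\Delta(t,u) = \Phi(t,u)(1-u)$ and $\Delta(u,t) = \Phi(t,u)(1-t)$ by direct manipulation of the infinite products, and then read off the power series expansions using these two identities.

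For the first identity, I would compare the three infinite products appearing in $\Phi(t,u)$ and $\Delta(t,u)$ factor by factor. The first two factors in each product are identical, so the claim reduces to showing
\[
\prod_{n=1}^\infty \frac{1}{1-t^n u^{n+1}} = (1-u)\prod_{n=1}^\infty \frac{1}{1-t^{n-1} u^n}.
\]
This is an index shift: reindex the left-hand product by $m=n+1$ to obtain $\prod_{m=2}^\infty \frac{1}{1-t^{m-1}u^m}$, which differs from the right-hand product of $\Phi$ only by the missing $m=1$ factor $\frac{1}{1-u}$; multiplying by $(1-u)$ restores it. For the second identity, I would apply the same idea after swapping $t \leftrightarrow u$ in $\Delta(t,u)$: the swapped product has first factor $\prod_{n=1}^\infty \frac{1}{1-t^{n-1}u^n}$, which agrees with the third factor of $\Phi(t,u)$; the middle factor is unchanged and still matches; and the third factor $\prod_{n=1}^\infty \frac{1}{1-t^{n+1}u^n}$ is, by the analogous index shift, equal to $(1-t)\prod_{n=1}^\infty \frac{1}{1-t^n u^{n-1}}$, the first factor of $\Phi(t,u)$ multiplied by $(1-t)$.

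Having established the two product identities, the coefficient formulas follow from a one-line calculation. From $\Delta(t,u) = \Phi(t,u) - u\,\Phi(t,u)$ and the expansion $\Phi(t,u) = \sum_{(r,s)\in \mathbb N^2} d_{r,s} t^r u^s$, we get
\[
u\,\Phi(t,u) = \sum_{(r,s)\in \mathbb N^2} d_{r,s} t^r u^{s+1} = \sum_{(r,s)\in \mathbb N^2} d_{r,s-1} t^r u^s,
\]
using the convention $d_{r,-1}=0$ from Definition \ref{def:RS}, which yields the claimed expression for $\Delta(t,u)$. The analogous calculation with $t\,\Phi(t,u)$ and the convention $d_{-1,s}=0$ gives the expression for $\Delta(u,t)$.

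There is essentially no obstacle here: the argument is a bookkeeping exercise in infinite products together with a reindexing. The only point requiring a little care is lining up the three factors in $\Delta(u,t)$ against the three factors of $\Phi(t,u)$ after the variable swap, and making sure the $n=1$ factor being inserted or deleted really is the one claimed; the rest is automatic.
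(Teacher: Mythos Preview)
Your proposal is correct and follows exactly the approach the paper intends: the paper's proof is simply ``Use Definitions \ref{def:RS}, \ref{def:De1},'' and you have spelled out precisely the index shift and coefficient reindexing that this entails. One minor shortcut you could have taken for the second identity is to invoke the symmetry $\Phi(t,u)=\Phi(u,t)$ noted just above Lemma \ref{lem:gfRS}, so that $\Delta(u,t)=\Phi(u,t)(1-t)=\Phi(t,u)(1-t)$ follows immediately from the first identity with the variables swapped.
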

\begin{proof} Use Definitions \ref{def:RS}, \ref{def:De1}.
\end{proof}
\begin{lemma} \label{lem:De3} For $(r,s)\in \mathbb N^2$ we have
$d_{r,s-1} \leq d_{r,s}$ and
$d_{r-1,s} \leq d_{r,s}$.
\end{lemma}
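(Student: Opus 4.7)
The plan is to deduce both inequalities at once by showing that every coefficient in the power series expansion of $\Delta(t,u)$ is non-negative, and then appeal to Lemma \ref{lem:De2}.

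First I would rewrite $\Delta(t,u)$ using its definition \eqref{eq:DeForm} as an infinite product of factors of the form $(1-t^a u^b)^{-1}$ with $(a,b) \in \{(n, n-1),\, (n,n),\, (n, n+1)\}$ for $n \geq 1$. In particular, every exponent pair $(a,b)$ that appears satisfies $a \geq 1$ and $b \geq 0$. Using the geometric series expansion
\begin{align*}
\frac{1}{1-t^a u^b} = \sum_{k=0}^\infty t^{ak} u^{bk},
\end{align*}
each factor is a formal power series in $t,u$ with non-negative integer coefficients. Since the set of power series in $t,u$ with non-negative coefficients is closed under multiplication (and the infinite product is well-defined because the minimum total degree of terms contributed by the $n$-th triple of factors grows with $n$), the product $\Delta(t,u)$ itself has non-negative coefficients. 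By symmetry (swapping $t$ and $u$), the series $\Delta(u,t)$ also has non-negative coefficients.

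Next I would combine this observation with Lemma \ref{lem:De2}. That lemma identifies the coefficient of $t^r u^s$ in $\Delta(t,u)$ as $d_{r,s} - d_{r,s-1}$, and the coefficient of $t^r u^s$ in $\Delta(u,t)$ as $d_{r,s} - d_{r-1,s}$. The non-negativity of these coefficients therefore gives $d_{r,s-1} \leq d_{r,s}$ and $d_{r-1,s} \leq d_{r,s}$ for every $(r,s) \in \mathbb{N}^2$, which is the assertion.

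There is no real obstacle here; the main content is the observation that the factor $(1-u)^{-1}$ hidden inside $\Delta(t,u) = \Phi(t,u)(1-u)^{-1}\cdot(1-u)$ — more precisely, the reorganization of the product \eqref{eq:DeForm} so that each factor is manifestly of the form $(1-t^a u^b)^{-1}$ with $a,b \geq 0$ — suffices to ensure coefficient positivity. Everything else is bookkeeping with generating functions and an appeal to the already proved Lemma \ref{lem:De2}.
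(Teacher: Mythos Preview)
Your proposal is correct and follows essentially the same approach as the paper: expand \eqref{eq:DeForm} as a product of geometric series to see that $\Delta(t,u)$ (and hence $\Delta(u,t)$) has nonnegative coefficients, then invoke Lemma~\ref{lem:De2}. Your third paragraph is muddled --- the equation $\Delta(t,u) = \Phi(t,u)(1-u)^{-1}\cdot(1-u)$ is not what you mean, and there is no $(1-u)^{-1}$ factor ``hidden inside'' $\Delta$ (that factor is precisely what gets cancelled when passing from $\Phi$ to $\Delta$) --- but this is expository clutter that does not affect the correctness of the argument in your first two paragraphs.
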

\begin{proof} Expand the right-hand side of \eqref{eq:DeForm} as a power series. In this power series, the coefficient of $t^r u^s$ is nonnegative for $(r,s) \in \mathbb N^2$. The result follows in view of
Lemma \ref{lem:De2}.
\end{proof}
\noindent Our next general goal is to compute ${\rm max}\lbrace d_{r,s} | s \in \mathbb N\rbrace $ for $r \in \mathbb N$, and
${\rm max}\lbrace d_{r,s} | r \in \mathbb N\rbrace $ for $s \in \mathbb N$. To reach the goal, we will use the concept of a partition.
\medskip

\noindent For $n \in \mathbb N$, a {\it partition of $n$} is a sequence $\lambda = \lbrace \lambda_i \rbrace_{i=1}^\infty$ of natural numbers such that $\lambda_i \geq \lambda_{i+1}$ for $i\geq 1$ and $n = \sum_{i=1}^\infty \lambda_i$.
Let $p_n$ denote the number of partitions of $n$. For example,
\bigskip
 
 \centerline{
 \begin{tabular}{c|ccccccc}
$n$ &  $0$ & $1$ & $2$ & $3$ &$4$ &$5$ &$6$
\\
\hline
$p_n$ & $1$ &$1$ &$2$&  $3$& $5$& $7$ &  $11$
\end{tabular}
}
\bigskip

\noindent Define the generating function for partitions:
\begin{align}
\label{eq:gfp}
 p(t) = \sum_{n \in \mathbb N} p_n t^n.
 \end{align}
\noindent
 The following result is well known; see for example \cite[Theorem~8.3.4]{bruIntro}.
\begin{align}
p(t) = \prod_{n=1}^\infty \frac{1}{ 1-t^n}.
\label{eq:WK}
\end{align}

\noindent We expand the generating function $\bigl(p(t)\bigr)^3$ as a power series:
\begin{align}
\label{eq:p3}
\bigl(p(t)\bigr)^3=\sum_{n \in \mathbb N} \mu_n t^n, \qquad \qquad \mu_n \in \mathbb N.
\end{align}
Consider the coefficients $\lbrace \mu_n \rbrace_{n \in \mathbb N}$. For example,
\bigskip

 \centerline{
 \begin{tabular}{c|ccccccc}
$n$ &  $0$ & $1$ & $2$ & $3$ &$4$ &$5$ &$6$
\\
\hline
$\mu_n$ & $1$ &$3$ &$9$&  $22$& $51$& $108$ &  $221$
\end{tabular}
}

\begin{proposition} \label{lem:max} For $r \in \mathbb N$ we have 
\begin{align} \label{eq:mS}
\mu_r = {\rm max}\lbrace d_{r,s} | s \in \mathbb N\rbrace.
\end{align}
For $s \in \mathbb N$ we have
\begin{align}\label{eq:mR}
\mu_s = {\rm max}\lbrace d_{r,s} | r \in \mathbb N\rbrace.
\end{align}
\end{proposition}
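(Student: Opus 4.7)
The plan is to evaluate $\Delta(t,u)$ at $u=1$ and compare the resulting generating function (which I expect to be $p(t)^3$) with the telescoping formula from Lemma \ref{lem:De2}. This will identify $\max_s d_{r,s}$ with $\mu_r$.

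First I would use Lemma \ref{lem:De3}, which tells us that for fixed $r$ the sequence $(d_{r,s})_{s\in\mathbb N}$ is non-decreasing, so the maximum in \eqref{eq:mS} equals the telescoping sum
\[
\max_{s\in\mathbb N} d_{r,s} \;=\; \sum_{s=0}^{\infty}\bigl(d_{r,s}-d_{r,s-1}\bigr),
\]
using the convention $d_{r,-1}=0$ from Definition \ref{def:RS}. By Lemma \ref{lem:De2}, this sum is precisely the value at $u=1$ of $f_r(u):=[t^r]\,\Delta(t,u)$. Next I would evaluate $\Delta(t,1)$ directly from the product form \eqref{eq:DeForm}: substituting $u=1$ collapses all three factor families to $1/(1-t^n)$, giving
\[
\Delta(t,1)=\prod_{n=1}^{\infty}\frac{1}{(1-t^n)^3}=\bigl(p(t)\bigr)^3=\sum_{n\in\mathbb N}\mu_n t^n,
\]
by \eqref{eq:WK} and \eqref{eq:p3}. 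Comparing coefficients of $t^r$ yields $f_r(1)=\mu_r$, and combined with the telescoping identity this gives \eqref{eq:mS}.

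The only subtle point, and what I expect to be the main obstacle, is justifying that the substitution $u=1$ is legitimate, i.e.\ that $f_r(u)$ is a genuine polynomial in $u$ rather than only a formal power series. The reason is that every factor in \eqref{eq:DeForm} has the shape $1/(1-t^n u^{n+\varepsilon})$ with $n\ge 1$ and $\varepsilon\in\{-1,0,1\}$, so only those factors with $n\le r$ contribute any nonconstant term to the coefficient of $t^r$, and among those only finitely many $k$-fold choices in the expansion $\sum_k (t^nu^{n+\varepsilon})^k$ keep the $t$-degree at most $r$. Hence $f_r(u)$ is a polynomial in $u$ with non-negative coefficients (as already confirmed by Lemma \ref{lem:De3}), so setting $u=1$ is unambiguous and coincides with the sum of its coefficients.

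Finally, for \eqref{eq:mR} I would invoke the symmetry $\Phi(t,u)=\Phi(u,t)$ noted just before Lemma \ref{lem:gfRS}, which gives $d_{r,s}=d_{s,r}$ for all $(r,s)\in\mathbb N^2$, so \eqref{eq:mR} reduces immediately to \eqref{eq:mS}. Alternatively, one could repeat the same argument using the second identity of Lemma \ref{lem:De2}, namely $\Delta(u,t)=\sum(d_{r,s}-d_{r-1,s})t^ru^s$, and evaluate $\Delta(1,u)$ from the product form to obtain $p(u)^3=\sum_n\mu_n u^n$ as before.
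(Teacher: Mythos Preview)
Your proof is correct and follows essentially the same route as the paper: telescope the non-decreasing sequence via Lemma~\ref{lem:De3}, identify the resulting sum as the coefficient of $t^r$ in $\Delta(t,1)$ using Lemma~\ref{lem:De2}, evaluate $\Delta(t,1)=\bigl(p(t)\bigr)^3$ from the product form, and then invoke the symmetry $d_{r,s}=d_{s,r}$ for \eqref{eq:mR}. Your added justification that $f_r(u)$ is a polynomial in $u$ (so that the substitution $u=1$ is legitimate) is a point the paper passes over silently, so your version is if anything slightly more careful.
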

\begin{proof} First, for $r \in \mathbb N$ we verify \eqref{eq:mS}. Let $\mu'_r$ denote right-hand side of \eqref{eq:mS}. We show that $\mu_r =\mu'_r$.
By Lemma  \ref{lem:De3}, we may view
\begin{align*} 
\mu'_r = \sum_{s \in \mathbb N} (d_{r,s}-d_{r,s-1}).
\end{align*}
By this and Lemma \ref{lem:De2},
\begin{align}
 \Delta(t,1) = \sum_{(r,s)\in \mathbb N^2} \bigl(d_{r,s}-d_{r,s-1}\bigr) t^r =
\sum_{r \in \mathbb N} \mu'_r t^r.
\label{eq:mup}
\end{align}
Set $u=1$ in \eqref{eq:DeForm}, and evaluate the result using \eqref{eq:WK}, \eqref{eq:p3}. This yields
\begin{align}
\label{eq:mup2}
\Delta(t,1)  = \prod_{n=1}^\infty \frac{1}{(1-t^n)^3} = \bigl( p(t) \bigr)^3 = \sum_{r \in \mathbb N} \mu_r t^r.
\end{align}
Comparing \eqref{eq:mup}, \eqref{eq:mup2} we obtain $\mu_r =\mu'_r$ for $r \in \mathbb N$. We have verified \eqref{eq:mS}. 
The second assertion in the proposition statement follows from the first assertion in the proposition statement and the comment above  Lemma \ref{lem:gfRS}.
\end{proof}

\noindent Our next general goal is to embed $U^+_q$ into a $q$-shuffle algebra. For this $q$-shuffle algebra the underlying vector space is a free algebra on two generators.
This free algebra is described in the next section.
%\noindent For the moment abbreviate $U^+= U^+_q$. Since the $q$-Serre relations are homogeneous, the algebra $U^+$ has a grading
%$\lbrace U^+_n \rbrace_{n \in \mathbb N}$ with the following property: for $n \in \mathbb N$ the subspace $U^+_n$ is spanned by the products $g_1 g_2 \cdots g_n$
%such that $g_i$ is among $A$, $B$ for $1 \leq i \leq n$. In particular $U^+_0 = \mathbb F 1$ and $U^+_1$ is spanned by $A, B$.
%\noindent We will be discussing automorphisms and antiautomorphisms.
 %For an algebra $\mathcal A$, an {\it automorphism} of $\mathcal A$ is an algebra isomorphism $\mathcal A \to \mathcal A$.
%The {\it opposite algebra} $\mathcal A^{\rm opp}$ consists of the vector space $\mathcal A$ and the multiplication map
%$\mathcal A \times \mathcal A \to \mathcal A$, $(a,b)\mapsto ba$.
%An {\it antiautomorphism} of $\mathcal A$ is an algebra isomorphism $\mathcal A \to \mathcal A^{\rm opp}$.
%\begin{lemma} 
%\label{lem:AAut}
%There exists a unique automorphism of $U^+_q$ that swaps $W_0, W_1$. Moreover, there exists a unique
%antiautomorphism of $U^+_q$ that fixes each of $W_0, W_1$. These two maps commute.
%\end{lemma}
%\begin{definition} \label{def:sig}
%\rm Let $\xi$ denote the composition of the two maps from Lemma
%\ref{lem:AAut}.  The map $\xi$ is an antiautomorphism of $U^+_q$ that swaps $W_0, W_1$.
%\end{definition}

 \section{The free algebra $\mathbb V$}

 \noindent Let $x$, $y$ denote noncommuting indeterminates. Let $\mathbb V$ denote the free algebra with generators $x,y$.
By a {\it letter} in $\mathbb V$ we mean $x$ or $y$. For $n \in \mathbb N$,  a {\it word of length $n$}  in $\mathbb V$ is a product of
letters $\ell_1 \ell_2 \cdots \ell_n$. We interpret the word of length 0 to be the multiplicative identity in $\mathbb V$; this word is called {\it trivial} and denoted by $\bf 1$.
%The vector space $\mathbb V$ has a basis consisting of its words.
The vector space $\mathbb V$ has a basis consisting of its words; this basis is called {\it standard}.
 \medskip
 
\noindent We mention some symmetries of the free algebra $\mathbb V$. For the next four lemmas, the proofs are routine and omitted.
\begin{lemma}
\label{lem:faut} There exists an automorphism $\sigma$ of the free algebra $\mathbb V$ that sends $x \leftrightarrow y$.
Moreover $\sigma^2 = {\rm id}$.
\end{lemma}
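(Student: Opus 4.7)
The plan is to invoke the universal property of the free algebra $\mathbb V$ on generators $x,y$: any set map from $\lbrace x,y\rbrace$ into an algebra $\mathcal A$ extends uniquely to an algebra homomorphism $\mathbb V\to \mathcal A$. First I would apply this to the assignment $x\mapsto y$, $y\mapsto x$ (with $\mathcal A=\mathbb V$) to obtain an algebra homomorphism $\sigma:\mathbb V\to\mathbb V$ that swaps $x$ and $y$. Explicitly, $\sigma$ acts on the standard basis by sending a word $\ell_1\ell_2\cdots\ell_n$ to the word obtained by swapping each $x$ with $y$ and each $y$ with $x$.

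Next I would show $\sigma^2=\mathrm{id}$. The composition $\sigma^2$ is again an algebra endomorphism of $\mathbb V$, and it fixes each of the generators $x$ and $y$ (since $\sigma(\sigma(x))=\sigma(y)=x$, and similarly for $y$). By the uniqueness clause of the universal property, applied to the inclusion $\lbrace x,y\rbrace \hookrightarrow \mathbb V$, the only algebra endomorphism of $\mathbb V$ fixing $x,y$ is the identity; hence $\sigma^2=\mathrm{id}$.

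Finally, $\sigma^2=\mathrm{id}$ shows that $\sigma$ is its own two-sided inverse, so $\sigma$ is bijective, and therefore an automorphism of $\mathbb V$. There is essentially no obstacle here; the entire argument is a one-line invocation of the universal property plus the observation that endomorphisms of $\mathbb V$ are determined by their action on $x$ and $y$. If one preferred a more hands-on verification, one could instead define $\sigma$ on the standard basis by the letter-swap described above, extend by linearity, and directly check multiplicativity on basis elements (concatenation of words maps to concatenation after swapping letters); the involution property is then immediate from the fact that swapping letters twice restores the original word.
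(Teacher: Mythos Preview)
Your proof is correct. The paper does not give a proof of this lemma at all; it states that the proofs of this and the next three lemmas are routine and omitted, and your argument via the universal property of the free algebra is exactly the standard verification one has in mind.
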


\begin{lemma}\label{lem:fantiaut} 
There exists an antiautomorphism $\dagger$ of the free algebra $\mathbb V$ that fixes each of $x$, $y$.
 Moreover $\dagger^2={\rm id}$.
\end{lemma}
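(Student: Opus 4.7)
The plan is to use the universal property of the free algebra $\mathbb V$ on generators $x,y$. Recall that for any algebra $\mathcal A$ and any choice of images $f(x), f(y) \in \mathcal A$, there exists a unique algebra homomorphism $\mathbb V \to \mathcal A$ sending $x \mapsto f(x)$ and $y \mapsto f(y)$. I will apply this twice.

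First I would apply the universal property with $\mathcal A = \mathbb V^{\rm opp}$ and with $f(x) = x$, $f(y) = y$. This produces a unique algebra homomorphism $\dagger : \mathbb V \to \mathbb V^{\rm opp}$ fixing $x$ and $y$. By the definition of the opposite algebra, a homomorphism $\mathbb V \to \mathbb V^{\rm opp}$ is precisely an antihomomorphism $\mathbb V \to \mathbb V$; concretely, on the standard basis of words it acts by reversal, $\dagger(\ell_1 \ell_2 \cdots \ell_n) = \ell_n \cdots \ell_2 \ell_1$. It remains to show $\dagger$ is bijective, which will follow from $\dagger^2 = {\rm id}$.

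Next I would observe that the composition of two antihomomorphisms is a homomorphism, so $\dagger \circ \dagger$ is an algebra homomorphism $\mathbb V \to \mathbb V$. Since $\dagger$ fixes $x$ and $y$, so does $\dagger^2$. By the uniqueness clause in the universal property (applied now with $\mathcal A = \mathbb V$ and $f(x)=x$, $f(y)=y$), the only such homomorphism is ${\rm id}$. Hence $\dagger^2 = {\rm id}$, which forces $\dagger$ to be a bijection and therefore an antiautomorphism.

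There is no real obstacle here; the only thing to be careful about is the bookkeeping that a homomorphism into $\mathbb V^{\rm opp}$ coincides with an antihomomorphism of $\mathbb V$, and that the composition of two antihomomorphisms is a (genuine) homomorphism, so that the universal property can be invoked a second time to conclude $\dagger^2 = {\rm id}$.
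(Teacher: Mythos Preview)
Your argument is correct. The paper itself omits the proof entirely, stating just before this lemma that ``the proofs are routine and omitted,'' so your use of the universal property of the free algebra is exactly the kind of routine verification the author had in mind.
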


\begin{lemma} The map $\sigma$ from Lemma \ref{lem:faut} commutes with the map $\dagger$ from Lemma \ref{lem:fantiaut}.
\end{lemma}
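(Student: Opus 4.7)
The plan is to verify the identity $\sigma\dagger = \dagger\sigma$ by reducing it to a check on the generators $x,y$ of the free algebra $\mathbb V$, and then carrying out that check directly.

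First I would observe that both compositions $\sigma\dagger$ and $\dagger\sigma$ are antiautomorphisms of $\mathbb V$: composing an automorphism with an antiautomorphism (in either order) reverses the order of multiplication exactly once, so the composite is an antiautomorphism. Since $\mathbb V$ is the free algebra on $x,y$, any antiautomorphism is determined by its values on $x$ and $y$. Hence it suffices to check $(\sigma\dagger)(x) = (\dagger\sigma)(x)$ and $(\sigma\dagger)(y) = (\dagger\sigma)(y)$.

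Using Lemmas \ref{lem:faut} and \ref{lem:fantiaut}, we have $\sigma(x)=y$, $\sigma(y)=x$, $\dagger(x)=x$, $\dagger(y)=y$. Therefore
\begin{align*}
(\sigma\dagger)(x) &= \sigma(x) = y = \dagger(y) = (\dagger\sigma)(x),\\
(\sigma\dagger)(y) &= \sigma(y) = x = \dagger(x) = (\dagger\sigma)(y).
\end{align*}
Since the two antiautomorphisms agree on the generating set $\{x,y\}$, they coincide on all of $\mathbb V$.

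Alternatively, and perhaps more transparently, one could prove the lemma by a direct computation on the standard basis: for a word $w = \ell_1 \ell_2 \cdots \ell_n$ in $\mathbb V$, both $(\sigma\dagger)(w)$ and $(\dagger\sigma)(w)$ equal $\sigma(\ell_n)\sigma(\ell_{n-1})\cdots\sigma(\ell_1)$, and by linearity the maps agree on all of $\mathbb V$. There is no real obstacle here; the only thing to be careful about is the bookkeeping of orientation (antiautomorphism reverses order, automorphism preserves it), and the argument is essentially forced once one notices that both compositions are antiautomorphisms determined by their values on two generators.
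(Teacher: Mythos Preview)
Your proof is correct. The paper itself omits the proof as routine (it groups this lemma with three others and says ``the proofs are routine and omitted''), so your argument via antiautomorphisms agreeing on generators is exactly the kind of verification the paper has in mind.
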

 
\begin{lemma}\label{def:ftauA} There exists an antiautomorphism $\tau$ of the free algebra $\mathbb V$ that sends
$x \leftrightarrow y$. The map $\tau$ is the composition the map $\sigma$ from  Lemma \ref{lem:faut}
 and the map $\dagger$ from Lemma \ref{lem:fantiaut}. 
We have $\tau^2 = {\rm id}$. 
\end{lemma}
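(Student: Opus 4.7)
The plan is to define $\tau$ by the prescribed composition and then check, one by one, each of the three assertions in the statement. Concretely, set $\tau = \sigma \circ \dagger$, where $\sigma$ is the automorphism of $\mathbb V$ from Lemma \ref{lem:faut} and $\dagger$ is the antiautomorphism of $\mathbb V$ from Lemma \ref{lem:fantiaut}.

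First I would verify that $\tau$ is an antiautomorphism of $\mathbb V$. Since $\sigma$ is an algebra isomorphism $\mathbb V \to \mathbb V$ and $\dagger$ is an algebra isomorphism $\mathbb V \to \mathbb V^{\mathrm{opp}}$, their composition is an algebra isomorphism $\mathbb V \to \mathbb V^{\mathrm{opp}}$, i.e.\ an antiautomorphism of $\mathbb V$; this is immediate from the definitions recalled in Section 2. Next I would track the action on the two generators: $\dagger(x) = x$, $\dagger(y) = y$ by Lemma \ref{lem:fantiaut}, and then $\sigma(x) = y$, $\sigma(y) = x$ by Lemma \ref{lem:faut}, so $\tau$ sends $x \leftrightarrow y$ as required.

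To establish $\tau^2 = \mathrm{id}$, I would use the preceding (unnumbered) lemma that $\sigma$ and $\dagger$ commute. Thus
\begin{align*}
\tau^2 = (\sigma \circ \dagger) \circ (\sigma \circ \dagger) = \sigma \circ (\dagger \circ \sigma) \circ \dagger = \sigma \circ (\sigma \circ \dagger) \circ \dagger = \sigma^2 \circ \dagger^2 = \mathrm{id} \circ \mathrm{id} = \mathrm{id},
\end{align*}
where the last step uses $\sigma^2 = \mathrm{id}$ from Lemma \ref{lem:faut} and $\dagger^2 = \mathrm{id}$ from Lemma \ref{lem:fantiaut}.

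There is no real obstacle here: all the hard content (the existence of $\sigma$ and $\dagger$ with their stated properties, and their commutativity) is packaged into the three preceding lemmas, and the present lemma is essentially a bookkeeping corollary assembling them. The only thing to be slightly careful about is the order-of-composition convention when composing an automorphism with an antiautomorphism, but either ordering works since $\sigma$ and $\dagger$ commute, so $\tau = \sigma \circ \dagger = \dagger \circ \sigma$.
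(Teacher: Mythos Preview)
Your proof is correct; the paper itself omits the proof entirely, stating that it (along with the three preceding lemmas) is routine. Your argument is exactly the routine verification the paper has in mind.
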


\begin{example}\label{ex:aaa} \rm The automorphism $\sigma$ sends
\begin{align*}
xxx \leftrightarrow yyy, \qquad xxyy\leftrightarrow yyxx, \qquad xyxxyy \leftrightarrow yxyyxx.
\end{align*}
\noindent The antiautomorphism $\dagger$ sends
\begin{align*}
xxx \leftrightarrow xxx, \qquad xxyy\leftrightarrow yyxx, \qquad xyxxyy \leftrightarrow yyxxyx.
\end{align*}
The antiautomorphism $\tau$ sends
\begin{align*}
xxx \leftrightarrow yyy, \qquad xxyy\leftrightarrow xxyy, \qquad xyxxyy \leftrightarrow xxyyxy.
\end{align*}
\end{example}

\noindent The free algebra $\mathbb V$ has a $\mathbb N^2$-grading
for which $x$ and $y$ are homogeneous, with degrees
$(1,0)$ and $(0,1)$  respectively. For 
 $(r,s)\in \mathbb N^2$ 
let $\mathbb V(r,s)$ denote the $(r,s)$-homogeneous component of the grading.
These homogeneous components are described as follows.
Let $w=\ell_1\ell_2\cdots \ell_n$ denote a word in $\mathbb V$.
The {\it $x$-degree} of $w$ is the
cardinality of the set
$\lbrace i | 1 \leq i \leq n,\;\ell_i = x\rbrace$.
The {\it $y$-degree} of $w$ is the
cardinality of the set
$\lbrace i | 1 \leq i \leq n,\;\ell_i = y\rbrace$.
For $(r,s)\in \mathbb N^2$ 
the subspace $\mathbb V(r,s)$ has a basis consisting
of the words in $\mathbb V$ that have $x$-degree $r$ and
$y$-degree $s$. The dimension of $\mathbb V(r,s)$ is equal to
the binomial coefficient $\binom{r+s}{r}$. By construction $\mathbb V(0,0)=\mathbb F {\bf 1}$. By construction, the
sum $\mathbb V=\sum_{(r,s) \in \mathbb N^2}  \mathbb V(r,s)$ is direct.
%%For notational convenience, define $\mathbb V(r,-1)=0$ and $\mathbb V(-1,s)=0$ for $r,s \in \mathbb N$.
\medskip

%\begin{definition}\rm For $r,s \in \mathbb N$ define $U(r,s)=\mathbb V(r,s) \cap U^+_q$. Note that the sum $U^+_q=\sum_{r,s\in \mathbb N}  U(r,s)$ is direct.
 %%%\end{definition}
 
 \begin{example}\label{ex:v2e} \rm The following is a basis for the vector space $\mathbb V(2,3)$:
 \begin{align*}
 &xx yyy, \quad xyxyy, \quad xyyxy, \quad xyyyx, \quad yxxyy, 
 \\
 & yxyxy, \quad yxyyx, \quad yyxxy, \quad yyxyx, \quad yyyxx.
 \end{align*}
 \end{example}
 
\noindent Let ${\rm End}(\mathbb V)$ denote the algebra  consisting  of the 
$\mathbb F$-linear maps from $\mathbb V$ to $\mathbb V$. Let $I$ denote the identity in ${\rm End}(\mathbb V)$.

 \section{The maps $X$, $Y$, $K$}
 \noindent In this section we describe some maps $X$, $Y$, $K$  in ${\rm End}(\mathbb V)$ that will be used in our main results.
 
 \begin{definition}\label{def:xy} \rm Let $X$ denote the automorphism of the free algebra $\mathbb V$ that sends $x \mapsto q x$ and $y \mapsto y$.
  Let $Y$ denote the automorphism of the free algebra $\mathbb V$ that sends $x \mapsto  x$ and $y \mapsto q y$.
   % Let $K$ denote the automorphism of the free algebra $\mathbb V$ that sends $x \mapsto  q^2x$ and $y \mapsto q^{-2} y$.
% Note that $X, Y, K \in {\rm End}(\mathbb V)$.
 \end{definition}

\begin{example}\label{ex:xy} \rm The map $X$ sends
\begin{align*}
xxx \mapsto q^3xxx, \qquad xxyy\mapsto q^2 xxyy, \qquad xyxxyy \mapsto q^3xyxxyy.
\end{align*}
\noindent The map $Y$ sends
\begin{align*}
xxx \mapsto xxx, \qquad xxyy\mapsto q^2 xxyy, \qquad xyxxyy \mapsto q^3 xyxxyy.
\end{align*}
%%\noindent The map $K$ sends
%%\begin{align*}
%%xxx \mapsto q^6 xxx, \qquad xxyy\mapsto xxyy, \qquad xyxxyy \mapsto xyxxyy.
%%\end{align*}
\end{example}
 
\begin{lemma} \label{lem:xyE} For $(r,s) \in \mathbb N^2$ the maps $X$ and $Y$ act on
$\mathbb V(r,s)$ as $q^r I$ and $q^s I$, respectively.
\end{lemma}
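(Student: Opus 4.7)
The plan is to reduce the claim to its action on the standard basis of $\mathbb V(r,s)$ and then exploit the fact that $X$ and $Y$ are algebra automorphisms.

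First I would fix $(r,s) \in \mathbb N^2$ and let $w = \ell_1 \ell_2 \cdots \ell_n$ be an arbitrary standard basis word for $\mathbb V(r,s)$, so that $n = r+s$, exactly $r$ of the letters $\ell_i$ equal $x$, and exactly $s$ of them equal $y$. Since $X$ is an algebra automorphism of $\mathbb V$ (Definition \ref{def:xy}), we have
\begin{align*}
X(w) = X(\ell_1)\, X(\ell_2)\cdots X(\ell_n).
\end{align*}
By the defining rules $X(x) = qx$ and $X(y) = y$, each factor $\ell_i$ equal to $x$ contributes a scalar $q$ while each factor equal to $y$ contributes $1$. Collecting the scalars in front yields $X(w) = q^r w$. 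Since the words of $x$-degree $r$ and $y$-degree $s$ form a basis of $\mathbb V(r,s)$, linearity gives $X = q^r I$ on $\mathbb V(r,s)$.

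The argument for $Y$ is the mirror image: applying $Y(x) = x$ and $Y(y) = qy$ to a basis word $w \in \mathbb V(r,s)$ produces a factor of $q$ for each of the $s$ occurrences of $y$, giving $Y(w) = q^s w$, hence $Y = q^s I$ on $\mathbb V(r,s)$.

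There is no real obstacle here; the only thing to check carefully is that $X$ and $Y$ genuinely are algebra automorphisms (so that they pass through products of letters as in the display above), which is built into Definition \ref{def:xy}. The proof is therefore just a two-line verification on standard basis words followed by linear extension, and no additional machinery beyond the definitions of $\mathbb V(r,s)$, $X$, and $Y$ is required.
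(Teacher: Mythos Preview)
Your proof is correct and follows exactly the approach the paper takes: the paper's proof simply cites ``the description of $\mathbb V(r,s)$ above Example~\ref{ex:v2e},'' which is precisely the basis-word computation you wrote out in detail. There is nothing to add.
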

\begin{proof} By the description of $\mathbb V(r,s)$  above Example \ref{ex:v2e}.
\end{proof}

 \noindent By construction the maps $X$, $Y$ are invertible, and they commute. 
 
  \begin{definition}\label{def:k} \rm Define $K=X^2 Y^{-2}$.
   Thus $K$ is the automorphism of the free algebra $\mathbb V$ that sends $x \mapsto  q^2x$ and $y \mapsto q^{-2} y$. 
%% Note that $X, Y, K \in {\rm End}(\mathbb V)$.
 \end{definition}

\begin{example}\label{ex:k} \rm The map $K$ sends
\begin{align*}
xxx \mapsto q^6 xxx, \qquad xxyy\mapsto xxyy, \qquad xyxxyy \mapsto xyxxyy.
\end{align*}
\end{example}
 
\begin{lemma} \label{lem:k} For $(r,s) \in \mathbb N^2$ the map $K$ acts on
$\mathbb V(r,s)$ as $q^{2r-2s} I$.
\end{lemma}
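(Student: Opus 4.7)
The plan is to deduce this directly from Lemma \ref{lem:xyE} together with Definition \ref{def:k}. By Definition \ref{def:k}, $K = X^2 Y^{-2}$, so the task reduces to computing the action of each factor on the homogeneous component $\mathbb V(r,s)$ and composing.

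First I would invoke Lemma \ref{lem:xyE}, which tells us that $X$ acts on $\mathbb V(r,s)$ as $q^r I$ and $Y$ acts on $\mathbb V(r,s)$ as $q^s I$. Since these are scalar operators, they are invertible on $\mathbb V(r,s)$ and we have $X^2 = q^{2r} I$ and $Y^{-2} = q^{-2s} I$ on this subspace. Composing (and noting that $X$, $Y$ commute and both preserve the grading, since they scale each homogeneous basis word), we obtain
\begin{align*}
K\big|_{\mathbb V(r,s)} = X^2 Y^{-2}\big|_{\mathbb V(r,s)} = q^{2r} q^{-2s} I = q^{2r-2s} I,
\end{align*}
as claimed. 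There is essentially no obstacle: the only thing to confirm is that $X$ and $Y$ preserve $\mathbb V(r,s)$, which is immediate from Lemma \ref{lem:xyE} (they act by scalars on it). Alternatively, one could verify the statement directly on a basis word $w \in \mathbb V(r,s)$ by observing that $K$ sends each letter $x$ to $q^2 x$ and each letter $y$ to $q^{-2} y$ (Definition \ref{def:k}), so applying $K$ to $w$ multiplies by $q^2$ once for each of the $r$ letters $x$ and by $q^{-2}$ once for each of the $s$ letters $y$, giving the overall scalar $q^{2r-2s}$.
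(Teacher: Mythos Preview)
Your proof is correct and follows exactly the same approach as the paper, which simply cites Lemma \ref{lem:xyE} and Definition \ref{def:k}. You have merely spelled out the composition of scalar actions in more detail than the paper does.
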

\begin{proof} By Lemma \ref{lem:xyE} and Definition \ref{def:k}.
\end{proof}

\begin{lemma} \label{lem:AAXYK} The following diagrams commute:
\begin{equation*}
{\begin{CD}
\mathbb V @>X^{\pm 1}  >>
               {\mathbb V}
              \\
         @V \sigma VV                   @VV \sigma V \\
         \mathbb V @>>Y^{\pm 1} >
                                  {\mathbb  V}
                        \end{CD}} \qquad \qquad
 {\begin{CD}
\mathbb V @>Y^{\pm 1}  >>
               {\mathbb V}
              \\
         @V \sigma VV                   @VV \sigma V \\
         \mathbb V @>>X^{\pm 1} >
                                  {\mathbb  V}
                        \end{CD}} \qquad \qquad                                           
{\begin{CD}
\mathbb V @>K^{\pm 1}  >>
               {\mathbb V}
              \\
         @V \sigma VV                   @VV \sigma V \\
         \mathbb V @>>K^{\mp 1} >
                                  {\mathbb  V}
                        \end{CD}} \qquad 
  \end{equation*}
  \begin{equation*}
{\begin{CD}
\mathbb V @>X^{\pm 1}  >>
               {\mathbb V}
              \\
         @V \dagger VV                   @VV \dagger V \\
         \mathbb V @>>X^{\pm 1} >
                                  {\mathbb  V}
                        \end{CD}} \qquad \qquad
                        {\begin{CD}
                     \mathbb V @>Y^{\pm 1}  >>
               {\mathbb V}
              \\
         @V \dagger VV                   @VV \dagger V \\
         \mathbb V @>>Y^{\pm 1} >
                                  {\mathbb  V}
                        \end{CD}} \qquad \qquad                    
{\begin{CD}
\mathbb V @>K^{\pm 1}  >>
               {\mathbb V}
              \\
         @V \dagger VV                   @VV \dagger V \\
         \mathbb V @>>K^{\pm 1} >
                                  {\mathbb  V}
                        \end{CD}} \qquad 
                        \end{equation*}
      \begin{equation*}
{\begin{CD}
\mathbb V @>X^{\pm 1}  >>
               {\mathbb V}
              \\
         @V \tau VV                   @VV \tau V \\
         \mathbb V @>>Y^{\pm 1} >
                                  {\mathbb  V}
                        \end{CD}} \qquad \qquad
                        {\begin{CD}
                        \mathbb V @>Y^{\pm 1}  >>
               {\mathbb V}
              \\
         @V \tau VV                   @VV \tau V \\
         \mathbb V @>>X^{\pm 1} >
                                  {\mathbb  V}
                        \end{CD}} \qquad \qquad                     
{\begin{CD}
\mathbb V @>K^{\pm 1}  >>
               {\mathbb V}
              \\
         @V \tau VV                   @VV \tau V \\
         \mathbb V @>>K^{\mp 1} >
                                  {\mathbb  V}
                        \end{CD}} \qquad
                        \end{equation*}
 \end{lemma}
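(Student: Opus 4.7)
The plan is to verify each of the nine commutative diagrams by evaluating both compositions on the two generators $x,y$ of the free algebra $\mathbb V$. This reduction is legitimate because in every diagram the two parallel compositions have matching algebraic type: in the $\sigma$-diagrams both paths are algebra homomorphisms $\mathbb V \to \mathbb V$ (each is a composition of automorphisms), while in the $\dagger$-diagrams and $\tau$-diagrams both paths are antihomomorphisms (an antiautomorphism composed with an automorphism, in either order). Two (anti)homomorphisms from a free algebra that agree on a generating set agree everywhere, so agreement on $\{x,y\}$ forces agreement on all of $\mathbb V$.

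The first step is to handle the six diagrams involving $X^{\pm 1}$ and $Y^{\pm 1}$. It suffices to verify the cases with exponent $+1$, since the cases with exponent $-1$ follow by inverting the commutative squares already obtained (both vertical arrows are involutions and both horizontal arrows are invertible). The verifications on generators are one line apiece, using Definition \ref{def:xy} and Lemmas \ref{lem:faut}, \ref{lem:fantiaut}, \ref{def:ftauA}; for instance, for the first diagram, $\sigma X(x) = \sigma(qx) = qy = Y(y) = Y\sigma(x)$ and $\sigma X(y) = \sigma(y) = x = Y(x) = Y\sigma(y)$.

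The three diagrams involving $K^{\pm 1}$ can then be obtained for free, without any further direct computation on generators, by using $K = X^2 Y^{-2}$ (Definition \ref{def:k}) together with the commutativity of $X$ and $Y$ and the already-established diagrams for $X^{\pm 1}$ and $Y^{\pm 1}$. For example, $\sigma K = \sigma X^2 Y^{-2} = Y^2 \sigma Y^{-2} = Y^2 X^{-2} \sigma = K^{-1}\sigma$. Similarly, the three $\tau$-diagrams follow from the corresponding $\sigma$ and $\dagger$ diagrams by composition, using $\tau = \sigma\dagger = \dagger\sigma$ (Lemma \ref{def:ftauA}).

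There is no genuine mathematical obstacle here; the only concern is bookkeeping, namely tracking nine diagrams with two sign choices each. To keep the write-up compact, I would verify the three base cases (the $\sigma X$, $\dagger X$, and $\dagger Y$ diagrams with exponent $+1$) on generators explicitly, and then derive the remaining cases by the formal manipulations outlined above.
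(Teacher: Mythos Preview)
Your proposal is correct and is essentially what the paper's proof amounts to: the paper simply writes ``Routine.'' Your explicit reduction to generators (using that both paths in each square are (anti)homomorphisms of the free algebra) and your derivation of the $K$ and $\tau$ cases from the $X,Y$ and $\sigma,\dagger$ cases are exactly the kind of routine verification the paper has in mind.
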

 \begin{proof} Routine.
 \end{proof}

 %%%%
 \section{The maps $A^*_L$, $B^*_L$, $A^*_R$, $B^*_R$}
\noindent In this section we recall from \cite{boxq} some maps $A^*_L$, $B^*_L$, $A^*_R$, $B^*_R$ in ${\rm End}(\mathbb V)$ that will be used in our main results.
First we mention some notation. The Kronecker delta $\delta_{r,s}$ is equal to $1$ if $r=s$, and $0$ if $r \not=s$.

\begin{definition} \label{def:AABB} \rm (See \cite[Lemma~4.3]{boxq}.)  Define the  maps  $A^*_L$, $B^*_L$, $A^*_R$, $B^*_R$ in ${\rm End}(\mathbb V)$ as follows.
For a nontrivial word
$w=\ell_1\ell_2\cdots \ell_n$ in $\mathbb V$,
\begin{align*}
&
A^*_L w =
\ell_2\cdots \ell_n \delta_{\ell_1, x},
\qquad \qquad  \;\;
B^*_L w=
\ell_2\cdots \ell_n \delta_{\ell_1, y},
\\
&
A^*_R w=
\ell_1\cdots \ell_{n-1} \delta_{\ell_n, x},
\qquad \qquad
B^*_Rw =
\ell_1\cdots \ell_{n-1} \delta_{\ell_n, y}.
\end{align*}
Moreover
\begin{align}
A^*_L {\bf 1}=0, \qquad \quad 
B^*_L{\bf 1}=0, \qquad \quad 
A^*_R{\bf 1}=0, \qquad \quad 
B^*_R{\bf 1}=0.
\label{eq:ABaction}
\end{align}
\end{definition}
\begin{example}\rm The maps $A^*_L$, $B^*_L$, $A^*_R$, $B^*_R$ are illustrated in the table below.
\bigskip

\centerline{
\begin{tabular}[t]{c|cccccc}
$w$ & $x$ & $y$ & $xx$ & $xy$ & $yx$ & $yy$ \\
    \hline  
   $A^*_Lw$ & $\bf 1$ & $0$ & $x$ & $y$ & $0$ & $0$ \\
   $B^*_L w$ & $0$ & $\bf 1$ & $0$ & $0$ & $x$ & $y$ \\
   $A^*_Rw$ & $\bf 1$ & $0$ & $x$ & $0$ & $y$ & $0$ \\
   $B^*_Rw$ & $0$ & $\bf 1$ & $0$ & $x$ & $0$ & $y$ 
         \\
              \end{tabular}
              }
              \bigskip

\noindent
\end{example}

\begin{lemma} \label{lem:aug}
For $v \in \mathbb V$,
\begin{align*}
&A^*_L(xv) = v,\qquad  \quad
A^*_L(yv) = 0,\qquad  \quad
B^*_L(xv) = 0,\qquad \quad
B^*_L(yv) = v,
\\
&A^*_R(vx) = v,\qquad \quad
A^*_R(vy) = 0,\qquad \quad
B^*_R(vx) = 0,\qquad \quad
B^*_R(vy) = v.
\end{align*}
\end{lemma}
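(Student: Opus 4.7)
The plan is to prove each of the eight identities by reducing to the case where $v$ is a word in the standard basis for $\mathbb V$, then appealing directly to Definition \ref{def:AABB}. Since the maps $A^*_L, B^*_L, A^*_R, B^*_R$ are linear and the operations $v \mapsto xv$, $v \mapsto yv$, $v \mapsto vx$, $v \mapsto vy$ are also linear, it suffices to verify each identity as $v$ ranges over the standard basis of words.

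For the first identity, fix a word $v = \ell_1 \ell_2 \cdots \ell_n$ with $n \geq 0$, where $n = 0$ is to be read as $v = \mathbf{1}$. Then $xv$ is the nontrivial word $x \ell_1 \cdots \ell_n$ of length $n+1$ whose leftmost letter is $x$, so Definition \ref{def:AABB} yields $A^*_L(xv) = \ell_1 \cdots \ell_n \, \delta_{x,x} = v$ (in the $n = 0$ case the right-hand side is the empty product $\mathbf{1}$, matching $v$). The identity $A^*_L(yv) = 0$ is identical except that now the leftmost letter of $yv$ is $y$, so the Kronecker factor $\delta_{y,x}$ forces the output to be $0$. The two identities for $B^*_L$ are obtained by the same inspection with the roles of the deltas swapped.

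For the four identities on the right, I would either repeat the argument inspecting the rightmost letter of $vx$ and $vy$, or more efficiently note that a quick check on words from Definition \ref{def:AABB} gives the relations $\dagger \, A^*_L = A^*_R \, \dagger$ and $\dagger \, B^*_L = B^*_R \, \dagger$, where $\dagger$ is the antiautomorphism of Lemma \ref{lem:fantiaut}. The right-letter identities then follow from the left-letter ones by applying $\dagger$ to both sides and using $\dagger(xv) = \dagger(v)\, x$, $\dagger(yv) = \dagger(v)\, y$, together with $\dagger^2 = \mathrm{id}$.

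There is essentially no obstacle: the lemma is almost a restatement of Definition \ref{def:AABB}. The only bookkeeping point is the boundary case $v = \mathbf{1}$ for the left-multiplication identities (and $v = \mathbf{1}$ for the right-multiplication ones), which is handled cleanly by the convention \eqref{eq:ABaction} together with the observation that stripping the unique letter from a one-letter word produces $\mathbf{1}$.
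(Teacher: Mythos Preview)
Your proof is correct and follows essentially the same approach as the paper, which simply says ``Use Definition~\ref{def:AABB}.'' You have just spelled out the details of that verification, including the optional shortcut via $\dagger$ for the right-hand identities.
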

\begin{proof} Use Definition  \ref{def:AABB}.
\end{proof}

\noindent For notational convenience, define $\mathbb V(r,-1)=0$ and $\mathbb V(-1,s)=0$ for $r,s \in \mathbb N$.
\begin{lemma} \label{lem:down} For $(r,s)\in \mathbb N^2$ we have
\begin{align*} 
&A^*_L \mathbb V(r,s) \subseteq \mathbb V(r-1,s), \qquad \qquad B^*_L \mathbb V(r,s) \subseteq \mathbb V(r,s-1),
\\
&A^*_R \mathbb V(r,s) \subseteq  \mathbb V(r-1,s), \qquad \qquad B^*_R \mathbb V(r,s) \subseteq \mathbb V(r,s-1).
\end{align*}
\end{lemma}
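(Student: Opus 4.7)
The plan is to verify the four inclusions by testing on the standard basis for $\mathbb V(r,s)$. Recall from the description above Example \ref{ex:v2e} that $\mathbb V(r,s)$ has a basis consisting of the words in $\mathbb V$ of $x$-degree $r$ and $y$-degree $s$. Since each of $A^*_L, B^*_L, A^*_R, B^*_R$ is linear, it suffices to check the four inclusions on an arbitrary basis word $w = \ell_1 \ell_2 \cdots \ell_n$ of $x$-degree $r$ and $y$-degree $s$. Note that $n = r+s$; the trivial word $\bf 1$ corresponds to the case $(r,s)=(0,0)$, and all four maps send $\bf 1 \mapsto 0$ by \eqref{eq:ABaction}, which is consistent with the conventions $\mathbb V(-1,0)=0$ and $\mathbb V(0,-1)=0$.

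For the inclusion $A^*_L \mathbb V(r,s) \subseteq \mathbb V(r-1,s)$, suppose $w$ is a nontrivial basis word of $x$-degree $r$ and $y$-degree $s$. By Definition \ref{def:AABB}, $A^*_L w = 0$ if $\ell_1 = y$, and $A^*_L w = \ell_2 \cdots \ell_n$ if $\ell_1 = x$. In the first case we have $0 \in \mathbb V(r-1,s)$ trivially. In the second case, removing the leading letter $x$ decreases the $x$-degree by one and leaves the $y$-degree unchanged, so $\ell_2 \cdots \ell_n$ has $x$-degree $r-1$ and $y$-degree $s$; moreover the assumption $\ell_1=x$ forces $r\geq 1$, so $\mathbb V(r-1,s)$ is an honest homogeneous component and $A^*_L w$ lies in it. If $r=0$ then $\ell_1 = y$ is forced and $A^*_L w = 0 \in \mathbb V(-1,s) = 0$ by convention.

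The remaining three inclusions follow by the same argument. For $B^*_L$, the image is either $0$ or $\ell_2 \cdots \ell_n$, the latter occurring when $\ell_1 = y$; in this case the $x$-degree is unchanged and the $y$-degree drops by one. For $A^*_R$ and $B^*_R$, one reads the word from the right rather than the left, but the bookkeeping of degrees is identical: removing a trailing $x$ drops the $x$-degree by one and removing a trailing $y$ drops the $y$-degree by one, while all other cases yield $0$.

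There is no substantive obstacle here; the lemma is an immediate bookkeeping consequence of Definition \ref{def:AABB} together with the $\mathbb N^2$-grading of $\mathbb V$. The only minor care is handling the boundary cases $r=0$ or $s=0$ and the trivial word, which are absorbed cleanly by the conventions $\mathbb V(r,-1)=0=\mathbb V(-1,s)$ introduced just above the lemma.
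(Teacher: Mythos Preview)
Your proof is correct and takes essentially the same approach as the paper: the paper's proof is simply ``By Definition \ref{def:AABB} or Lemma \ref{lem:aug},'' and your argument is a careful unwinding of exactly that, checking the inclusions on basis words via the definition of the four maps.
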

\begin{proof} By Definition \ref{def:AABB} or Lemma \ref{lem:aug}.
\end{proof}
\begin{lemma} \label{lem:ln} The maps $A^*_L$, $B^*_L$, $A^*_R$, $B^*_R$ are locally nilpotent on the vector space $\mathbb V$.
\end{lemma}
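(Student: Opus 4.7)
The plan is to reduce local nilpotency to a statement about individual words, where it is immediate from how the maps act. Specifically, each of $A^*_L$, $B^*_L$, $A^*_R$, $B^*_R$ strictly decreases the length of a nontrivial word (when the result is nonzero), and annihilates $\mathbf{1}$, so any given word is killed after finitely many applications; the result then extends to arbitrary $v \in \mathbb V$ because $v$ is a finite linear combination of words.

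In more detail, I would first observe via Definition~\ref{def:AABB} that if $w = \ell_1 \ell_2 \cdots \ell_n$ is a word of length $n \geq 1$, then applying any of the four maps either yields $0$ or yields a word of length $n-1$. Equivalently (and this is what Lemma~\ref{lem:down} already records in graded form), each of these maps sends $\mathbb V(r,s)$ into $\mathbb V(r-1,s) + \mathbb V(r,s-1)$, so the total degree $r+s$ drops by exactly one whenever the image is nonzero. Combined with $A^*_L \mathbf{1} = B^*_L \mathbf{1} = A^*_R \mathbf{1} = B^*_R \mathbf{1} = 0$ from \eqref{eq:ABaction}, this gives $T^{n+1} w = 0$ for any word $w$ of length $n$ and any $T \in \{A^*_L, B^*_L, A^*_R, B^*_R\}$.

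Now let $v \in \mathbb V$ be arbitrary. Since the words form the standard basis of $\mathbb V$, we may write $v = \sum_{i=1}^{k} c_i w_i$ with $c_i \in \mathbb F$ and $w_i$ words. Let $N = \max_{1 \leq i \leq k} (\text{length of } w_i)$. Then for each $T \in \{A^*_L, B^*_L, A^*_R, B^*_R\}$ we have $T^{N+1} w_i = 0$ for all $i$, hence $T^{N+1} v = 0$. This proves that $T$ is locally nilpotent on $\mathbb V$.

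There is no real obstacle here; the only thing to be careful about is that local nilpotency is a pointwise condition, not a uniform one, so one must not confuse it with nilpotency (indeed these maps are not nilpotent on the infinite-dimensional space $\mathbb V$, since words of arbitrary length exist). The finite-linear-combination step is what makes the argument go through for a general $v$.
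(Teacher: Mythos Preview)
Your proof is correct and follows essentially the same approach as the paper: both arguments exploit that each of the four maps strictly lowers the (bi)degree, so that iterating kills any homogeneous element and hence any finite linear combination. The paper's version is terser---it simply invokes the direct-sum decomposition $\mathbb V = \sum_{(r,s)\in\mathbb N^2}\mathbb V(r,s)$ together with Lemma~\ref{lem:down}---whereas you spell out the length-decrease on words explicitly, but the content is the same.
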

\begin{proof} We mentioned above Example \ref{ex:v2e}
that the sum $\mathbb V = \sum_{(r,s) \in \mathbb N^2} \mathbb V(r,s)$ is direct. The result follows from this
and Lemma  \ref{lem:down}.
\end{proof}
\noindent Next we describe how the maps $X$, $Y$ are related to the maps  $A^*_L$, $B^*_L$, $A^*_R$, $B^*_R$.

\begin{lemma} \label{lem:XYAABB}
We have
\begin{align*}
&X A^*_L = q^{-1}A^*_L X, \qquad 
X B^*_L = B^*_L X, \qquad 
X A^*_R = q^{-1}A^*_R X, \qquad 
X B^*_R= B^*_R X, \\
&YA^*_L =A^*_L Y, \qquad 
Y B^*_L = q^{-1}B^*_L Y, \qquad 
Y A^*_R = A^*_R Y, \qquad 
Y B^*_R= q^{-1}B^*_R Y.
\end{align*}
\end{lemma}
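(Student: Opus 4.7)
The plan is to verify each of the eight identities by evaluating both sides on the standard basis of words, and to organize the bookkeeping via the grading $\mathbb V = \bigoplus_{(r,s)\in\mathbb N^2}\mathbb V(r,s)$. Since every relation to be proved is an equality of elements of ${\rm End}(\mathbb V)$ and the words span $\mathbb V$, it suffices to check agreement on an arbitrary word $w = \ell_1\ell_2\cdots\ell_n\in\mathbb V(r,s)$.

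The key inputs are Lemma \ref{lem:xyE}, which gives $Xw = q^r w$ and $Yw = q^s w$, and Lemma \ref{lem:down}, which records the degree shifts caused by $A^*_L,B^*_L,A^*_R,B^*_R$. Concretely, I would treat the eight identities in two blocks of four. For the $X$-block, fix a word $w\in\mathbb V(r,s)$. Then $Xw = q^r w$, so the right-hand side of each identity equals a scalar multiple of the corresponding starred map applied to $w$. The left-hand side applies the starred map first, landing in a homogeneous component $\mathbb V(r',s')$ on which $X$ acts as $q^{r'}I$, by Lemma \ref{lem:down}. For $A^*_L$ and $A^*_R$ the $x$-degree drops by $1$ (and otherwise the output is $0$), yielding $X A^*_L w = q^{r-1} A^*_L w = q^{-1}\,q^r A^*_L w = q^{-1} A^*_L X w$, and similarly for $A^*_R$; for $B^*_L$ and $B^*_R$ the $x$-degree is preserved, yielding $X B^*_L w = q^r B^*_L w = B^*_L Xw$, and similarly for $B^*_R$.

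The $Y$-block is handled identically after swapping the roles of the two coordinates of the grading: $B^*_L$ and $B^*_R$ drop the $y$-degree by $1$, producing the factor $q^{-1}$, while $A^*_L$ and $A^*_R$ preserve the $y$-degree, so $Y$ commutes with them. There is no genuine obstacle in this lemma; the only thing requiring care is the sign convention, namely that a map which \emph{lowers} the relevant degree conjugates $X$ or $Y$ into $q^{-1}$ times itself, not $q$ times itself. Once one matches up each starred map with the degree it shifts, the eight identities drop out by a uniform one-line calculation on each basis word.
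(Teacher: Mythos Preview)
Your proof is correct and follows essentially the same approach as the paper, which simply cites Lemmas~\ref{lem:xyE} and~\ref{lem:down}. You have spelled out in detail the one-line computation that the paper leaves implicit: each starred map shifts one coordinate of the $\mathbb N^2$-grading, and $X,Y$ act by the corresponding scalar on each homogeneous piece.
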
 
\begin{proof} By Lemmas \ref{lem:xyE},  \ref{lem:down}.
\end{proof}

\noindent The next result is about $A^*_L$ and $B^*_L$; a similar result
holds for $A^*_R$ and $B^*_R$.
Observe that the sum $\mathbb V = \mathbb F {\bf 1} + x \mathbb V + y \mathbb V$ is direct.

\begin{lemma}
\label{lem:ABKer} The following {\rm (i)--(v)} hold:
\begin{enumerate}
\item[\rm (i)] ${\rm ker}A^*_L$ has a basis consisting of the words in $\mathbb V$ that do not begin with $x$;
\item[\rm (ii)] ${\rm ker} A^*_L = \mathbb F {\bf 1} + y \mathbb V$;
\item[\rm (iii)] ${\rm ker}B^*_L$ has a basis consisting of the words in $\mathbb V$ that do not begin with $y$;
\item[\rm (iv)] ${\rm ker} B^*_L = \mathbb F {\bf 1} + x \mathbb V$;
\item[\rm (v)] ${\rm ker} A^*_L \cap {\rm ker} B^*_L = \mathbb F {\bf 1}$.
\end{enumerate}
\end{lemma}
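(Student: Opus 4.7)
My plan is to exploit the fact that the words form a basis of $\mathbb V$ and that $A^*_L$, $B^*_L$ act on this basis in an especially transparent way: each peels off the leading letter if it matches, and annihilates everything else.

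For (i), I would first observe that $A^*_L$ kills $\mathbf 1$ and every word beginning with $y$, so the span of these basis vectors is contained in $\ker A^*_L$. For the reverse inclusion I would expand an arbitrary $v\in\ker A^*_L$ in the standard basis, split it as $v = c_0\mathbf 1 + \sum_{w\text{ starts with }y}c_w w + \sum_{w\text{ starts with }x} c_w w$, and apply $A^*_L$. The first two sums vanish, and the restriction of $A^*_L$ to the span of words starting with $x$ is a bijection onto the span of all words (since $xw' \mapsto w'$ is a bijection on words). Thus $A^*_L v = 0$ forces $c_w=0$ for every $w$ starting with $x$, proving (i). Part (iii) is identical with the roles of $x,y$ interchanged (or one may simply conjugate by the automorphism $\sigma$ of Lemma \ref{lem:faut}, noting that $\sigma$ swaps $A^*_L$ and $B^*_L$, which is immediate from Definition \ref{def:AABB}).

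For (ii), I would note that $y\mathbb V$ has the set of nontrivial words beginning with $y$ as a basis (these are exactly the products $yw'$ as $w'$ ranges over all words). Together with $\mathbb F\mathbf 1$, this gives the basis identified in (i), so (ii) follows. The sum $\mathbb F\mathbf 1 + y\mathbb V$ is direct because $y\mathbb V$ lies in $\sum_{s\geq 1}\bigoplus_{r}\mathbb V(r,s)$, which does not meet $\mathbb F\mathbf 1 = \mathbb V(0,0)$. Part (iv) follows from (iii) in exactly the same way.

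For (v), I would combine (ii) and (iv) with the direct sum decomposition $\mathbb V = \mathbb F\mathbf 1 \oplus x\mathbb V \oplus y\mathbb V$ (noted just above the lemma statement, and immediate from the partition of the standard basis into $\{\mathbf 1\}$, words beginning with $x$, and words beginning with $y$). An element of $(\mathbb F\mathbf 1 + y\mathbb V)\cap(\mathbb F\mathbf 1 + x\mathbb V)$ has, in this triple decomposition, zero $x\mathbb V$-component (by the first membership) and zero $y\mathbb V$-component (by the second), so it lies in $\mathbb F\mathbf 1$. This finishes the proof. No step presents a real obstacle; the whole lemma is bookkeeping built on the single observation that $A^*_L$ and $B^*_L$ restrict to bijections between the subsets of the standard basis on which they are nonzero and the standard basis itself.
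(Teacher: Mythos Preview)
Your proof is correct and follows exactly the approach indicated in the paper, which simply says to use Definition~\ref{def:AABB} together with the direct sum decomposition $\mathbb V = \mathbb F\mathbf 1 \oplus x\mathbb V \oplus y\mathbb V$ noted just before the lemma. You have merely spelled out in detail what the paper leaves as a routine verification.
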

\begin{proof} Use Definition \ref{def:AABB} and the observation above the lemma statement.
\end{proof}

\noindent The following result appears in \cite{boxq}; we give a short proof for the sake of completeness.
\begin{lemma} 
\label{lem:ABirred} {\rm (See \cite[Lemma~4.6]{boxq}.)}
Let $W$ denote a nonzero subspace of $\mathbb V$ that is closed under
$A^*_L$ and $B^*_L$. Then ${\bf 1} \in W$.
\end{lemma}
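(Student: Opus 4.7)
\smallskip

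\noindent\textbf{Proof plan.} The strategy is a minimal-degree argument driven by Lemma \ref{lem:ABKer}(v), which identifies $\ker A^*_L \cap \ker B^*_L$ with $\mathbb F {\bf 1}$. For a nonzero vector $w \in \mathbb V$, written uniquely as a linear combination of distinct words, let $\deg(w)$ denote the maximum length of a word appearing in $w$ with nonzero coefficient. Since each of $A^*_L$ and $B^*_L$ strips one letter from the left of every word it does not kill, any nonzero image under $A^*_L$ or $B^*_L$ satisfies $\deg(A^*_L w) < \deg(w)$ and $\deg(B^*_L w) < \deg(w)$.

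Now choose a nonzero $w \in W$ with $\deg(w)$ minimal; such a $w$ exists because $W$ is nonzero and $\deg$ takes values in $\mathbb N$. If $\deg(w) = 0$, then $w$ is a nonzero scalar multiple of ${\bf 1}$, so ${\bf 1} \in W$ and we are done. Suppose instead that $\deg(w) \geq 1$. Then $w \notin \mathbb F {\bf 1}$, so by Lemma \ref{lem:ABKer}(v) at least one of $A^*_L w$ or $B^*_L w$ is nonzero. By hypothesis, $W$ is closed under both maps, so this nonzero image lies in $W$ and has strictly smaller degree than $w$. This contradicts the minimality of $\deg(w)$, completing the argument.

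\noindent\textbf{Remarks on obstacles.} The proof is essentially immediate once Lemma \ref{lem:ABKer}(v) is in hand, so there is no serious obstacle; the only minor point to verify carefully is that $A^*_L$ and $B^*_L$ strictly decrease $\deg$ on nonzero images, which is clear from Definition \ref{def:AABB} since each surviving word in the image has length exactly one less than the corresponding word in $w$. Note that $W$ need not be graded for this argument to work: the invariant $\deg$ is defined on arbitrary elements, and the strict decrease under $A^*_L$ or $B^*_L$ follows without any homogeneity hypothesis.
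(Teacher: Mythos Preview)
Your proof is correct and takes essentially the same approach as the paper: a minimal-length argument that reduces to Lemma \ref{lem:ABKer}(v). The only cosmetic difference is that the paper phrases things in terms of the filtration $\mathbb V_n = \sum_{r+s\leq n} \mathbb V(r,s)$ and the minimal $n$ with $W\cap \mathbb V_n \neq 0$, whereas you work directly with $\deg(w)$; these are equivalent since $w \in \mathbb V_n$ iff $\deg(w)\leq n$.
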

\begin{proof} For $n \in \mathbb N$ define $\mathbb V_n = \sum_{r+s\leq n} \mathbb V(r,s)$. Note that $\mathbb V_0 = \mathbb F {\bf 1}$.
We have $\mathbb V_{n-1} \subseteq \mathbb V_n$ for $n\geq 1$, and $\mathbb V = \cup_{n \in \mathbb N} \mathbb V_n$. 
For $n\geq 1$ we have $A^*_L \mathbb V_n \subseteq \mathbb V_{n-1}$ and $B^*_L \mathbb V_n \subseteq \mathbb  V_{n-1}$, in view of Lemma \ref{lem:down}.
Since $W \not=0$,  there exists $n \in \mathbb N$ such that $W \cap \mathbb V_n \not=0$. Assume for the moment that $n=0$. Then ${\bf 1}\in W$ and we are done.
Next assume that $n\geq 1$. Without loss, we may assume that $W \cap \mathbb V_{n-1} =0$.
Pick $0 \not=v \in W \cap \mathbb V_n$. We have  $A^*_L v \in W \cap \mathbb V_{n-1}=0$ and 
 $B^*_L v \in W \cap \mathbb V_{n-1}=0$, so $v \in \mathbb F {\bf 1}$ in view of Lemma
 \ref{lem:ABKer}(v). By construction $0 \not=v \in W$, so ${\bf 1} \in W$.
\end{proof}

\begin{lemma} 
\label{lem:ABirred2}
Let $W$ denote a nonzero subspace of $\mathbb V$ that is closed under
$A^*_R$ and $B^*_R$. Then ${\bf 1} \in W$.
\end{lemma}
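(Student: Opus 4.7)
The plan is to reduce this statement to Lemma \ref{lem:ABirred} by applying the antiautomorphism $\dagger$ of the free algebra $\mathbb V$ from Lemma \ref{lem:fantiaut}. The idea is that $\dagger$ reverses the order of letters in a word, so it should exchange the ``left'' and ``right'' versions of our deletion operators.

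First I would verify the intertwining relations $\dagger \circ A^*_L \circ \dagger = A^*_R$ and $\dagger \circ B^*_L \circ \dagger = B^*_R$. This is a direct computation on a nontrivial word $w = \ell_1 \ell_2 \cdots \ell_n$: applying $\dagger$ gives $\ell_n \cdots \ell_2 \ell_1$, then $A^*_L$ strips the leading letter if it equals $x$ (i.e., if $\ell_n = x$), and applying $\dagger$ again reverses the remaining word back to $\ell_1 \cdots \ell_{n-1}$ (with the $\delta_{\ell_n,x}$ factor), matching the definition of $A^*_R$ in Definition \ref{def:AABB}. The case of $B^*_R$ is identical with $y$ in place of $x$, and both sides send $\bf 1$ to $0$.

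Now let $W$ be a nonzero subspace of $\mathbb V$ closed under $A^*_R$ and $B^*_R$, and set $W' = \dagger(W)$. Since $\dagger$ is a bijection on $\mathbb V$, the subspace $W'$ is nonzero. For any $v \in W'$, writing $v = \dagger(w)$ with $w \in W$, the intertwining relation gives
\begin{align*}
A^*_L v = A^*_L \dagger(w) = \dagger(A^*_R w),
\end{align*}
and $A^*_R w \in W$ by hypothesis, so $A^*_L v \in \dagger(W) = W'$. The same argument with $B$ in place of $A$ shows $W'$ is closed under $B^*_L$. By Lemma \ref{lem:ABirred} we conclude ${\bf 1} \in W'$, and since $\dagger({\bf 1}) = {\bf 1}$, we obtain ${\bf 1} \in W$.

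The only step requiring any care is the intertwining relation, but this is a short direct check from the definitions; otherwise the proof is essentially automatic. As an alternative, one could mimic the filtration argument of Lemma \ref{lem:ABirred} directly, first establishing the right-sided analogue of Lemma \ref{lem:ABKer}(v), namely $\ker A^*_R \cap \ker B^*_R = \mathbb F {\bf 1}$, using the decomposition $\mathbb V = \mathbb F {\bf 1} + \mathbb V x + \mathbb V y$; but routing through $\dagger$ is shorter and avoids duplicating the argument.
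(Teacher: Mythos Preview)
Your proof is correct and follows essentially the same approach as the paper: apply $\dagger$ to $W$, invoke Lemma~\ref{lem:ABirred} on $\dagger(W)$, and use $\dagger({\bf 1})={\bf 1}$. The intertwining relations you verify directly are recorded later in the paper as commuting diagrams (Lemma~\ref{lem:AAAABB}); the paper's proof simply asserts the invariance of $W^\dagger$ under $A^*_L$, $B^*_L$ without spelling this out.
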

\begin{proof} The $\dagger$-image $W^\dagger$ is a subspace of $\mathbb V$ that is invariant under $A^*_L$ and $B^*_L$. We have ${\bf 1} \in W^\dagger$ by Lemma \ref{lem:ABirred}, and ${\bf 1}^\dagger = {\bf 1}$ 
by construction, so ${\bf 1} \in W$.
\end{proof}

\begin{lemma} \label{lem:AAAABB} The following diagrams commute:
\begin{equation*}
 {\begin{CD}
\mathbb V @>A^*_L >>
               {\mathbb V}
              \\
         @V \sigma VV                   @VV \sigma V \\
         \mathbb V @>>B^*_L >
                                  {\mathbb  V}
                        \end{CD}} 
                        \qquad \qquad
                        {\begin{CD}
\mathbb V @>B^*_L >>
               {\mathbb V}
              \\
         @V \sigma VV                   @VV \sigma V \\
         \mathbb V @>>A^*_L >
                                  {\mathbb  V}
                        \end{CD}} \qquad \qquad 
 {\begin{CD}
\mathbb V @>A^*_R >>
               {\mathbb V}
              \\
         @V \sigma VV                   @VV \sigma V \\
         \mathbb V @>>B^*_R >
                                  {\mathbb  V}
                        \end{CD}} 
                        \qquad \qquad
                        {\begin{CD}
\mathbb V @>B^*_R >>
               {\mathbb V}
              \\
         @V \sigma VV                   @VV \sigma V \\
         \mathbb V @>>A^*_R >
                                  {\mathbb  V}
                        \end{CD}} \qquad
                                    \end{equation*}
                                    \begin{equation*}
       {\begin{CD}
\mathbb V @>A^*_L >>
               {\mathbb V}
              \\
         @V \dagger VV                   @VV \dagger V \\
         \mathbb V @>>A^*_R >
                                  {\mathbb  V}
                        \end{CD}} 
                        \qquad \qquad
                        {\begin{CD}
\mathbb V @>B^*_L >>
               {\mathbb V}
              \\
         @V \dagger VV                   @VV \dagger V \\
         \mathbb V @>>B^*_R >
                                  {\mathbb  V}
                        \end{CD}} \qquad \qquad
                                {\begin{CD}
\mathbb V @>A^*_R >>
               {\mathbb V}
              \\
         @V \dagger VV                   @VV \dagger V \\
         \mathbb V @>>A^*_L >
                                  {\mathbb  V}
                        \end{CD}} 
                        \qquad \qquad
                        {\begin{CD}
\mathbb V @>B^*_R >>
               {\mathbb V}
              \\
         @V \dagger VV                   @VV \dagger V \\
         \mathbb V @>>B^*_L >
                                  {\mathbb  V}
                        \end{CD}} \qquad
                                    \end{equation*}
     \begin{equation*}                                         {\begin{CD}
\mathbb V @>A^*_L >>
               {\mathbb V}
              \\
         @V \tau VV                   @VV \tau V \\
         \mathbb V @>>B^*_R >
                                  {\mathbb  V}
                        \end{CD}} 
                        \qquad \qquad
                        {\begin{CD}
\mathbb V @>B^*_L >>
               {\mathbb V}
              \\
         @V \tau VV                   @VV \tau V \\
         \mathbb V @>>A^*_R >
                                  {\mathbb  V}
                        \end{CD}} \qquad \qquad 
          {\begin{CD}
\mathbb V @>A^*_R >>
               {\mathbb V}
              \\
         @V \tau VV                   @VV \tau V \\
         \mathbb V @>>B^*_L >
                                  {\mathbb  V}
                        \end{CD}} 
                        \qquad \qquad
                        {\begin{CD}
\mathbb V @>B^*_R >>
               {\mathbb V}
              \\
         @V \tau VV                   @VV \tau V \\
         \mathbb V @>>A^*_L >
                                  {\mathbb  V}
                        \end{CD}} \qquad
                                    \end{equation*}

\end{lemma}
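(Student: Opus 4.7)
The plan is to verify each of the twelve diagrams by evaluating both composite maps on the standard basis of $\mathbb V$, since all maps involved are linear. First I would reduce the workload by observing redundancies: within each row of four diagrams, the diagram for $B^*_L$ (resp.\ $B^*_R$) follows from the diagram for $A^*_L$ (resp.\ $A^*_R$) by composing on both sides with the involution $\sigma$, $\dagger$, or $\tau$; moreover, the third row of $\tau$-diagrams follows from the first two rows since $\tau = \sigma \dagger = \dagger \sigma$ by Lemma~\ref{def:ftauA} and the fact that $\sigma, \dagger$ commute. So essentially only the four diagrams for $A^*_L$ and $A^*_R$ with respect to $\sigma$ and $\dagger$ need independent verification.

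Next I would check the case of the trivial word $\mathbf{1}$: all four of $A^*_L, B^*_L, A^*_R, B^*_R$ send $\mathbf{1}$ to $0$ by \eqref{eq:ABaction}, and $\sigma, \dagger, \tau$ each fix $\mathbf{1}$ (they are (anti)automorphisms of $\mathbb V$, hence preserve the multiplicative identity). So every diagram commutes at $\mathbf{1}$.

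For a nontrivial word $w = \ell_1 \ell_2 \cdots \ell_n$, one has the explicit descriptions $\sigma(w) = \bar{\ell}_1 \bar{\ell}_2 \cdots \bar{\ell}_n$ with $\bar{x}=y$, $\bar{y}=x$, and $\dagger(w) = \ell_n \cdots \ell_2 \ell_1$, and $\tau(w) = \bar{\ell}_n \cdots \bar{\ell}_2 \bar{\ell}_1$. Using Definition~\ref{def:AABB} one computes directly, e.g.
\begin{align*}
\sigma(A^*_L w) &= \sigma(\ell_2 \cdots \ell_n)\, \delta_{\ell_1, x} = \bar{\ell}_2 \cdots \bar{\ell}_n\, \delta_{\bar{\ell}_1, y} = B^*_L(\sigma w), \\
\dagger(A^*_L w) &= \dagger(\ell_2 \cdots \ell_n)\, \delta_{\ell_1, x} = \ell_n \cdots \ell_2\, \delta_{\ell_1, x} = A^*_R(\ell_n \cdots \ell_1) = A^*_R(\dagger w),
\end{align*}
and the other base cases are analogous.

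The main obstacle is purely bookkeeping rather than any genuine mathematical difficulty: one must be careful to handle the $n=1$ case (where $\ell_2 \cdots \ell_n = \mathbf{1}$) consistently with the trivial-word convention, and to track which index the Kronecker delta picks out after reversal under $\dagger$ or $\tau$. Both complications are resolved by noting that $\dagger$ converts ``first letter'' into ``last letter'' and that $\sigma$ converts $\delta_{\ell, x}$ into $\delta_{\bar{\ell}, y}$. Once these translations are in place, every required equality reduces to the corresponding defining formula in Definition~\ref{def:AABB}.
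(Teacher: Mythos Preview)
Your proposal is correct and is essentially the same approach as the paper, which simply records the proof as ``Routine.'' You have spelled out the direct verification on the standard basis in more detail than the paper does, including the helpful reductions via the involutions $\sigma^2=\dagger^2={\rm id}$ and the factorization $\tau=\sigma\dagger$, but the underlying argument is identical.
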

\begin{proof} Routine.
\end{proof}

 \section{The $q$-shuffle algebra $\mathbb V$}
 
 \noindent In the previous sections we discussed the free algebra $\mathbb V$. There is
another algebra structure on $\mathbb V$,
called the $q$-shuffle algebra.
This algebra was introduced by Rosso
\cite{rosso1, rosso} and described further by Green
\cite{green}. We will adopt the approach of 
\cite{green}, which is suited to our purpose.
The $q$-shuffle product 
is denoted by $\star$. To describe this product, we first consider some special cases.
We have ${\bf 1} \star v = v \star {\bf 1} = v$ for $v \in \mathbb V$.
 For
letters $u,v$ we have $
u \star v = uv + vu q^{(u,v)}$, where
\bigskip

\centerline{
\begin{tabular}[t]{c|cc}
$(\,,\,)$ & $x$ & $y$
   \\  \hline
   $x$ &
   $2$ & $-2$
     \\
     $y$ &
      $-2$ & $2$
         \\
              \end{tabular}
              }
              \medskip

\noindent
 Thus
   \begin{align*}
    &x \star x = (1+q^2) xx,
 \qquad \qquad \quad
   x \star y = xy + q^{-2}yx,
%\label{eq:xyprod}
\\
&y \star x = yx+ q^{-2}xy, \qquad \qquad \quad
    y\star y = (1+q^2)yy.
%\label{eq:xxprod}
\end{align*}
\noindent For a letter $u$ and
  a nontrivial word $v= v_1v_2\cdots v_n$ in $\mathbb V$,
  \begin{align}
  \label{eq:Xcv2}
  &u \star v =
  \sum_{i=0}^n v_1 \cdots v_{i} u v_{i+1} \cdots v_n
  q^{
  ( v_1, u)+
  ( v_2, u)+
  \cdots + ( v_{i}, u)},
  \\
  &v \star u = \sum_{i=0}^n v_1 \cdots v_{i} u v_{i+1} \cdots v_n
  q^{
  ( v_{n},u)
  +
  ( v_{n-1},u)
  +
  \cdots
  +
  (v_{i+1},u)
  }.
 \label{eq:vcX2}
  \end{align}
   For example
  \begin{align*}
     & y\star (xxx)= yxxx+ q^{-2} xyxx+ q^{-4} xxyx+q^{-6}xxxy,
    \\
    &(xxx)\star y = q^{-6}yxxx+ q^{-4} xyxx+ q^{-2} xxyx+xxxy.
      \end{align*}
\noindent For nontrivial words $u=u_1u_2\cdots u_r$
and $v=v_1v_2\cdots v_s$ in $\mathbb V$,
\begin{align*}
%%\label{eq:uvcirc}
&u \star v  = u_1\bigl((u_2\cdots u_r) \star v\bigr)
+ v_1\bigl(u \star (v_2 \cdots v_s)\bigr)
q^{
( u_1, v_1) +
( u_2, v_1) +
\cdots
+
(u_r, v_1)},
\\
%%\label{eq:uvcirc2}
&u\star v =
\bigl(u \star (v_1 \cdots v_{s-1})\bigr)v_s +
\bigl((u_1 \cdots u_{r-1}) \star v\bigr)u_r
q^{
( u_r, v_1) +
(u_r, v_2) + \cdots +
(u_r, v_s)
}.
\end{align*}
%The map $\zeta$ is an antiautomorphism of the $q$-shuffle algebra $\mathbb V$
%as well as the free algebra $\mathbb V$.
 For example, assume $r=2$ and $s=2$. Then
\begin{align*}
 u \star v &= 
               u_1 u_2 v_1 v_2 
            +
               u_1 v_1 u_2 v_2  q^{( u_2, v_1)}
	     + 
      u_1 v_1 v_2 u_2  q^{( u_2, v_1)+ ( u_2,v_2)}
	  + 
      v_1 u_1 u_2 v_2  q^{(u_1, v_1)+ ( u_2,v_1)}
             \\
	     &\quad + 
      v_1 u_1 v_2 u_2  q^{( u_1, v_1)+ 
       (u_2,v_1)+( u_2,v_2)}
	     + 
      v_1 v_2 u_1 u_2  q^{( u_1, v_1)+ 
       ( u_1,v_2)+(u_2,v_1)+ ( u_2,v_2)}.
\end{align*}
\noindent The map $\sigma$ from Lemma \ref{lem:faut} is an automorphism of the $q$-shuffle algebra $\mathbb V$.
The map $\dagger$ from Lemma \ref{lem:fantiaut} is an antiautomorphism of the $q$-shuffle algebra $\mathbb V$.
The map $\tau$ from Lemma \ref{def:ftauA} is an antiautomorphism of the $q$-shuffle algebra $\mathbb V$.
Above Example \ref{ex:v2e} 
 we mentioned an $\mathbb N^2$-grading of the free algebra $\mathbb V$. This is also an $\mathbb N^2$-grading for the $q$-shuffle algebra $\mathbb V$.
 \medskip
 
 \noindent 
See \cite{grosse, leclerc,negut, boxq, catalan, alternating, altCE} for more information about the $q$-shuffle algebra $\mathbb V$.

\section{The maps $A_\ell$, $B_\ell$, $A_r$, $B_r$}

\noindent In this section we recall from \cite{boxq} some maps $A_\ell$, $B_\ell$, $A_r$, $B_r$ in ${\rm End}(\mathbb V)$ that will be used in our main results.

\begin{definition}
\label{def:Aell} 
\rm (See \cite[Definition~7.1]{boxq}.) 
Define the maps  $A_\ell$, $B_\ell$, $A_r$, $B_r$ in ${\rm End}(\mathbb V)$ as follows.
For $v \in \mathbb V$,
\begin{align*}
&
A_{\ell} v = x\star v, \quad \qquad
B_{\ell} v = y\star v, \qquad \quad
A_{r} v = v \star x, \quad \qquad
B_{r} v = v \star y. 
\end{align*}
\end{definition}

\begin{example}\rm The maps    $A_\ell$, $B_\ell$, $A_r$, $B_r$      are illustrated in the table below.
\bigskip

\centerline{
\begin{tabular}[t]{c|cccc}
$v$ & $\bf 1$ & $x$ & $y$ & $xy$  \\
    \hline  
   $A_\ell v$ & $x$ & $q \lbrack 2 \rbrack_q xx$ & $xy+q^{-2}yx$ & $q\lbrack 2 \rbrack_q xxy+xyx$   \\
   $B_\ell v$ & $y$ & $q^{-2}xy+yx$ & $q \lbrack 2 \rbrack_q yy$ & $q^{-1}\lbrack 2 \rbrack_q xyy+yxy$   \\
   $A_r v$ & $x$ & $q \lbrack 2 \rbrack_q xx$ & $q^{-2}xy+yx$ & $q^{-1} \lbrack 2 \rbrack_q xxy + xyx$   \\
   $B_r v$ & $y$ & $xy+q^{-2}yx$ & $q \lbrack 2 \rbrack_q yy$ & $q \lbrack 2 \rbrack_q  xyy+ yxy$ 
         \\
              \end{tabular}
              }
              \bigskip

\noindent
\end{example}

\begin{lemma} \label{lem:ddown} For $(r,s)\in \mathbb N^2$ we have
\begin{align*} 
&A_\ell \mathbb V(r,s) \subseteq \mathbb V(r+1,s), \qquad \qquad B_\ell \mathbb V(r,s) \subseteq \mathbb V(r,s+1),
\\
&A_r \mathbb V(r,s) \subseteq \mathbb V(r+1,s), \qquad \qquad B_r \mathbb V(r,s) \subseteq \mathbb V(r,s+1).
\end{align*}
\end{lemma}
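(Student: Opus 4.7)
The plan is to reduce the four inclusions to the fact, recorded at the end of Section 7, that the $\mathbb N^2$-grading $\mathbb V = \bigoplus_{(r,s)\in \mathbb N^2}\mathbb V(r,s)$ of the free algebra is also an $\mathbb N^2$-grading of the $q$-shuffle algebra. Once that is granted, the $q$-shuffle product satisfies $\mathbb V(a,b) \star \mathbb V(c,d) \subseteq \mathbb V(a+c,b+d)$. Since $x \in \mathbb V(1,0)$ and $y \in \mathbb V(0,1)$, Definition \ref{def:Aell} immediately yields all four inclusions: $A_\ell$ and $A_r$ raise the $x$-degree by one and preserve the $y$-degree, while $B_\ell$ and $B_r$ preserve the $x$-degree and raise the $y$-degree by one.

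Alternatively, one can verify the inclusions directly from the explicit formulas \eqref{eq:Xcv2} and \eqref{eq:vcX2}. It suffices to check the statement on a basis, so fix a word $v = v_1 v_2 \cdots v_n \in \mathbb V(r,s)$ (the case $v = \mathbf 1$ being trivial). By \eqref{eq:Xcv2},
\begin{align*}
A_\ell v \;=\; x \star v \;=\; \sum_{i=0}^{n} v_1 \cdots v_i\, x\, v_{i+1} \cdots v_n \; q^{(v_1,x)+\cdots+(v_i,x)} .
\end{align*}
Each summand is a word obtained by inserting one additional letter $x$ into $v$, hence has $x$-degree $r+1$ and $y$-degree $s$, so lies in $\mathbb V(r+1,s)$. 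The same argument applied with $y$ in place of $x$, and with \eqref{eq:vcX2} in place of \eqref{eq:Xcv2}, handles $B_\ell$, $A_r$, $B_r$.

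There is no genuine obstacle here; the only thing to notice is that the coefficients $q^{(\cdot,\cdot)}$ are scalars and therefore do not affect the $\mathbb N^2$-bidegree of the words produced by shuffling a single letter into $v$.
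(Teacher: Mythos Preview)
Your proof is correct and follows the same approach as the paper, which simply cites Definition~\ref{def:Aell} together with the description of $\mathbb V(r,s)$ above Example~\ref{ex:v2e}; your two arguments (via the grading compatibility stated at the end of Section~7, and via the explicit shuffle formulas) are just more detailed renderings of that one-line proof.
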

\begin{proof} By Definition \ref{def:Aell} and the description of $\mathbb V(r,s)$ above Example \ref{ex:v2e}.
\end{proof}

\noindent Next we describe how the maps $X$, $Y$ are related to the maps  $A_\ell$, $B_\ell$, $A_r$, $B_r$.

\begin{lemma} \label{lem:XYAABB2}
We have
\begin{align*}
&X A_\ell= qA_\ell X, \qquad 
X B_\ell = B_\ell X, \qquad 
X A_r = qA_r X, \qquad 
X B_r= B_r X, \\
&YA_\ell =A_\ell Y, \qquad 
Y B_\ell = qB_\ell Y, \qquad 
Y A_r = A_r Y, \qquad 
Y B_r= qB_r Y.
\end{align*}
\end{lemma}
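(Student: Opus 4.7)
The plan is to deduce all eight identities from the fact that $X$ and $Y$ act diagonally on the homogeneous components $\mathbb V(r,s)$, together with the fact that $A_\ell, A_r$ and $B_\ell, B_r$ are degree-raising of a known type. Concretely, by Lemma \ref{lem:xyE}, the map $X$ acts on $\mathbb V(r,s)$ as $q^r I$ and $Y$ acts as $q^s I$; by Lemma \ref{lem:ddown}, $A_\ell$ and $A_r$ send $\mathbb V(r,s)$ into $\mathbb V(r+1,s)$, while $B_\ell$ and $B_r$ send $\mathbb V(r,s)$ into $\mathbb V(r,s+1)$. Since the decomposition $\mathbb V=\bigoplus_{(r,s)\in\mathbb N^2}\mathbb V(r,s)$ is direct, it suffices to verify each identity on an arbitrary $v\in\mathbb V(r,s)$.

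First I would handle the four identities involving $X$. Fix $v\in \mathbb V(r,s)$. For $XA_\ell=qA_\ell X$: since $A_\ell v\in \mathbb V(r+1,s)$, we have $XA_\ell v=q^{r+1}A_\ell v$; on the other hand $A_\ell Xv=A_\ell(q^r v)=q^r A_\ell v$, so $XA_\ell v=q\cdot A_\ell Xv$. The identity $XA_r=qA_r X$ is identical, replacing $A_\ell$ by $A_r$. For $XB_\ell=B_\ell X$: $B_\ell v\in\mathbb V(r,s+1)$, so $XB_\ell v=q^r B_\ell v=B_\ell(q^r v)=B_\ell X v$; likewise for $XB_r=B_r X$.

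The four identities involving $Y$ are completely analogous, now using that $Y$ acts on $\mathbb V(r,s)$ as $q^s I$. Thus $YA_\ell v = q^s A_\ell v = A_\ell(q^s v)=A_\ell Y v$ (and similarly for $YA_r=A_r Y$), while $YB_\ell v=q^{s+1}B_\ell v=q\cdot B_\ell(q^s v)=qB_\ell Y v$ (and similarly for $YB_r=qB_r Y$).

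There is no real obstacle here: the only slightly delicate point is making sure the correct degree (the $x$-degree for $X$, the $y$-degree for $Y$) increases under the correct operator, which is exactly the content of Lemma \ref{lem:ddown}. Since both the grading and the degree-shift properties have already been established, the proof reduces to the elementary scalar bookkeeping above, performed on each homogeneous component and extended by linearity to all of $\mathbb V$.
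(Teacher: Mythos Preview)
Your proof is correct and follows the same approach as the paper: the paper's own proof simply cites Lemmas \ref{lem:xyE} and \ref{lem:ddown}, which is precisely the diagonal-action-plus-degree-shift argument you carry out in detail.
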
 
\begin{proof} By Lemmas \ref{lem:xyE},  \ref{lem:ddown}.
\end{proof}

\begin{lemma} \label{lem:AAAABB2}
The following diagrams commute:
  \begin{equation*}
    {\begin{CD}
\mathbb V @>A_{\ell} >>
               {\mathbb V}
              \\
         @V \sigma VV                   @VV \sigma V \\
         \mathbb V @>>B_{\ell} >
                                  {\mathbb  V}
                        \end{CD}} 
                        \qquad \qquad
                        {\begin{CD}
\mathbb V @>B_{\ell} >>
               {\mathbb V}
              \\
         @V \sigma VV                   @VV \sigma V \\
         \mathbb V @>>A_{\ell} >
                                  {\mathbb  V}
                        \end{CD}} \qquad \qquad
{\begin{CD}
\mathbb V @>A_r  >>
               {\mathbb V}
              \\
         @V \sigma VV                   @VV \sigma V \\
         \mathbb V @>>B_r >
                                  {\mathbb  V}
                        \end{CD}} \qquad \qquad
{\begin{CD}
\mathbb V @>B_r  >>
               {\mathbb V}
              \\
         @V \sigma VV                   @VV \sigma V \\
         \mathbb V @>>A_r >
                                  {\mathbb  V}
                        \end{CD}} \qquad 
                                    \end{equation*}
  \begin{equation*}
    {\begin{CD}
\mathbb V @>A_{\ell} >>
               {\mathbb V}
              \\
         @V \dagger VV                   @VV \dagger V \\
         \mathbb V @>>A_r >
                                  {\mathbb  V}
                        \end{CD}} 
                        \qquad \qquad
                        {\begin{CD}
\mathbb V @>B_{\ell} >>
               {\mathbb V}
              \\
         @V \dagger VV                   @VV \dagger V \\
         \mathbb V @>>B_r >
                                  {\mathbb  V}
                        \end{CD}} \qquad \qquad 
{\begin{CD}
\mathbb V @>A_r  >>
               {\mathbb V}
              \\
         @V \dagger VV                   @VV \dagger V \\
         \mathbb V @>>A_{\ell} >
                                  {\mathbb  V}
                        \end{CD}} \qquad \qquad
{\begin{CD}
\mathbb V @>B_r  >>
               {\mathbb V}
              \\
         @V \dagger VV                   @VV\dagger V \\
         \mathbb V @>>B_{\ell} >
                                  {\mathbb  V}
                        \end{CD}} \qquad                                     \end{equation*}
  \begin{equation*}
      {\begin{CD}
\mathbb V @>A_{\ell} >>
               {\mathbb V}
              \\
         @V \tau VV                   @VV \tau V \\
         \mathbb V @>>B_r >
                                  {\mathbb  V}
                        \end{CD}} 
                        \qquad \qquad
                        {\begin{CD}
\mathbb V @>B_{\ell} >>
               {\mathbb V}
              \\
         @V \tau VV                   @VV \tau V \\
         \mathbb V @>>A_r >
                                  {\mathbb  V}
                        \end{CD}} \qquad \qquad 
{\begin{CD}
\mathbb V @>A_r  >>
               {\mathbb V}
              \\
         @V \tau VV                   @VV \tau V \\
         \mathbb V @>>B_{\ell} >
                                  {\mathbb  V}
                        \end{CD}} \qquad \qquad
{\begin{CD}
\mathbb V @>B_r  >>
               {\mathbb V}
              \\
         @V \tau VV                   @VV\tau V \\
         \mathbb V @>>A_{\ell} >
                                  {\mathbb  V}
                        \end{CD}} \qquad 
                                    \end{equation*}
                           
\end{lemma}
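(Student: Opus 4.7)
The plan is to deduce all twelve commuting squares in one stroke from two facts already available in Section 7: namely that $\sigma$ is an automorphism of the $q$-shuffle algebra $\mathbb V$ sending $x \leftrightarrow y$, while $\dagger$ and $\tau$ are antiautomorphisms of the $q$-shuffle algebra, with $\dagger$ fixing the letters and $\tau$ swapping them. Since $A_\ell, B_\ell, A_r, B_r$ are defined by left and right $\star$-multiplication by the letters (Definition \ref{def:Aell}), the statements will reduce to evaluating these (anti)automorphisms on an expression of the form $x \star v$ or $v \star y$.

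Concretely, I would proceed row by row. For the $\sigma$-row, for any $v\in \mathbb V$ compute
\[
\sigma(A_\ell v) = \sigma(x\star v) = \sigma(x)\star \sigma(v) = y\star \sigma(v) = B_\ell\,\sigma(v),
\]
and similarly $\sigma B_\ell = A_\ell \sigma$, $\sigma A_r = B_r\sigma$, $\sigma B_r = A_r\sigma$; here one uses only that $\sigma$ is an algebra homomorphism that swaps $x$ and $y$. For the $\dagger$-row, the fact that $\dagger$ reverses the order of $\star$-multiplication is the key point: since $\dagger$ fixes $x$ and $y$,
\[
\dagger(A_\ell v) = \dagger(x\star v) = \dagger(v)\star \dagger(x) = \dagger(v)\star x = A_r\,\dagger(v),
\]
and the remaining three cases $\dagger B_\ell = B_r \dagger$, $\dagger A_r = A_\ell \dagger$, $\dagger B_r = B_\ell\dagger$ are identical except for a choice of letter or of side. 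For the $\tau$-row, use that $\tau$ is an antiautomorphism that swaps $x\leftrightarrow y$; then for instance
\[
\tau(A_\ell v) = \tau(x\star v) = \tau(v)\star \tau(x) = \tau(v)\star y = B_r\,\tau(v),
\]
and similarly for the remaining three squares. Each of the twelve identities is a one-line verification.

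There is no genuine obstacle, which is why the paper marks the proof as routine; the only care needed is in tracking two bookkeeping distinctions — (a) whether the map is an automorphism ($\sigma$) or an antiautomorphism ($\dagger,\tau$), which determines whether left-multiplication is exchanged with right-multiplication, and (b) whether the letters are fixed ($\dagger$) or swapped ($\sigma,\tau$), which determines whether $A$ is exchanged with $B$. Reading off these two bits in each of the twelve cases produces exactly the pattern displayed in the lemma. Finally, consistency with $\tau = \sigma\circ \dagger$ can be checked as a sanity verification by composing the $\sigma$- and $\dagger$-squares, which will reproduce the $\tau$-squares.
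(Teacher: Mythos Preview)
Your proposal is correct and is precisely the routine verification the paper has in mind: the twelve squares all follow from the facts (recorded at the end of Section~7) that $\sigma$ is a $q$-shuffle algebra automorphism swapping $x\leftrightarrow y$, while $\dagger$ and $\tau$ are $q$-shuffle algebra antiautomorphisms fixing and swapping the letters, respectively, applied to Definition~\ref{def:Aell}.
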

\begin{proof} Routine.
\end{proof}

 \section{The subspace $\mathbb U$}
   \noindent In this section we discuss a subspace $\mathbb U \subseteq \mathbb V$ that will be used in our main results.
   
\begin{definition}\label{def:Usub} \rm Let $\mathbb U$ denote the subalgebra of the $q$-shuffle algebra $\mathbb V$ that is generated by $x,y$. 
%By consruction
%$U$ is the image of $U^+_q$ under $\natural$.
\end{definition}
 \noindent 
The algebra $\mathbb U$ is described as follows.
By
\cite[Theorem~13]{rosso1} or
\cite[p.~10]{green},
\begin{align*}
&
x \star x \star x \star y -
\lbrack 3 \rbrack_q
x \star x\star y \star x +
\lbrack 3 \rbrack_q
x \star y \star x \star x -
y \star x \star x \star x  = 0,
%%\label{eq:qsc1}
\\
&
y \star y \star y \star x -
\lbrack 3 \rbrack_q
y \star y \star x \star y +
\lbrack 3 \rbrack_q
y \star x \star y \star y -
x \star y \star y \star y  = 0.
%%\label{eq:qsc2}
\end{align*}
So in the $q$-shuffle algebra $\mathbb V$ the elements
$x,y$ satisfy the
$q$-Serre relations.
Therefore, there exists an algebra homomorphism
$\natural$ from $U^+_q$ to the $q$-shuffle algebra $\mathbb V$,
that sends $A\mapsto x$ and $B\mapsto y$.
The map $\natural$ has image $\mathbb U$
by Definition
\ref{def:Usub}, 
and is injective by
  \cite[Theorem~15]{rosso}. 
Consequently $\natural: U^+_q \to \mathbb U$ is an algebra isomorphism. 
%%See \cite{grosse, leclerc,negut, boxq} for more information about the $q$-shuffle algebra $\mathbb V$ and its relationship to $U^+_q$.
By construction  the following diagrams commute:

\begin{equation}
{\begin{CD}
U^+_q @>\natural  >>
               {\mathbb V}
              \\
         @V \sigma VV                   @VV \sigma V \\
         U^+_q @>>\natural >
                                  {\mathbb  V}
                        \end{CD}} \qquad \qquad
{\begin{CD}
U^+_q @>\natural  >>
               {\mathbb V}
              \\
         @V \dagger VV                   @VV \dagger V \\
         U^+_q @>>\natural >
                                  {\mathbb  V}
                        \end{CD}} \qquad \qquad
                        {\begin{CD}
U^+_q @>\natural  >>
               {\mathbb V}
              \\
         @V \tau VV                   @VV \tau V \\
         U^+_q @>>\natural >
                                  {\mathbb  V}
                        \end{CD}} \qquad 
                        \label{eq:threeDiag}
                                    \end{equation}
\noindent Consequently $\mathbb U$ is invariant under each of $\sigma$, $\dagger$, $\tau$.
Earlier we mentioned an $\mathbb N^2$-grading for 
both the algebra $U^+_q$ and the $q$-shuffle algebra $\mathbb V$.
These gradings are related as follows.
The algebra $\mathbb U$ has an $\mathbb N^2$-grading inherited from
$U^+_q$ via $\natural$. With respect to this grading,
for
$(r,s)\in \mathbb N^2$ the $(r,s)$-homogeneous component of
$\mathbb U$ is the 
$\natural$-image of the $(r,s)$-homogeneous component
of $U^+_q$. We denote this homogeneous component by $\mathbb U(r,s)$. By construction,
\begin{align}
\label{eq:VUU}
\mathbb U(r,s)=
\mathbb V(r,s) \cap \mathbb U, \qquad \qquad (r,s) \in \mathbb N^2.
\end{align}
By construction $\mathbb U(0,0)=\mathbb F {\bf 1}$. 
By construction, the sum $\mathbb U = \sum_{(r,s)\in \mathbb N^2} \mathbb U(r,s)$ is direct.
By Lemma \ref{lem:gfRS} and the construction,
\begin{align}
\label{eq:dimu}
d_{r,s} = {\rm dim}\,\mathbb U(r,s), \qquad \qquad (r,s) \in \mathbb N^2.
\end{align}

\begin{lemma} \label{lem:XYKU} For $(r,s) \in \mathbb N^2$ the following hold on $\mathbb U(r,s)$:
\begin{align*}
X = q^r I, \qquad \quad Y = q^s I, \qquad \quad K=q^{2r-2s}I.
\end{align*}
\end{lemma}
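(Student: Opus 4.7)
The plan is to observe that this lemma is an immediate consequence of results already established for the ambient free algebra $\mathbb V$, combined with the compatibility \eqref{eq:VUU} between the gradings on $\mathbb U$ and $\mathbb V$.

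More precisely, I would proceed as follows. Fix $(r,s) \in \mathbb N^2$. By \eqref{eq:VUU} we have $\mathbb U(r,s) = \mathbb V(r,s) \cap \mathbb U$, so in particular $\mathbb U(r,s) \subseteq \mathbb V(r,s)$. Now invoke Lemma \ref{lem:xyE}, which asserts that $X$ acts as $q^r I$ on $\mathbb V(r,s)$ and $Y$ acts as $q^s I$ on $\mathbb V(r,s)$. Restricting these scalar actions to the subspace $\mathbb U(r,s)$ gives the first two assertions. For the third assertion, recall from Definition \ref{def:k} that $K = X^2 Y^{-2}$; alternatively, invoke Lemma \ref{lem:k} directly, which gives $K = q^{2r-2s} I$ on $\mathbb V(r,s)$, and restrict.

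There is no genuine obstacle here: all the work has been done in Lemmas \ref{lem:xyE} and \ref{lem:k}, and the only new ingredient is the inclusion $\mathbb U(r,s) \subseteq \mathbb V(r,s)$ coming from \eqref{eq:VUU}. The lemma is essentially a bookkeeping step recording, in a convenient form, that the maps $X$, $Y$, $K$ respect the grading of $\mathbb U$ inherited from $U^+_q$ and act on each homogeneous component as an explicit scalar.
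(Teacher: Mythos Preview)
Your proof is correct and follows essentially the same approach as the paper: invoke Lemmas \ref{lem:xyE} and \ref{lem:k} together with the inclusion $\mathbb U(r,s) \subseteq \mathbb V(r,s)$ from \eqref{eq:VUU}.
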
 
\begin{proof} By Lemmas \ref{lem:xyE}, \ref{lem:k} and since $\mathbb U(r,s) \subseteq \mathbb V(r,s)$.
\end{proof}

\begin{lemma} \label{lem:Uinv} The vector space $\mathbb U$
is invariant under each of
\begin{align*}
X^{\pm 1}, \qquad Y^{\pm 1}, \qquad K^{\pm 1}.
\end{align*}
\end{lemma}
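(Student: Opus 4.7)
The plan is to reduce the invariance claim to the scalar action established in Lemma \ref{lem:XYKU}, using the homogeneous decomposition $\mathbb U = \sum_{(r,s)\in\mathbb N^2}\mathbb U(r,s)$ noted just before that lemma. The intuition is simple: since $X$, $Y$, $K$ act as nonzero scalars on each homogeneous piece $\mathbb U(r,s)$, they map $\mathbb U(r,s)$ into itself, and hence $\mathbb U$ into itself; invertibility of the scalars then gives the analogous statement for the inverses.

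More precisely, I would first invoke Lemma \ref{lem:XYKU} to see that for $(r,s) \in \mathbb N^2$, the maps $X$, $Y$, $K$ restrict to the scalar operators $q^r I$, $q^s I$, $q^{2r-2s}I$ on $\mathbb U(r,s)$. Because $q \neq 0$, each of these scalars is nonzero, so $X$, $Y$, $K$ are invertible on $\mathbb U(r,s)$ with inverses acting as $q^{-r}I$, $q^{-s}I$, $q^{2s-2r}I$ respectively. In particular,
\begin{align*}
X^{\pm 1}\mathbb U(r,s) = \mathbb U(r,s), \qquad Y^{\pm 1}\mathbb U(r,s) = \mathbb U(r,s), \qquad K^{\pm 1}\mathbb U(r,s) = \mathbb U(r,s).
\end{align*}

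Next I would combine these componentwise inclusions with the direct sum $\mathbb U = \sum_{(r,s)\in \mathbb N^2} \mathbb U(r,s)$ to conclude that $X^{\pm 1}\mathbb U \subseteq \mathbb U$, and similarly for $Y^{\pm 1}$ and $K^{\pm 1}$. This finishes the proof. There is no genuine obstacle here; the entire argument is essentially a one-line consequence of Lemma \ref{lem:XYKU} plus the fact that $\mathbb U$ is the sum of its own homogeneous components, so the only thing to be careful about is citing the correct scalar action and noting that nonzero scalars act invertibly.
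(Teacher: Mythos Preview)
Your proof is correct and follows exactly the paper's approach: the paper's proof is the single line ``By Lemma \ref{lem:XYKU} and since $\mathbb U=\sum_{(r,s)\in \mathbb N^2} \mathbb U(r,s)$,'' which is precisely the scalar-action-on-homogeneous-components argument you spelled out in more detail.
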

\begin{proof} By Lemma   \ref{lem:XYKU}  and since $\mathbb U=\sum_{(r,s)\in \mathbb N^2} \mathbb U(r,s)$.   \end{proof}

\begin{lemma} \label{lem:Uinv2} {\rm (See \cite[Proposition~9.1]{boxq}.)} The vector space $\mathbb U$
is invariant under each of
\begin{align*}
 A^*_L, \qquad B^*_L, \qquad A^*_R, \qquad B^*_R.
\end{align*}
\end{lemma}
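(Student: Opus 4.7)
The plan is to establish four ``$K$-twisted Leibniz'' identities that relate the maps $A^*_L, B^*_L, A^*_R, B^*_R$ to the $q$-shuffle product, then use them to run a routine induction on $\star$-generation. Specifically, I would prove that for all $u, v \in \mathbb{V}$,
\begin{align*}
A^*_L(u \star v) &= A^*_L(u) \star v + K(u) \star A^*_L(v), \\
B^*_L(u \star v) &= B^*_L(u) \star v + K^{-1}(u) \star B^*_L(v), \\
A^*_R(u \star v) &= A^*_R(u) \star K(v) + u \star A^*_R(v), \\
B^*_R(u \star v) &= B^*_R(u) \star K^{-1}(v) + u \star B^*_R(v).
\end{align*}

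By linearity it suffices to treat the case where $u = u_1 \cdots u_r$ and $v = v_1 \cdots v_s$ are words. I would apply $A^*_L$ to the left-recursion formula
\[
u \star v = u_1 \bigl((u_2 \cdots u_r) \star v\bigr) + v_1 \bigl(u \star (v_2 \cdots v_s)\bigr) q^{(u_1, v_1) + \cdots + (u_r, v_1)}
\]
displayed in Section~7. The first summand contributes $A^*_L(u) \star v$ (pulling out $\delta_{u_1, x}$), while the second contributes $u \star A^*_L(v)$ times $\delta_{v_1, x} \cdot q^{(u_1, v_1) + \cdots + (u_r, v_1)}$. When $v_1 = x$ the exponent becomes $2r' - 2s'$, where $(r', s')$ is the bidegree of $u$, and this is exactly the scalar by which $K$ acts on $u$ (Lemma~\ref{lem:k}); when $v_1 \neq x$ both sides vanish. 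The three remaining identities follow in the same fashion, using the right-recursion formula for $A^*_R$ and $B^*_R$.

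With the twisted Leibniz rules in hand, I would prove $A^*_L(\mathbb{U}) \subseteq \mathbb{U}$ by induction on the number of $\star$-factors in a generating expression. The base case is $A^*_L(\mathbf{1}) = 0$, $A^*_L(x) = \mathbf{1}$, $A^*_L(y) = 0$, all in $\mathbb{U}$. For the inductive step, if $u, v \in \mathbb{U}$ with $A^*_L(u), A^*_L(v) \in \mathbb{U}$, then $K(u) \in \mathbb{U}$ by Lemma~\ref{lem:Uinv}, and since $\mathbb{U}$ is closed under $\star$, both $A^*_L(u) \star v$ and $K(u) \star A^*_L(v)$ lie in $\mathbb{U}$. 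The remaining three inclusions follow either by the same induction with the appropriate twisted Leibniz rule, or more economically by symmetry: the diagrams \eqref{eq:threeDiag} show that $\mathbb{U}$ is stable under $\sigma, \dagger, \tau$, while Lemma~\ref{lem:AAAABB} identifies $B^*_L = \sigma A^*_L \sigma$, $A^*_R = \dagger A^*_L \dagger$, and $B^*_R = \tau A^*_L \tau$.

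The main obstacle is the bookkeeping for the $q$-exponent in the twisted Leibniz rule: one must verify that the factor $q^{(u_1, v_1) + \cdots + (u_r, v_1)}$ produced by the shuffle recursion agrees precisely with the scalar $K^{\pm 1}$ acts by on the homogeneous component of $u$ once $v_1$ is specialized to $x$ or $y$. After that verification the induction and the symmetry reductions are essentially automatic.
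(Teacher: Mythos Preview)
Your argument is correct: the twisted Leibniz identities you state do hold (and your derivation of the $A^*_L$ case is sound, with the other three following by the same mechanism or by the $\sigma,\dagger,\tau$ symmetries), and the induction on $\star$-generation then gives the invariance of $\mathbb U$ immediately. Note that the paper itself does not supply a proof of this lemma; it simply cites \cite[Proposition~9.1]{boxq}. Your twisted-derivation approach is the standard one for this result and is presumably what the cited reference does, so you have effectively reconstructed the deferred proof.
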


\noindent For notational convenience, define $\mathbb U(r,-1)=0$ and $\mathbb U(-1,s)=0$ for $r,s \in \mathbb N$.
\begin{lemma} \label{lem:down2} For $(r,s)\in \mathbb N^2$ we have
\begin{align*} 
&A^*_L \mathbb U(r,s) \subseteq \mathbb U(r-1,s), \qquad \qquad B^*_L \mathbb U(r,s) \subseteq \mathbb U(r,s-1),
\\
&A^*_R \mathbb U(r,s) \subseteq  \mathbb U(r-1,s), \qquad \qquad B^*_R \mathbb U(r,s) \subseteq \mathbb U(r,s-1).
\end{align*}
\end{lemma}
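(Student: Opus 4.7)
The plan is to assemble the conclusion from three already-established facts: the grading compatibility of the four maps on the ambient free algebra $\mathbb V$ (Lemma \ref{lem:down}), the invariance of $\mathbb U$ under each of $A^*_L, B^*_L, A^*_R, B^*_R$ (Lemma \ref{lem:Uinv2}), and the intersection identity $\mathbb U(r,s)=\mathbb V(r,s)\cap \mathbb U$ from \eqref{eq:VUU}. No new calculation is required.

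Concretely, I would fix $(r,s)\in \mathbb N^2$ and treat $A^*_L$ first. Since $\mathbb U(r,s)\subseteq \mathbb V(r,s)$, Lemma \ref{lem:down} gives $A^*_L \mathbb U(r,s)\subseteq \mathbb V(r-1,s)$. On the other hand, $\mathbb U(r,s)\subseteq \mathbb U$ and Lemma \ref{lem:Uinv2} yields $A^*_L\mathbb U(r,s)\subseteq \mathbb U$. Intersecting and applying \eqref{eq:VUU} produces
\[
A^*_L\mathbb U(r,s)\subseteq \mathbb V(r-1,s)\cap \mathbb U=\mathbb U(r-1,s).
\]
The remaining three inclusions follow by exactly the same argument, reading off the correct $\mathbb V$-grading shift from Lemma \ref{lem:down} in each case ($B^*_L$ and $B^*_R$ drop the $y$-degree by one, while $A^*_R$ drops the $x$-degree by one).

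There is no real obstacle here: the substantive work was done in Lemma \ref{lem:Uinv2}, which required the $q$-shuffle analysis from \cite{boxq} to show that the one-letter truncation operators preserve the subalgebra generated by $x,y$ under $\star$. Once that invariance is in hand, the present lemma is just the observation that truncation also respects the $\mathbb N^2$-grading, and the boundary conventions $\mathbb U(r,-1)=0$ and $\mathbb U(-1,s)=0$ make the statement uniform for $r=0$ or $s=0$.
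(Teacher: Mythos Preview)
Your proof is correct and follows exactly the same approach as the paper, which also cites \eqref{eq:VUU} together with Lemmas \ref{lem:down} and \ref{lem:Uinv2}. Your explicit treatment of the intersection argument and the boundary cases $r=0$ or $s=0$ simply spells out what the paper leaves implicit.
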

\begin{proof} By \eqref{eq:VUU} and Lemmas \ref{lem:down}, \ref{lem:Uinv2}.
\end{proof}

\begin{lemma} \label{lem:obv} The subspace $\mathbb U$ is invariant under each of 
\begin{align*}
 A_\ell,\qquad B_\ell,\qquad  A_r, \qquad B_r.
\end{align*}
\end{lemma}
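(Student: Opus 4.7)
The plan is to observe that this lemma is essentially an immediate consequence of the definition of $\mathbb U$. By Definition~\ref{def:Usub}, $\mathbb U$ is a subalgebra of the $q$-shuffle algebra $\mathbb V$, so in particular $\mathbb U$ is closed under the $q$-shuffle product $\star$. Moreover, the generators $x$ and $y$ lie in $\mathbb U$ by construction.

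The four maps in question are simply left and right $\star$-multiplication by the generators: $A_\ell v = x \star v$, $B_\ell v = y \star v$, $A_r v = v \star x$, $B_r v = v \star y$, by Definition~\ref{def:Aell}. Therefore, for any $v \in \mathbb U$, each of the elements $x \star v$, $y \star v$, $v \star x$, $v \star y$ is a $\star$-product of two elements of $\mathbb U$, hence lies in $\mathbb U$. This gives $A_\ell v, B_\ell v, A_r v, B_r v \in \mathbb U$, as required.

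There is no real obstacle here; the entire content is bookkeeping. If one wanted to emphasize symmetry, one could observe further that it suffices to prove invariance under $A_\ell$ and $B_\ell$, since applying the antiautomorphism $\dagger$ of the $q$-shuffle algebra and using the commuting diagrams of Lemma~\ref{lem:AAAABB2} (together with the fact that $\mathbb U$ is $\dagger$-invariant, from~\eqref{eq:threeDiag}) yields invariance under $A_r$ and $B_r$. But no such reduction is needed: the proof is a direct appeal to the closure of $\mathbb U$ under $\star$.
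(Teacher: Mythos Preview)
Your proof is correct and follows essentially the same approach as the paper, which simply cites Definitions~\ref{def:Usub} and~\ref{def:Aell}. The remark about reducing via $\dagger$ is unnecessary, as you note, but not harmful.
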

\begin{proof} By Definitions \ref{def:Usub}, \ref{def:Aell}.
\end{proof}
\begin{lemma} \label{lem:ddown2} For $(r,s)\in \mathbb N^2$ we have
\begin{align*} 
&A_\ell \mathbb U(r,s) \subseteq \mathbb U(r+1,s), \qquad \qquad B_\ell \mathbb U(r,s) \subseteq \mathbb U(r,s+1),
\\
&A_r \mathbb U(r,s) \subseteq \mathbb U(r+1,s), \qquad \qquad B_r \mathbb U(r,s) \subseteq \mathbb U(r,s+1).
\end{align*}
\end{lemma}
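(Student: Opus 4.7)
The plan is to combine two facts already established in the excerpt: the grading-shift behavior of $A_\ell, B_\ell, A_r, B_r$ on $\mathbb V$ (Lemma \ref{lem:ddown}) and the invariance of $\mathbb U$ under these four maps (Lemma \ref{lem:obv}). The bridge between these is the identity $\mathbb U(r,s) = \mathbb V(r,s) \cap \mathbb U$ from \eqref{eq:VUU}.

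Concretely, I would fix $(r,s) \in \mathbb N^2$ and take an arbitrary $v \in \mathbb U(r,s)$. By \eqref{eq:VUU}, $v \in \mathbb V(r,s)$ and $v \in \mathbb U$. Applying Lemma \ref{lem:ddown} gives $A_\ell v \in \mathbb V(r+1,s)$, and applying Lemma \ref{lem:obv} gives $A_\ell v \in \mathbb U$. Intersecting and invoking \eqref{eq:VUU} one more time yields $A_\ell v \in \mathbb V(r+1,s) \cap \mathbb U = \mathbb U(r+1,s)$. This establishes the first containment. The other three containments are obtained by the identical argument with $A_\ell$ replaced in turn by $B_\ell$, $A_r$, $B_r$, and $(r+1,s)$ replaced by $(r,s+1)$, $(r+1,s)$, $(r,s+1)$ accordingly.

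There is essentially no obstacle here; the lemma is a direct corollary of the work already done. The only thing worth noting is that the boundary convention $\mathbb U(r,-1) = \mathbb U(-1,s) = 0$ declared just above the lemma is in fact not needed for this statement (since $A_\ell, B_\ell, A_r, B_r$ shift grades \emph{upward}), but it is recorded for symmetry with the analogous conventions used for $\mathbb V$ and for $\mathbb U$ in the context of Lemma \ref{lem:down2}.
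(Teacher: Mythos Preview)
Your proof is correct and follows exactly the paper's approach: the paper's proof reads simply ``By \eqref{eq:VUU} and Lemmas \ref{lem:ddown}, \ref{lem:obv},'' which is precisely the combination you spell out. Your added remark about the boundary convention is accurate but extraneous to the argument.
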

\begin{proof} By \eqref{eq:VUU} and Lemmas \ref{lem:ddown}, \ref{lem:obv}.
\end{proof}

\noindent In \cite[Propositions~9.1, 9.3]{boxq} there are many relations satisfied by 
the  maps
\begin{align*}
%\label{eq:maplist1}
K, \quad K^{-1},
\quad
A^*_L, 
\quad
B^*_L, 
\quad
A^*_R, 
\quad
B^*_R, 
\quad
A_\ell, 
\quad
B_\ell, 
\quad
A_r, 
\quad
B_r.
\end{align*}
\noindent For convenience we reproduce these relations in Appendix A. These relations will be used in our main results.

\section{The $U_q(\widehat{\mathfrak{sl}}_2)$-module $\mathbb U$ and its submodule $\bf U$}
\noindent We now bring in the algebra $U_q(\widehat{\mathfrak{sl}}_2)$. The definition of this algebra can be found in Appendix B. In the present section,
we turn the vector space $\mathbb U$ into a $U_q(\widehat{\mathfrak{sl}}_2)$-module, and describe the submodule $\bf U$ generated by the vector $\bf 1$.
\medskip

\noindent The following is  our first main result.
 \begin{theorem} \label{thm:m1} The vector space $\mathbb U$ becomes a  $U_q(\widehat{\mathfrak{sl}}_2)$-module  on which the  $U_q(\widehat{\mathfrak{sl}}_2)$-generators act as follows:

 \bigskip
 
 \centerline{
 \begin{tabular}{c| ccccccc}
{\rm generator} &  $E_0$ & $F_0$ & $K^{\pm 1}_0$  & $E_1$ & $F_1$ & $K^{\pm 1} _1$  & $D^{\pm 1}$ 
\\
\hline
{\rm action on $ \mathbb U$} & $A^*_R$ & $ \frac{ q A_r K^{-1} - q^{-1} A_{\ell}}{q-q^{-1}}$ & $q^{\pm 1}K^{\mp 1}$  & $B^*_R$ & $ \frac{ B_r K-B_\ell}{q-q^{-1}}$ & $K^{\pm 1}$ &  $X^{\mp 1}$ 
\\
\end{tabular}
}
\end{theorem}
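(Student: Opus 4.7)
The strategy is to take the presentation of $U_q(\widehat{\mathfrak{sl}}_2)$ given in Appendix B, substitute for each generator the operator indicated in the table, and verify each defining relation on $\mathbb U$. Essentially every identity required among $X, Y, K^{\pm 1}, A^*_L, B^*_L, A^*_R, B^*_R, A_\ell, B_\ell, A_r, B_r$ has been catalogued in Appendix A (reproducing \cite[Propositions~9.1, 9.3]{boxq}), together with Lemmas \ref{lem:xyE}, \ref{lem:k}, \ref{lem:XYKU}, \ref{lem:XYAABB}, \ref{lem:XYAABB2}, \ref{lem:Uinv2}. The proof is therefore largely a matter of matching each Appendix B relation to the relevant Appendix A identity, with only one genuinely substantive computation at the end.

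I would first dispense with the Cartan and grading relations. By Lemma \ref{lem:XYKU} each of $X, Y, K$ acts as a scalar on every $\mathbb U(r,s)$, which makes $K_i^{\pm 1} K_i^{\mp 1} = I$, $K_0 K_1 = K_1 K_0$, and $D D^{-1} = I$ automatic, and in addition yields $K_0 K_1 = (qK^{-1})(K) = qI$, consistent with level $1$ (the expected level of the basic module). The weight relations $K_i E_j K_i^{-1} = q^{a_{ij}} E_j$ and $K_i F_j K_i^{-1} = q^{-a_{ij}} F_j$, along with the $D$-commutations $D E_0 D^{-1} = q E_0$, $D E_1 D^{-1} = E_1$ and the analogous $F$ relations, follow line by line from Lemmas \ref{lem:XYAABB} and \ref{lem:XYAABB2}; for instance $K A^*_R = q^{-2} A^*_R K$ gives $K_0 E_0 K_0^{-1} = q^2 E_0$.

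Next come the Drinfeld--Jimbo commutators $[E_i, F_j] = \delta_{ij}(K_i - K_i^{-1})/(q-q^{-1})$. The case $i \neq j$ amounts to showing that $A^*_R$ commutes with each of $B_r, B_\ell$ and that $B^*_R$ commutes with each of $A_r, A_\ell$, all of which appear in Appendix A. The case $i = j$ is where the specific coefficients $q, q^{-1}$ in the formulas for $F_0, F_1$ are forced: one extracts the commutators $[A^*_R, A_r]$, $[A^*_R, A_\ell]$, $[B^*_R, B_r]$, $[B^*_R, B_\ell]$ from Appendix A, substitutes $K = X^2 Y^{-2}$, and checks by a short calculation that the indicated linear combination collapses to $(K_i - K_i^{-1})/(q - q^{-1})$.

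The main obstacle is the four $q$-Serre relations. The $E$-side relations for $A^*_R$ and $B^*_R$ do not hold on all of $\mathbb V$, but they hold on $\mathbb U$ because $x, y$ satisfy the $q$-Serre relations in the $q$-shuffle algebra (Section~9); the resulting identity on $\mathbb U$ is among those recorded in Appendix A. The $F$-side relations are the hardest step: each $F_i$ is a two-term combination, so expanding the triple commutator in \eqref{eq:qs} produces sixteen monomials of length four in $A_r, A_\ell, B_r, B_\ell, K^{\pm 1}$ that must cancel using the commutations in Appendix A. My plan is to first verify the Serre identity for $F_1$ by pushing all factors of $K$ to the right, thereby reducing the claim to the $q$-Serre identity satisfied by $x, y$ under $\star$; the Serre identity for $F_0$ then follows from the one for $F_1$ via the involution $\sigma$ using Lemmas \ref{lem:AAXYK} and \ref{lem:AAAABB2}. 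Once all these relations are verified, Theorem~\ref{thm:m1} is established.
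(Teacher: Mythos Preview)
Your overall strategy matches the paper's: verify each defining relation of Appendix~B using the identities collected in Appendix~A together with Lemmas~\ref{lem:XYAABB}, \ref{lem:XYAABB2}. Two points in your write-up need correction, however.

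First, in the off-diagonal commutator $[E_0,F_1]=0$ you claim that $A^*_R$ commutes with $B_r$. It does not: Appendix~A gives $A^*_R B_r = q^{-2} B_r A^*_R$. What is true is that $A^*_R$ commutes with $B_r K$, because the $q^{-2}$ from $A^*_R B_r = q^{-2}B_r A^*_R$ is cancelled by the $q^{2}$ coming from $K A^*_R = q^{-2}A^*_R K$. The same mechanism applies to $B^*_R$ and $A_r K^{-1}$ in $[E_1,F_0]=0$. So the conclusion is right but the stated reason is wrong.

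Second, your plan to deduce the $F_0$--Serre relation from the $F_1$--Serre relation via $\sigma$ does not work. Under $\sigma$ (Lemmas~\ref{lem:AAXYK}, \ref{lem:AAAABB2}) one has $K\mapsto K^{-1}$, $A_r\leftrightarrow B_r$, $A_\ell\leftrightarrow B_\ell$, hence
\[
\sigma(F_1)=\frac{A_rK^{-1}-A_\ell}{q-q^{-1}},\qquad
\sigma(F_0)=\frac{qB_rK-q^{-1}B_\ell}{q-q^{-1}},
\]
which are neither $F_0$ nor $F_1$ (these are the generators appearing in the first row of Proposition~\ref{thm:One}, not in Theorem~\ref{thm:m1}). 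So applying $\sigma$ to one $F$--Serre identity does not yield the other. You must either verify both $F$--Serre relations by the same direct expansion, or do what the paper does: invoke \cite[Lemma~10.3, Corollary~10.4]{boxq}, which give a structural reason why $qA_rK^{-1}-q^{-1}A_\ell$ and $B_rK-B_\ell$ satisfy the $q$-Serre relations (exploiting that $(A_\ell,B_\ell)$ and $(A_r,B_r)$ each satisfy them and mutually commute, together with the $K$-weight relations). With these two corrections your argument goes through and is essentially the paper's.
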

\begin{proof} This is routinely checked using Lemmas 
\ref{lem:Uinv}, \ref{lem:Uinv2}, \ref{lem:obv} along with 
 the relations in Lemmas \ref{lem:XYAABB}, \ref{lem:XYAABB2} and Appendix A. Among the things to check, is that $q A_r K^{-1} - q^{-1} A_{\ell}$ and $B_r K-B_\ell$ satisfy the
 $q$-Serre relations. This can be checked easily using \cite[Lemma~10.3, Corollary~10.4]{boxq}.
\end{proof}

\noindent Consider the $U_q(\widehat{\mathfrak{sl}}_2)$-module $\mathbb U$ from Theorem \ref{thm:m1}.
Recall the $\mathbb N^2$-grading of  $\mathbb U$ from around  \eqref{eq:VUU}. Next we describe how the $U_q(\widehat{\mathfrak{sl}}_2)$-generators act on the homogeneous components of this grading.

\begin{lemma} \label{lem:fdown} For $(r,s)\in \mathbb N^2$ the following hold on $\mathbb U(r,s)$:
\begin{align} 
& K_0=q^{2s-2r+1}I, \qquad \qquad  K_1=q^{2r-2s}I, 
\qquad \qquad D=q^{-r}I.
\label{eq:3}
\end{align}
Moreover 
\begin{align}
&E_0 \mathbb U(r,s) \subseteq \mathbb U(r-1,s), \qquad \qquad F_0 \mathbb U(r,s) \subseteq \mathbb U(r+1,s), \label{eq:4}
\\
& E_1 \mathbb U(r,s) \subseteq \mathbb U(r,s-1), \qquad \qquad F_1 \mathbb U(r,s) \subseteq \mathbb U(r,s+1). \label{eq:5}
\end{align}
\end{lemma}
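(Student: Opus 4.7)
The plan is to derive each assertion directly from the explicit formulas in Theorem \ref{thm:m1}, using the scalar actions of $X$, $Y$, $K$ on $\mathbb U(r,s)$ together with the degree-shift behavior of the maps $A^*_L, B^*_L, A^*_R, B^*_R, A_\ell, B_\ell, A_r, B_r$.

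First I would handle the three scalar identities in \eqref{eq:3}. By Lemma \ref{lem:XYKU}, the maps $X$, $Y$, $K$ act on $\mathbb U(r,s)$ as $q^r I$, $q^s I$, $q^{2r-2s}I$ respectively. Theorem \ref{thm:m1} gives $K_1 = K$, so $K_1 = q^{2r-2s}I$ on $\mathbb U(r,s)$. Likewise $K_0 = qK^{-1}$, which evaluates to $q \cdot q^{-(2r-2s)}I = q^{2s-2r+1}I$ on $\mathbb U(r,s)$. Finally $D = X^{-1}$ gives $D = q^{-r}I$ on $\mathbb U(r,s)$.

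Next I would verify \eqref{eq:4} and \eqref{eq:5} by inspecting how each generator acts. For $E_0 = A^*_R$ and $E_1 = B^*_R$, the required containments are immediate from Lemma \ref{lem:down2}. For $F_0 = (qA_r K^{-1} - q^{-1}A_\ell)/(q-q^{-1})$, note that on $\mathbb U(r,s)$ the operator $K^{-1}$ is a scalar, so $K^{-1}$ preserves $\mathbb U(r,s)$, and then both $A_r$ and $A_\ell$ send $\mathbb U(r,s)$ into $\mathbb U(r+1,s)$ by Lemma \ref{lem:ddown2}; hence the same holds for $F_0$. The argument for $F_1$ is identical with $B_r$, $B_\ell$ in place of $A_r$, $A_\ell$, giving the containment into $\mathbb U(r,s+1)$.

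There is no real obstacle here: every ingredient has been established earlier in the paper, and the proof is just a careful bookkeeping of the $\mathbb N^2$-grading combined with the explicit actions from Theorem \ref{thm:m1}. The only step one must be slightly attentive to is the appearance of $K^{\pm 1}$ inside $F_0$, where one uses that $K^{-1}$ acts as a nonzero scalar on each homogeneous component and therefore does not disturb the grading.
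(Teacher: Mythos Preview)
Your proposal is correct and follows the same approach as the paper, which simply cites Lemmas \ref{lem:XYKU}, \ref{lem:down2}, \ref{lem:ddown2} together with the table in Theorem \ref{thm:m1}. Your write-up just makes explicit the bookkeeping that the paper leaves to the reader; the only cosmetic point is that for $F_1$ it is $K$ rather than $K^{-1}$ that appears, but your observation that it acts as a nonzero scalar on $\mathbb U(r,s)$ covers both cases.
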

\begin{proof} By Lemmas  \ref{lem:XYKU},  \ref{lem:down2}, \ref{lem:ddown2} and the data in Theorem \ref{thm:m1}.
\end{proof}

\noindent In this paragraph we recall a few concepts about $U_q(\widehat{\mathfrak{sl}}_2)$-modules; see for example \cite[Section~3.2]{hongkang}. Let $W$ denote a $U_q(\widehat{\mathfrak{sl}}_2)$-module. A {\it weight space} for $W$ is a common eigenspace for the action of
$K_0, K_1, D$ on $W$. The sum of these weight spaces is direct.
We call $W$ a {\it weight module} whenever $W$ is equal to the sum of its weight spaces.
If $W$ is a weight module, then every submodule of $W$ is a weight module \cite[Proposition~3.2.1]{hongkang}.
\smallskip

\noindent We return our attention to the $U_q(\widehat{\mathfrak{sl}}_2)$-module $\mathbb U$ from Theorem  \ref{thm:m1}.
We mentioned above Lemma \ref{lem:XYKU}
that the sum $\mathbb U = \sum_{(r,s)\in \mathbb N^2} \mathbb U(r,s)$ is direct. By this and
\eqref{eq:3},  the $U_q(\widehat{\mathfrak{sl}}_2)$-module $\mathbb U$ is a weight module, and its weight spaces are
the nonzero subspaces among $\mathbb U(r,s)$ ($r,s \in \mathbb N$). Note that these weight spaces have finite dimension.
\medskip

\noindent It turns out that the $U_q(\widehat{\mathfrak{sl}}_2)$-module $\mathbb U$ is not irreducible. Next we consider its submodules.
\begin{definition}\label{def:Lambda}
Let $\bf U$ denote the submodule of the $U_q(\widehat{\mathfrak{sl}}_2)$-module $\mathbb U$ that is generated by the vector $\bf 1$.
\end{definition}
\begin{lemma} \label{lem:tp} For the $U_q(\widehat{\mathfrak{sl}}_2)$-module $\bf U$,
\begin{enumerate}
\item[\rm (i)]  $\bf U$ is contained in every nonzero submodule of the
 $U_q(\widehat{\mathfrak{sl}}_2)$-module $\mathbb U$;
 \item[\rm (ii)] $\bf U$ is the unique irreducible submodule of the  $U_q(\widehat{\mathfrak{sl}}_2)$-module $\mathbb U$.
 \end{enumerate}
 \end{lemma}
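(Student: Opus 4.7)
The plan is to reduce both parts to Lemma~\ref{lem:ABirred2}. For part (i), let $W$ be any nonzero submodule of the $U_q(\widehat{\mathfrak{sl}}_2)$-module $\mathbb{U}$. Since $W$ is a submodule, it is in particular closed under the actions of $E_0$ and $E_1$. By Theorem~\ref{thm:m1} these generators act on $\mathbb{U}$ as $A^*_R$ and $B^*_R$ respectively, so $W$ is a nonzero subspace of $\mathbb{V}$ that is closed under $A^*_R$ and $B^*_R$. Lemma~\ref{lem:ABirred2} then gives $\mathbf{1}\in W$. Since $\mathbf{U}$ is by Definition~\ref{def:Lambda} the submodule generated by $\mathbf{1}$, we conclude $\mathbf{U}\subseteq W$.

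For part (ii), first note that $\mathbf{U}$ itself is a nonzero submodule of $\mathbb{U}$ (it contains $\mathbf{1}$). Any nonzero submodule of $\mathbf{U}$ is also a nonzero submodule of $\mathbb{U}$, hence contains $\mathbf{U}$ by part (i); therefore it equals $\mathbf{U}$. This shows $\mathbf{U}$ is irreducible. For uniqueness, if $W$ is any irreducible submodule of $\mathbb{U}$, then $W$ is nonzero, so $\mathbf{U}\subseteq W$ by part (i), and irreducibility of $W$ forces $W=\mathbf{U}$.

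The main obstacle is really already handled upstream: part (i) hinges on the fact that the Chevalley generators $E_0, E_1$ act through precisely the operators $A^*_R, B^*_R$ for which the ``irreducibility under $\ast_R$'' lemma is available. There are no weight-space or finite-dimensionality arguments needed here, because Lemma~\ref{lem:ABirred2} is already strong enough to produce $\mathbf{1}$ in any nonzero $A^*_R, B^*_R$-invariant subspace of $\mathbb{V}$ (and hence of $\mathbb{U}$). So the proof should be short, amounting to a citation of Theorem~\ref{thm:m1}, Lemma~\ref{lem:ABirred2}, and Definition~\ref{def:Lambda}.
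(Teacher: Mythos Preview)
Your proof is correct and follows essentially the same approach as the paper: for (i) you invoke Theorem~\ref{thm:m1} to see that $E_0,E_1$ act as $A^*_R,B^*_R$, apply Lemma~\ref{lem:ABirred2} to obtain ${\bf 1}\in W$, and then use Definition~\ref{def:Lambda}; for (ii) you spell out explicitly what the paper compresses into ``By (i) above.''
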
 
 \begin{proof} (i) Let $W$ denote a nonzero submodule of the  $U_q(\widehat{\mathfrak{sl}}_2)$-module $\mathbb U$.
 The vector space $W$ is invariant under $A^*_R$, $B^*_R$ by Theorem \ref{thm:m1}, so ${\bf 1} \in W$ by Lemma \ref{lem:ABirred2}. The  $U_q(\widehat{\mathfrak{sl}}_2)$-module $\bf U$ is generated by $\bf 1$, so ${\bf U} \subseteq W$.
 \\
 \noindent (ii) By (i) above.
 \end{proof}

\noindent Next we consider how the $U_q(\widehat{\mathfrak{sl}}_2)$-generators act on the vector $\bf 1$.

\begin{lemma} \label{lem:one} For  the $U_q(\widehat{\mathfrak{sl}}_2)$-module $\mathbb U$,
\begin{align*}
& K_0 {\bf1} = q {\bf1}, \qquad K_1{\bf 1}={\bf1},\qquad D{\bf 1} = {\bf1}, \\
&E_0 {\bf 1} = 0,  \qquad 
 F^2_0 {\bf1}=0,
\qquad E_1{\bf 1} =0, \qquad  F_1{\bf 1}=0.
\end{align*}
%%K_0 {\bf1} = q {\bf1}, \qquad K_1{\bf 1}={\bf1}, \\
%%& D{\bf 1} = {\bf1}, \qquad F^2_0 {\bf1}=0, \qquad F_1{\bf 1}=0.
\end{lemma}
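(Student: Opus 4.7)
The plan is to verify each equation by a direct computation using the actions from Theorem \ref{thm:m1} combined with Lemma \ref{lem:fdown}. Since $\mathbf{1}\in\mathbb U(0,0)$, the scalar identities \eqref{eq:3} at $(r,s)=(0,0)$ immediately yield $K_0\mathbf{1}=q\mathbf{1}$, $K_1\mathbf{1}=\mathbf{1}$, and $D\mathbf{1}=\mathbf{1}$. The identities $E_0\mathbf{1}=0$ and $E_1\mathbf{1}=0$ reduce to $A^*_R\mathbf{1}=0$ and $B^*_R\mathbf{1}=0$, which are part of \eqref{eq:ABaction} in Definition \ref{def:AABB}.

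For $F_1\mathbf{1}$, the formula in Theorem \ref{thm:m1} gives
\[
F_1\mathbf{1}=\frac{B_rK\mathbf{1}-B_\ell\mathbf{1}}{q-q^{-1}}=\frac{B_r\mathbf{1}-B_\ell\mathbf{1}}{q-q^{-1}},
\]
using that $K$ acts as $I$ on $\mathbb U(0,0)$. By Definition \ref{def:Aell} both summands equal $\mathbf{1}\star y=y\star\mathbf{1}=y$, so they cancel and $F_1\mathbf{1}=0$.

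The one nontrivial identity is $F_0^2\mathbf{1}=0$, which requires two stages. First, applying $F_0=\frac{qA_rK^{-1}-q^{-1}A_\ell}{q-q^{-1}}$ to $\mathbf{1}$ and using $A_\ell\mathbf{1}=x\star\mathbf{1}=x$, $A_r\mathbf{1}=\mathbf{1}\star x=x$, together with $K^{-1}\mathbf{1}=\mathbf{1}$, yields $F_0\mathbf{1}=\frac{(q-q^{-1})x}{q-q^{-1}}=x$. Second, I apply $F_0$ to $x\in\mathbb U(1,0)$, on which $K^{-1}$ acts as $q^{-2}I$ by Lemma \ref{lem:XYKU}. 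Using Definition \ref{def:Aell} and the $q$-shuffle product formulas $x\star x=(1+q^2)xx$ (which is both $A_\ell x$ and $A_r x$), I get
\[
qA_rK^{-1}x-q^{-1}A_\ell x=q\cdot q^{-2}(1+q^2)xx-q^{-1}(1+q^2)xx=0,
\]
so $F_0x=0$ and hence $F_0^2\mathbf{1}=0$.

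There is no real obstacle here: every step is a one-line computation, and the only mildly substantive point is the cancellation in the second application of $F_0$, which is engineered precisely by the factors $q$ and $K^{-1}$ appearing in the definition of $F_0$ in Theorem \ref{thm:m1}.
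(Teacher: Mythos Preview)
Your proof is correct and follows exactly the approach the paper indicates: the paper's own proof simply says the identities are ``routinely checked using the data in Theorem \ref{thm:m1} and Lemma \ref{lem:fdown},'' and you have carried out precisely those routine checks in full detail.
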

\begin{proof} This is routinely checked using the data in Theorem \ref{thm:m1} and Lemma \ref{lem:fdown}.
\end{proof}

\noindent There is a well known  $U_q(\widehat{\mathfrak{sl}}_2)$-module $V(\Lambda_0)$ that is said to be basic; see \cite[p.~221]{hongkang}.
The module $V(\Lambda_0)$ is 
highest weight, integrable, and level one; see \cite[Chapter ~10]{ariki} and \cite[Chapter~5]{JM}.
The module $V(\Lambda_0)$ is characterized as follows.
\begin{lemma} \label{lem:char} {\rm (See \cite[pp.~63, 64]{JM}.)} There exists a $U_q(\widehat{\mathfrak{sl}}_2)$-module $V(\Lambda_0)$ with the following property: $V(\Lambda_0)$ 
is generated by a nonzero vector $\bf v$
such that 
\begin{align*}
& K_0 {\bf v} = q {\bf v}, \qquad K_1{\bf v}={\bf v}, \qquad D{\bf v} = {\bf v},\\
&E_0 {\bf v} = 0,  \qquad 
 F^2_0 {\bf v}=0,
\qquad E_1{\bf v} =0, \qquad  F_1{\bf v}=0.
\end{align*}
Moreover $V(\Lambda_0)$  is irreducible, infinite-dimensional, and  unique up to isomorphism of $U_q(\widehat{\mathfrak{sl}}_2)$-modules.
\end{lemma}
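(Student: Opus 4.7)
The statement is a well-known foundational fact about the basic representation of $U_q(\widehat{\mathfrak{sl}}_2)$, so the plan is to either appeal to the general theory of integrable highest-weight modules over quantum affine algebras or, more concretely, to assemble the standard construction from the generators/relations presentation in Appendix B. I will outline the latter since it matches the style of the paper.

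\textbf{Existence.} The plan is to build $V(\Lambda_0)$ as the irreducible quotient of a highest-weight module. First, define a Verma-type module $M(\Lambda_0)$ as the quotient of $U_q(\widehat{\mathfrak{sl}}_2)$ by the left ideal generated by $E_0$, $E_1$, $F_1$, $K_0 - q\cdot 1$, $K_1 - 1$, $D - 1$; let $\bf v$ denote the image of $1$. The commutation relations in Appendix B ensure that this is well-defined and that $\bf v$ satisfies all the eigenvalue and annihilation conditions listed, \emph{except} possibly the integrability condition $F_0^2 {\bf v} = 0$. Next, pass to the quotient $V(\Lambda_0) := M(\Lambda_0)/N$, where $N$ is the sum of all submodules not containing $\bf v$ (equivalently, the largest proper submodule); this quotient is irreducible and inherits the listed relations. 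The only non-obvious thing to verify is that $F_0^2 {\bf v} = 0$ can be imposed consistently; this will be done by including $F_0^2 \bf v$ in the defining ideal and then checking, via the $q$-Serre relations of Appendix B and standard arguments on locally finite vectors, that the resulting submodule generated by $F_0^2 \bf v$ does not contain $\bf v$.

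\textbf{Irreducibility, integrability, and infinite-dimensionality.} Irreducibility is automatic from the construction as a unique irreducible quotient. For integrability, the main point is that the set of vectors on which each $E_i$, $F_i$ acts locally nilpotently is a $U_q(\widehat{\mathfrak{sl}}_2)$-submodule; since it contains $\bf v$ (using $F_0^2 \bf v = 0$ together with the annihilations $E_0 \bf v = E_1 \bf v = F_1 \bf v = 0$ and the $q$-Serre relations of Appendix B to propagate local nilpotence), it must equal all of $V(\Lambda_0)$. To see that $V(\Lambda_0)$ is infinite-dimensional, it suffices to exhibit infinitely many linearly independent weight vectors; the most efficient route is to use the Hayashi realization recalled in the introduction (each Young diagram contributes an independent weight vector), or alternatively to produce an explicit sequence $F_{i_1} F_{i_2} \cdots F_{i_k} {\bf v}$ at strictly decreasing weights using the $\mathbb{Z}$-action of $D$.

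\textbf{Uniqueness.} Suppose $V'$ is another $U_q(\widehat{\mathfrak{sl}}_2)$-module generated by a nonzero vector ${\bf v}'$ satisfying the same relations. By the universal property of $M(\Lambda_0)$ there is a surjective homomorphism $M(\Lambda_0) \twoheadrightarrow V'$ sending ${\bf v} \mapsto {\bf v}'$; since ${\bf v}' \neq 0$ and $F_0^2 {\bf v}' = 0$ by hypothesis, this factors through the integrable quotient and then through the unique irreducible quotient $V(\Lambda_0)$, giving a surjection $V(\Lambda_0) \twoheadrightarrow V'$. As $V(\Lambda_0)$ is irreducible this map is an isomorphism.

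\textbf{Main obstacle.} The delicate point is the verification that $F_0^2 \bf v = 0$ is compatible with irreducibility, i.e., that the quotient by $F_0^2 {\bf v}$ is still nonzero and still generated by the image of $\bf v$. This is a standard but nontrivial calculation involving the affine Serre relations and the Drinfeld--Jimbo presentation; in practice one leverages the general integrability theorem for $U_q$ of a symmetrizable Kac--Moody algebra (as in \cite[Chapter 3]{hongkang}), which is why the author simply cites \cite[pp.~63, 64]{JM}.
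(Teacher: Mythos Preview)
The paper does not prove this lemma at all; it is stated with a bare citation to \cite[pp.~63,~64]{JM} and used as a black box in the two-line proof of Theorem~\ref{thm:3}. Your proposal correctly anticipates this (your closing sentence says as much), and your outline of the Verma-module construction, integrability, and uniqueness is the standard route found in the cited references (\cite{JM}, \cite[Chapter~3]{hongkang}, \cite{lusztig}).

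One small wrinkle in your uniqueness paragraph: the way Lemma~\ref{lem:char} is actually invoked in Theorem~\ref{thm:3} requires that \emph{any} module generated by a vector satisfying the listed relations (not just an irreducible one) is isomorphic to $V(\Lambda_0)$---note that the proof of Theorem~\ref{thm:3} cites only Definition~\ref{def:Lambda} and Lemmas~\ref{lem:one},~\ref{lem:char}, not Lemma~\ref{lem:tp}. So the real content is that the quotient of $M(\Lambda_0)$ by the submodule generated by $F_0^2{\bf v}$ and $F_1{\bf v}$ is \emph{already} irreducible; your phrase ``factors through the integrable quotient and then through the unique irreducible quotient'' obscures this, since those two quotients coincide. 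That coincidence is exactly the ``main obstacle'' you flag at the end, and it is indeed the substantive theorem being imported from \cite{JM} or \cite[Theorem~3.5.4]{hongkang}.
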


\noindent The following is our second main result.
\begin{theorem} \label{thm:3} The $U_q(\widehat{\mathfrak{sl}}_2)$-modules $\bf U$ and $V(\Lambda_0)$ are isomorphic.
\end{theorem}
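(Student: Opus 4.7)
The approach is to apply Lemma \ref{lem:char}, which characterizes $V(\Lambda_0)$ up to isomorphism among $U_q(\widehat{\mathfrak{sl}}_2)$-modules by three properties: (a) it is generated by a nonzero vector satisfying the prescribed list of relations; (b) it is irreducible; and (c) it is infinite-dimensional. Once all three are verified for $\bf U$, the uniqueness statement in Lemma \ref{lem:char} immediately gives ${\bf U}\cong V(\Lambda_0)$.

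Property (a) is essentially given: by Definition \ref{def:Lambda}, $\bf U$ is generated by ${\bf 1}\neq 0$, and Lemma \ref{lem:one} shows that ${\bf 1}$ satisfies the very relations prescribed for the vector $\bf v$ in Lemma \ref{lem:char}, namely $K_0{\bf 1}=q{\bf 1}$, $K_1{\bf 1}={\bf 1}$, $D{\bf 1}={\bf 1}$, $E_0{\bf 1}=0$, $F_0^2{\bf 1}=0$, $E_1{\bf 1}=0$, and $F_1{\bf 1}=0$. Property (b) is the content of Lemma \ref{lem:tp}(ii).

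The main task is (c). My plan is to produce an infinite linearly independent family inside $\bf U$ by iteratively applying $F_0$ and $F_1$ to ${\bf 1}$. By Lemma \ref{lem:fdown} these operators shift the bidegree by $(1,0)$ and $(0,1)$ respectively, and by Lemma \ref{lem:XYKU} the bidegrees correspond to distinct eigenvalues of $D$ (namely $q^{-r}I$), where distinctness follows from $q$ not being a root of unity. A short computation from Theorem \ref{thm:m1} together with the $q$-shuffle formulas of Section 7 yields
\[
F_0{\bf 1}=x,\qquad F_1F_0{\bf 1}=\lbrack 2\rbrack_q\, xy,\qquad F_0F_1F_0{\bf 1}=xyx,
\]
landing respectively in $\mathbb U(1,0)$, $\mathbb U(1,1)$, $\mathbb U(2,1)$. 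I would then argue by induction on $n$ that alternately applying $F_0$ and $F_1$ a total of $n$ times to ${\bf 1}$ yields a nonzero scalar multiple of the alternating word $xyxy\cdots$ of length $n$, forcing the $n$-th iterate to lie in a new weight space and to be nonzero.

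The main obstacle is making this induction clean: at each step one must confirm that the cancellation between the $qA_rK^{-1}$ and $q^{-1}A_\ell$ pieces of $F_0$, and between the $B_rK$ and $B_\ell$ pieces of $F_1$, leaves exactly one surviving basis word with a nonzero coefficient rather than collapsing to zero. Should this prove cumbersome, a shorter route to (c) is to combine Lemma \ref{lem:fdown} with the observation that $K_0K_1=qI$ on $\bf U$, so $\bf U$ has level $1$, and then invoke the standard fact that every finite-dimensional irreducible $U_q(\widehat{\mathfrak{sl}}_2)$-module has level zero; combined with (b), this forces $\bf U$ to be infinite-dimensional. Either way, (a)--(c) together with Lemma \ref{lem:char} complete the proof.
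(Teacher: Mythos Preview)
Your argument is correct, but it does more work than necessary because of how you are reading Lemma~\ref{lem:char}. In the paper's formulation, the uniqueness clause refers to the \emph{first} property alone: any $U_q(\widehat{\mathfrak{sl}}_2)$-module generated by a nonzero vector satisfying that list of relations is already isomorphic to $V(\Lambda_0)$; the irreducibility and infinite-dimensionality are then consequences, not additional hypotheses to be checked. Accordingly, the paper's proof is one line: Definition~\ref{def:Lambda} says $\bf U$ is generated by ${\bf 1}$, Lemma~\ref{lem:one} says ${\bf 1}$ satisfies the relations, and Lemma~\ref{lem:char} finishes. Your verification of (b) via Lemma~\ref{lem:tp}(ii) and your separate argument for (c) are valid, just superfluous from this viewpoint. (Incidentally, in your explicit computation $F_0F_1F_0{\bf 1}=[2]_q\,xyx$, not $xyx$; the missing scalar does not affect the nonvanishing you need.) What your approach buys is independence from the precise strength of the uniqueness statement: if one only had the weaker characterization ``unique among irreducible infinite-dimensional modules with such a generator,'' your proof would still go through, whereas the paper's would not.
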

\begin{proof} By Definition \ref{def:Lambda} and Lemmas \ref{lem:one}, \ref{lem:char}.
\end{proof}

\noindent Descriptions of $V(\Lambda_0)$ can be found in  \cite[Chapter ~10]{ariki} and  \cite[Section~9]{hongkang} and \cite[Chapter~5]{JM}; see also \cite[Section~20.4]{carter} and \cite[Chapter~14]{kac}.
Our next general goal is to describe $V(\Lambda_0)$ from the point of view of $\bf U$.

\begin{definition}\label{def:bfU} \rm
For $(r,s)\in \mathbb N^2$ define ${\bf U}(r,s) = {\bf U} \cap \mathbb U(r,s)$. %%%%For notational convenience define ${\bf U}(r,-1)=0$ and ${\bf U}(-1,s)=0$.
\end{definition}
\noindent The  $U_q(\widehat{\mathfrak{sl}}_2)$-module $\bf U$ is a weight module, and its weight spaces are
the nonzero subspaces among ${\bf U}(r,s)$ ($r,s \in \mathbb N$). More detail is given in the next result.

\begin{lemma} \label{lem:fdown2} The following {\rm (i)--(iv)} hold for the $U_q(\widehat{\mathfrak{sl}}_2)$-module $\bf U$.
\begin{enumerate} \item[\rm (i)]  ${\bf U}(0,0)=\mathbb F {\bf 1}$.
\item[\rm (ii)] The sum ${\bf U}  =\sum_{(r,s)\in \mathbb N^2} {\bf U}(r,s)$ is direct.
\item[\rm (iii)] 
For $(r,s)\in \mathbb N^2$ the following hold on ${\bf U}(r,s)$:
\begin{align*} 
& K_0=q^{2s-2r+1}I, \qquad \qquad  K_1=q^{2r-2s}I, 
\qquad \qquad D=q^{-r}I.
%%\label{eq:3}
\end{align*}
\item[\rm (iv)] For $(r,s) \in \mathbb N^2$,
\begin{align*}
&E_0 {\bf U}(r,s) \subseteq {\bf U}(r-1,s), \qquad \qquad F_0 {\bf U}(r,s) \subseteq {\bf U}(r+1,s), 
\\
& E_1 {\bf U}(r,s) \subseteq {\bf U}(r,s-1), \qquad \qquad F_1 {\bf U}(r,s) \subseteq {\bf U}(r,s+1),
\end{align*}
where ${\bf U}(r,-1)=0$ and ${\bf U}(-1,s)=0$.
\end{enumerate}
\end{lemma}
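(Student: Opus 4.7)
The plan is to derive each of (i)--(iv) by transferring the corresponding statements for $\mathbb U$ (which were established in Lemma \ref{lem:fdown} and around \eqref{eq:VUU}) to the submodule $\bf U$, using the general fact that a submodule of a weight module is a weight module.

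For (i), I would argue directly: we have ${\bf 1}\in {\bf U}$ by Definition \ref{def:Lambda}, and ${\bf 1}\in \mathbb U(0,0)$; hence $\mathbb F{\bf 1}\subseteq {\bf U}(0,0)$. Conversely, ${\bf U}(0,0)\subseteq \mathbb U(0,0)=\mathbb F{\bf 1}$ (noted just after \eqref{eq:VUU}), which gives equality.

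For (ii), the key input is the paragraph just above Definition \ref{def:Lambda}: the $U_q(\widehat{\mathfrak{sl}}_2)$-module $\mathbb U$ is a weight module whose weight spaces are the nonzero $\mathbb U(r,s)$. By the cited result \cite[Proposition~3.2.1]{hongkang}, any submodule of a weight module is itself a weight module, so $\bf U$ is a weight module and its weight spaces are precisely the nonzero intersections ${\bf U}\cap \mathbb U(r,s)={\bf U}(r,s)$. Directness of the sum $\sum_{(r,s)}{\bf U}(r,s)$ is inherited from the direct sum decomposition of $\mathbb U$.

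For (iii), since ${\bf U}(r,s)\subseteq \mathbb U(r,s)$, the action of each of $K_0,K_1,D$ on ${\bf U}(r,s)$ is the restriction of its action on $\mathbb U(r,s)$, and the scalars are read off directly from Lemma \ref{lem:fdown}. For (iv), I would combine two containments: on the one hand, since $\bf U$ is a $U_q(\widehat{\mathfrak{sl}}_2)$-submodule, $E_0{\bf U}(r,s)\subseteq {\bf U}$; on the other hand, ${\bf U}(r,s)\subseteq \mathbb U(r,s)$, so by Lemma \ref{lem:fdown} we get $E_0{\bf U}(r,s)\subseteq \mathbb U(r-1,s)$. Intersecting yields $E_0{\bf U}(r,s)\subseteq {\bf U}\cap \mathbb U(r-1,s)={\bf U}(r-1,s)$, and the same argument applied to $F_0$, $E_1$, $F_1$ covers the remaining three containments.

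There is no real obstacle here; the only mildly nontrivial point is invoking the standard fact that every submodule of a weight module is again a weight module, which is what legitimizes the decomposition in (ii). Everything else is a direct consequence of Lemma \ref{lem:fdown} together with the definition ${\bf U}(r,s)={\bf U}\cap \mathbb U(r,s)$.
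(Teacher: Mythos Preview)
Your proposal is correct and follows essentially the same approach as the paper: the paper's proof of (i) cites Definition~\ref{def:bfU} and $\mathbb U(0,0)=\mathbb F{\bf 1}$, of (ii) cites that $\bf U$ is a weight module (via \cite[Proposition~3.2.1]{hongkang}, as you do), and of (iii), (iv) cites Lemma~\ref{lem:fdown} together with Definition~\ref{def:bfU}. You have simply unpacked each of these citations in more detail.
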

\begin{proof} (i) By Definition \ref{def:bfU} and since $\mathbb U(0,0)=\mathbb F {\bf 1}$.
\\ \noindent  (ii) Since $\bf U$ is a weight module.
\\ \noindent
(iii), (iv) By Lemma \ref{lem:fdown} and Definition \ref{def:bfU}.
%%The sum $\sum_{(r,s)\in \mathbb N^2} {\bf U}(r,s)$ is direct by Definition \ref{def:bfU} and since the sum
%%$\sum_{(r,s)\in \mathbb N^2} {\mathbb U}(r,s)$ is direct. By (iii), (iv) the sum $\sum_{(r,s)\in \mathbb N^2} {\bf U}(r,s)$ is a submodule of the $U_q(\widehat{\mathfrak{sl}}_2)$-module ${\bf U}$. This submodule is nonzero
%%since it contains $\bf 1$, and hence equal to $\bf U$ by the irreducibility of $\bf U$.
\end{proof}

\noindent Our next goal is to describe the weight space dimensions for the $U_q(\widehat{\mathfrak{sl}}_2)$-module $\bf U$. Recall the partition numbers $\lbrace p_n\rbrace_{n \in \mathbb N}$ from Section 3.
%%${\bf  U}(r,s)$ for $(r,s)\in \mathbb N^2$. %%%We will use generating functions.
\medskip

\begin{proposition} \label{prop:hk} For $(r,s) \in \mathbb N^2$ the vector space ${\bf U}(r,s)\not=0$ if and only if $r \geq (r-s)^2$. In this case
${\rm dim}\, {\bf U}(r,s)=p_n$, where $n=r-(r-s)^2$.
\end{proposition}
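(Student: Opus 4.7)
The plan is to transfer the question to the basic module $V(\Lambda_0)$ via the isomorphism in Theorem \ref{thm:3}, and then appeal to the standard character formula for $V(\Lambda_0)$ found in the references cited (e.g.\ \cite[Chapter~12]{kac}, \cite[Chapter~10]{ariki}, \cite[Chapter~5]{JM}).

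First, I would identify the weight of the subspace ${\bf U}(r,s)$ inside the weight lattice of $U_q(\widehat{\mathfrak{sl}}_2)$. By Lemma \ref{lem:fdown2}(iii) the generators $K_0, K_1, D$ act as scalars on ${\bf U}(r,s)$, and these scalars agree with those one would compute for the weight $\Lambda_0 - r\alpha_0 - s\alpha_1$. Since Lemma \ref{lem:one} shows that $\bf 1$ is a vector of weight $\Lambda_0$, and since Lemma \ref{lem:fdown2}(iv) shows that $F_0$ shifts $(r,s)\mapsto (r+1,s)$ and $F_1$ shifts $(r,s)\mapsto (r,s+1)$, the $(r,s)$-grading coincides with the $(\alpha_0,\alpha_1)$-depth from $\Lambda_0$. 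Hence ${\bf U}(r,s)$ is the weight space of $\bf U$ at $\Lambda_0-r\alpha_0-s\alpha_1$, and by Theorem \ref{thm:3} it is isomorphic to the corresponding weight space of $V(\Lambda_0)$.

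Next I would invoke the known weight multiplicity formula for $V(\Lambda_0)$. Writing the minimal imaginary root as $\delta=\alpha_0+\alpha_1$ and the classical simple root as $\bar\alpha_1=\alpha_1$, the standard formula says that the weight space of $V(\Lambda_0)$ at $\Lambda_0-k\delta-m\bar\alpha_1$ has dimension $p(k-m^2)$, where $p$ is the partition function and $p(n)=0$ for $n<0$. This follows from the Weyl--Kac character formula together with the Jacobi triple product identity, and is carried out explicitly in the references above; the underlying mechanism is that the (single) string function of $V(\Lambda_0)$ equals $\prod_{n\geq 1}(1-q^n)^{-1}$.

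Finally, I would convert parametrizations. Using $\alpha_0=\delta-\alpha_1$ one has
\[
r\alpha_0+s\alpha_1 \;=\; r\delta+(s-r)\bar\alpha_1,
\]
so that $k=r$ and $m=s-r$, giving $m^2=(r-s)^2$. The nonvanishing condition $k\geq m^2$ becomes $r\geq(r-s)^2$, and in that case the dimension is $p(r-(r-s)^2)=p_n$ with $n=r-(r-s)^2$, as claimed. The only real obstacle is setting up the weight bookkeeping in the conventions of Appendix B; once the weight of ${\bf U}(r,s)$ is pinned down, the result is a direct citation of the known character of $V(\Lambda_0)$.
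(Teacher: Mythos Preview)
Your proposal is correct and follows essentially the same approach as the paper: both transfer the question to $V(\Lambda_0)$ via Theorem~\ref{thm:3} and then cite the known weight-space dimensions from the literature. The paper's proof is a two-line citation of \cite[pp.~221,~222]{hongkang} together with Theorem~\ref{thm:3}; your version makes explicit the weight bookkeeping (identifying ${\bf U}(r,s)$ with the $\Lambda_0 - r\alpha_0 - s\alpha_1$ weight space and converting to the $(k,m)$-parametrization) that the paper leaves to the reader, but the underlying argument is the same.
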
 
\begin{proof} For the $U_q(\widehat{\mathfrak{sl}}_2)$-module $V(\Lambda_0)$  the weight space dimensions are described in \cite[pp.~221,~222]{hongkang}. The result
follows from that description and Theorem \ref{thm:3} 
above.
\end{proof}
 \begin{example} 
\label{newtable}
\rm
For $0 \leq r,s\leq 6$ the dimension of ${\bf U}(r,s)$
is given in the $(r,s)$-entry of the matrix below:
\begin{align*}
	\left(
         \begin{array}{ccccccc}
               1&0&0&0&0&0&0 
	       \\
               1&1&1&0&0&0&0 
               \\
	       0&1&2&1&0&0&0
              \\
	      0&0&2&3&2&0&0
             \\
	     0&0&1&3&5&3&1 
            \\
	    0&0&0&1&5&7&5
	    \\
	    0&0&0&0&2&7&11
                  \end{array}
\right)
\end{align*}
\end{example}
\noindent Compare the above matrix with the one in Example \ref{ex:tabledij}.
\medskip

\noindent Next we describe the generating function $\sum_{(r,s)\in \mathbb N^2} {\rm dim} \,{\bf U}(r,s) t^r u^s$.
\medskip

\noindent Define the generating function 
\begin{align}
%%\phi(t,u) = \sum_{ \stackrel{\scriptstyle a,b\in \mathbb N,}{\scriptstyle (a-b)^2=a}} t^a u^b. 
\label{eq:GFphi}
\phi(t,u)&=  \sum_{n\in \mathbb Z}  t^{n^2} u^{n^2-n}. 
\end{align}
\noindent Note that
\begin{align*}
\phi(t,u)&=  1 + t + tu^2 + t^4 u^2 + t^4 u^6 + \cdots.
\end{align*}

%%%%%%%%%%%%%%%%
%%\sum_{\stackrel{\scriptstyle j+k+2\ell+1=n,}{\scriptstyle j,k,\ell\geq 0}}
%%%%%%%%%%%%%%%%%%%%%%%%
%%  E_i=\prod_{\stackrel{0 \leq j \leq d}{j \neq i}}
   %%         \frac{A-\theta_j I}{\theta_i-\theta_j}
%%	    \qquad \qquad (0 \leq i \leq d).
%%%%%%%%%%%%%%%%%%%%%%%%%
\begin{proposition} We have 
\begin{align*}
 \sum_{(r,s)\in \mathbb N^2} {\rm dim}{\bf U}(r,s) t^r u^s = p(tu) \phi(t,u),
\end{align*}
where $p(t)$ is from \eqref{eq:gfp} and $\phi(t,u)$ is from \eqref{eq:GFphi}.
\end{proposition}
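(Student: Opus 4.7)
The plan is to evaluate the left-hand side directly by plugging in the explicit formula for $\dim {\bf U}(r,s)$ given by Proposition \ref{prop:hk}, and then to reindex the double sum so as to factor it as a product.

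First, I would recall from Proposition \ref{prop:hk} that $\dim {\bf U}(r,s) = 0$ unless $r \geq (r-s)^2$, in which case $\dim {\bf U}(r,s) = p_n$ with $n = r - (r-s)^2$. So the sum in question is
\begin{equation*}
\sum_{(r,s)\in \mathbb N^2} \dim {\bf U}(r,s) \, t^r u^s = \sum_{\substack{(r,s)\in \mathbb N^2 \\ r \geq (r-s)^2}} p_{\,r-(r-s)^2}\, t^r u^s.
\end{equation*}

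Next I would introduce the change of variables $m = r - s \in \mathbb Z$ and $n = r - m^2 \in \mathbb N$, so that $r = n + m^2$ and $s = n + m^2 - m = n + m(m-1)$. The key verification is that this substitution is a bijection between the set of pairs $(r,s) \in \mathbb N^2$ with $r \geq (r-s)^2$ and the set of pairs $(n,m) \in \mathbb N \times \mathbb Z$. Indeed, the condition $r \geq (r-s)^2$ is exactly $n \geq 0$, the condition $r \geq 0$ then follows since $n + m^2 \geq 0$, and the condition $s \geq 0$ follows because $m(m-1) \geq 0$ for every integer $m$ (it vanishes at $m=0,1$ and is positive otherwise).

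After reindexing, the sum becomes
\begin{equation*}
\sum_{n \in \mathbb N}\sum_{m \in \mathbb Z} p_n \, t^{n+m^2} u^{n+m^2-m} = \Bigl(\sum_{n \in \mathbb N} p_n (tu)^n\Bigr)\Bigl(\sum_{m\in\mathbb Z} t^{m^2} u^{m^2-m}\Bigr),
\end{equation*}
and the two factors on the right are by definition $p(tu)$ (from \eqref{eq:gfp}) and $\phi(t,u)$ (from \eqref{eq:GFphi}), completing the proof.

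The only nontrivial step is the bijection check, which is not really an obstacle — it just requires noting that $m(m-1) \geq 0$ on $\mathbb Z$ so that the positivity of $s$ comes for free. Everything else is a direct substitution using Proposition \ref{prop:hk}.
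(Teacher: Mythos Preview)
Your proof is correct and is precisely the intended argument: the paper's own proof says only ``This is a reformulation of Proposition \ref{prop:hk},'' and your reindexing via $m=r-s$, $n=r-m^2$ is exactly the natural way to carry out that reformulation. The bijection check (in particular the observation that $m(m-1)\geq 0$ forces $s\geq 0$) is the only point requiring care, and you handled it.
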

\begin{proof} This is a reformulation of Proposition \ref{prop:hk}.
%%%The generating function is given by the character of the module $L(\Lambda_0)$ for the associated affine Lie algebra $A^{(1)}_1$; this character is found in \cite[Theorem~10.4]{kac}.
\end{proof}

\noindent In Appendix C, we give a basis for each nonzero ${\bf U}(r,s)$ such that   $r+s\leq 10$.

 \section{A characterization of the $U_q(\widehat{\mathfrak{sl}}_2)$-module $\bf U$} 
 \noindent In order to motivate this section, we glance at the basis vectors displayed in Appendix C.
 Each displayed vector is a linear combination of some words in $\mathbb V$ that do not begin with $y$ or $xx$. Consequently, each displayed vector is contained in the kernel of $B^*_L$ and the kernel of $(A^*_L)^2$.
 Using this observation, we will characterize the $U_q(\widehat{\mathfrak{sl}}_2)$-module $\bf U$.

 \begin{definition}\label{def:bfv} \rm Let $\bf V$ denote the intersection of  the kernel of $B^*_L$ and the kernel of $(A^*_L)^2$. Note that $\bf V$
 is a subspace of the vector space $\mathbb V$.
 \end{definition}
\noindent We have several comments about $\bf V$.
\begin{lemma} \label{lem:noyxx} The vector space $\bf V$ has a basis consisting of the words in $\mathbb V$ that do not begin with $y$ or $xx$.
\end{lemma}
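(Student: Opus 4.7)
The plan is to reduce the claim to two facts about how $B^*_L$ and $(A^*_L)^2$ act on the standard basis of words. Both maps have the useful property that they send each word to either another word or to $0$. For $B^*_L$, Lemma \ref{lem:ABKer}(iii) already gives us exactly what we need: a basis for $\ker B^*_L$ consisting of the words in $\mathbb V$ that do not begin with $y$. So the only new ingredient is to analyze the kernel of $(A^*_L)^2$.

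Using Lemma \ref{lem:aug}, I would describe the action of $(A^*_L)^2$ on a word $w=\ell_1\ell_2\cdots\ell_n$ case by case. If $w = {\bf 1}$ or $\ell_1 = y$, then already $A^*_L w = 0$. If $n = 1$ and $\ell_1 = x$, then $A^*_L w = {\bf 1}$ and $(A^*_L)^2 w = 0$. If $n \geq 2$ with $\ell_1 = x$ and $\ell_2 = y$, then $A^*_L w = y\ell_3\cdots\ell_n$, which is killed by another application of $A^*_L$. The only remaining case is $n \geq 2$ with $\ell_1 = \ell_2 = x$, in which case $(A^*_L)^2 w = \ell_3\cdots\ell_n$, a nonzero word. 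Since distinct words beginning with $xx$ have distinct suffixes of length $n-2$, the restriction of $(A^*_L)^2$ to the set of words beginning with $xx$ is injective. It follows that $\ker (A^*_L)^2$ has a basis consisting of the words in $\mathbb V$ that do not begin with $xx$.

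Finally, I would invoke the elementary observation that if $V$ is a vector space with basis $B$ and $S_1, S_2 \subseteq B$, then the subspaces $\mathrm{span}(S_1)$ and $\mathrm{span}(S_2)$ intersect in $\mathrm{span}(S_1 \cap S_2)$; indeed, any vector in the intersection has an expansion in the basis $B$ whose support lies in both $S_1$ and $S_2$. Applying this with $V = \mathbb V$, its standard basis, $S_1$ the words not beginning with $y$, and $S_2$ the words not beginning with $xx$, yields that ${\bf V} = \ker B^*_L \cap \ker (A^*_L)^2$ has a basis consisting of the words that do not begin with $y$ or $xx$. The argument is essentially routine; there is no genuine obstacle, only the bookkeeping for the $(A^*_L)^2$ action above.
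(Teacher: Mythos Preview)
Your proof is correct and follows essentially the same approach as the paper, which simply cites Definitions \ref{def:AABB} and \ref{def:bfv}; you have spelled out the routine details that the paper leaves implicit. The case analysis for $(A^*_L)^2$ and the standard-basis intersection argument are exactly what underlies the paper's one-line justification.
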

\begin{proof} By Definitions  \ref{def:AABB}, \ref{def:bfv}.
\end{proof} 

\begin{lemma} \label{lem:fff} The sum ${\bf V} = \mathbb F{\bf 1} + \mathbb F x + xy\mathbb V$ is direct.
\end{lemma}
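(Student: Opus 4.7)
The plan is to invoke Lemma \ref{lem:noyxx} and partition the explicit basis of $\mathbf{V}$ it provides according to the length-at-most-$2$ prefix of each word. A word $w$ in $\mathbb V$ that does not begin with $y$ or $xx$ falls into exactly one of three mutually exclusive cases: either $w$ is the trivial word $\mathbf 1$ (length $0$), or $w$ has length $1$ and hence equals $x$ (it cannot equal $y$), or $w$ has length at least $2$ and begins with $x$ but not $xx$, which forces $w$ to begin with $xy$ and hence to have the form $w=xyw'$ for a uniquely determined word $w'$ in $\mathbb V$.

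Write $S_0=\{\mathbf 1\}$, $S_1=\{x\}$, and $S_2=\{xyw' : w'\text{ a word in }\mathbb V\}$. These three sets are pairwise disjoint subsets of the standard basis of $\mathbb V$, so their union is linearly independent. By the free algebra structure, left multiplication by $xy$ is an injection on $\mathbb V$ that sends the standard basis of $\mathbb V$ bijectively onto $S_2$; therefore $S_2$ is a basis of $xy\mathbb V$. Similarly $S_0$ is a basis of $\mathbb F\mathbf 1$ and $S_1$ is a basis of $\mathbb F x$.

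Since $S_0\sqcup S_1\sqcup S_2$ is the basis of $\mathbf V$ furnished by Lemma \ref{lem:noyxx}, the subspaces $\mathbb F\mathbf 1$, $\mathbb F x$, and $xy\mathbb V$ together span $\mathbf V$; and since $S_0$, $S_1$, $S_2$ are disjoint sets of linearly independent vectors in the standard basis, the sum $\mathbb F\mathbf 1+\mathbb F x+xy\mathbb V$ is direct. This simultaneously establishes the equality and the directness claimed in the statement.

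There is no real obstacle here: the entire argument rests on Lemma \ref{lem:noyxx} together with the freeness of $\mathbb V$, which makes left multiplication by $xy$ an injection whose image has the standard-basis description used above. The only point that needs to be stated cleanly is the trichotomy on the prefix of the word, so that the three pieces of the decomposition correspond to disjoint chunks of the standard basis rather than merely to subspaces whose intersection happens to be zero.
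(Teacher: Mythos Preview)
Your proof is correct and follows essentially the same approach as the paper: both deduce the statement directly from Lemma~\ref{lem:noyxx}. The paper's proof is the single line ``By Lemma~\ref{lem:noyxx}'', whereas you have spelled out the prefix trichotomy and the resulting partition of the standard basis explicitly.
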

\begin{proof} By Lemma \ref{lem:noyxx}.
\end{proof}

\noindent We are going to show that ${\bf U}=\mathbb U \cap {\bf V}$. We will do this in several steps. In the first step, we show that $\mathbb U\cap {\bf V}$ is a submodule of the $U_q(\widehat{\mathfrak{sl}}_2)$-module $\mathbb U$.

\begin{lemma}\label{lem:bfVinv} The vector space $\bf V$ is invariant under each of
\begin{align*} 
X^{\pm 1}, \qquad Y^{\pm 1}, \qquad K^{\pm 1}, \qquad A^*_R, \qquad B^*_R.
\end{align*}
\end{lemma}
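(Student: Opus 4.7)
The plan is to handle the two types of maps separately, since $X^{\pm1},Y^{\pm1},K^{\pm1}$ are of a fundamentally different nature (grading-preserving scalar actions) from $A^*_R,B^*_R$ (word-removal operators).

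For $X^{\pm 1}, Y^{\pm 1}, K^{\pm 1}$: By Lemma \ref{lem:xyE} and Lemma \ref{lem:k}, each of these maps acts as a scalar on every homogeneous component $\mathbb V(r,s)$. By Lemma \ref{lem:noyxx}, $\bf V$ has a basis of words, and each word is homogeneous, so $\bf V$ is a graded subspace: ${\bf V} = \bigoplus_{(r,s)} \bigl({\bf V}\cap \mathbb V(r,s)\bigr)$. Any map that acts as a scalar on each $\mathbb V(r,s)$ automatically preserves any graded subspace. (Equivalently, one can use the commutation relations in Lemma \ref{lem:XYAABB}: from $XA^*_L=q^{-1}A^*_LX$ one gets $A^*_LX=qXA^*_L$ and $(A^*_L)^2 X = q^2 X(A^*_L)^2$, so $X$ preserves $\ker(A^*_L)^2$; similarly for the other kernels.)

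For $A^*_R$ and $B^*_R$: The key claim is that $A^*_R$ and $B^*_R$ each commute with both $A^*_L$ and $B^*_L$. This should be available from the list in Appendix A, but it is also immediate from Definition \ref{def:AABB}: for a word $w=\ell_1\cdots\ell_n$ with $n\ge 2$ the operator $A^*_R$ strips $\ell_n$ (conditional on $\ell_n=x$) while $A^*_L$ (or $B^*_L$) strips $\ell_1$ (conditional on $\ell_1=x$ resp. $\ell_1=y$), and the two operations affect disjoint positions, so they commute. For $n\le 1$ both composites vanish. Granting these commutations, if $v\in {\bf V}=\ker B^*_L\cap \ker(A^*_L)^2$, then
\begin{equation*}
B^*_L(A^*_R v)=A^*_R(B^*_L v)=0, \qquad (A^*_L)^2(A^*_R v)=A^*_R\bigl((A^*_L)^2 v\bigr)=0,
\end{equation*}
so $A^*_R v\in {\bf V}$. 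The argument for $B^*_R$ is identical.

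I expect no real obstacle here: the only mild subtlety is verifying the commutations $[A^*_R,A^*_L]=0=[A^*_R,B^*_L]$ etc., but this is a one-line check on the word basis (or a citation to Appendix A). A direct alternative, which I would mention as a sanity check, is to use Lemma \ref{lem:noyxx} and observe that if $w$ is a word not beginning with $y$ or $xx$, then either $w\in\{{\bf 1},x\}$ (in which case $A^*_R w, B^*_R w\in\{0,{\bf 1}\}\subseteq {\bf V}$) or $w=xy\,w'$ with $w'$ a word, in which case $A^*_R w$ and $B^*_R w$ are either $0$ or $xy\,w''$ for some shorter word $w''$ (possibly empty, giving $x$), and any such result lies in $\mathbb F{\bf 1}+\mathbb F x+xy\mathbb V={\bf V}$ by Lemma \ref{lem:fff}.
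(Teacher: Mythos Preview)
Your proof is correct. The paper's own proof is the terse instruction ``Use Definitions \ref{def:xy}, \ref{def:k} and Lemma \ref{lem:fff},'' i.e.\ check directly on the decomposition ${\bf V}=\mathbb F{\bf 1}+\mathbb F x+xy\mathbb V$ that each map sends each summand back into ${\bf V}$. This is precisely the ``direct alternative'' you offer at the end.

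Your primary argument for $A^*_R$ and $B^*_R$ takes a different route: rather than computing on the explicit description of ${\bf V}$, you exploit that ${\bf V}$ is \emph{defined} as $\ker B^*_L\cap\ker(A^*_L)^2$ and use the commutations $A^*_LA^*_R=A^*_RA^*_L$, $B^*_LA^*_R=A^*_RB^*_L$, etc.\ from Proposition~\ref{thm:v1} to see that any map commuting with $A^*_L$ and $B^*_L$ preserves this intersection of kernels. This is cleaner conceptually and avoids case-checking; the paper's approach is more hands-on but requires no external relations beyond the definition of the maps. One minor wording slip in your alternative: when $w=xy$ (so $w'$ is empty), $B^*_R w=x$, which is not of the form $xy\,w''$; but you correctly land in $\mathbb F{\bf 1}+\mathbb F x+xy\mathbb V$ regardless, so the conclusion stands.
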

\begin{proof} Use Definitions \ref{def:xy}, \ref{def:k} 
and Lemma \ref{lem:fff}.
\end{proof} 

\begin{lemma} \label{lem:rrll} The vector space $\bf V$ is invariant under each of
\begin{align*} 
q A_r K^{-1} - q^{-1} A_{\ell}, \qquad \qquad B_r K - B_{\ell}.
%%\label{eq:twomaps}
\end{align*}
\end{lemma}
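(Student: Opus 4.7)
The plan is to verify invariance by checking the four conditions that characterize membership in $\mathbf V$, namely that for $v \in \mathbf V$ both $(qA_r K^{-1} - q^{-1}A_\ell)v$ and $(B_r K - B_\ell)v$ lie in $\ker B^*_L \cap \ker (A^*_L)^2$. The whole argument is driven by the commutation relations of $A^*_L, B^*_L$ with $A_\ell, B_\ell, A_r, B_r, K^{\pm 1}$ that are collected in Appendix A. The needed identities are of two types: \emph{clean} relations such as $B^*_L A_r = A_r B^*_L$, $B^*_L A_\ell = q^{-2} A_\ell B^*_L$, $A^*_L B_r = B_r A^*_L$, $A^*_L B_\ell = q^{-2} B_\ell A^*_L$, $A^*_L K = q^2 K A^*_L$, $B^*_L K = q^{-2} K B^*_L$, and \emph{inhomogeneous} relations of the form $A^*_L A_\ell = I + q^2 A_\ell A^*_L$, $A^*_L A_r = K + A_r A^*_L$, $B^*_L B_\ell = I + q^2 B_\ell B^*_L$, $B^*_L B_r = K^{-1} + B_r B^*_L$, in which a scalar term is produced.

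The two ``matched'' checks, namely $B^*_L(qA_r K^{-1} - q^{-1}A_\ell)v = 0$ and $(A^*_L)^2(B_r K - B_\ell)v = 0$, use only the clean relations: each $B^*_L$ (resp.\ $(A^*_L)^2$) can be pushed to the right past $A_r, A_\ell, K^{-1}$ (resp.\ $B_r, B_\ell, K$) at the cost of $q$-scalars, leaving a sum whose every term ends in $B^*_L v$ or $(A^*_L)^2 v$, and so vanishes for $v \in \mathbf V$. For the iterated case one uses $(A^*_L)^2 B_\ell = q^{-4} B_\ell (A^*_L)^2$ and $(A^*_L)^2 B_r = B_r(A^*_L)^2$, together with $(A^*_L)^2 K = q^4 K (A^*_L)^2$.

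The two ``mismatched'' checks, namely $B^*_L(B_r K - B_\ell)v = 0$ and $(A^*_L)^2(qA_r K^{-1} - q^{-1}A_\ell)v = 0$, are where the inhomogeneous relations matter, and the point of the particular linear combinations $B_r K - B_\ell$ and $qA_r K^{-1} - q^{-1}A_\ell$ is exactly to make the scalar contributions cancel. For the first,
\begin{align*}
B^*_L(B_r K - B_\ell) &= (B_r B^*_L + K^{-1})K - (I + q^2 B_\ell B^*_L) \\
&= B_r B^*_L K + I - I - q^2 B_\ell B^*_L = B_r B^*_L K - q^2 B_\ell B^*_L,
\end{align*}
which annihilates $\mathbf V$ because $B^*_L v = 0$. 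For the second, iterating $A^*_L A_\ell = I + q^2 A_\ell A^*_L$ and $A^*_L A_r = K + A_r A^*_L$ yields
\[
(A^*_L)^2 A_\ell = (1+q^2) A^*_L + q^4 A_\ell (A^*_L)^2, \qquad (A^*_L)^2 A_r = (1+q^2) K A^*_L + A_r (A^*_L)^2,
\]
and then, using $K A^*_L K^{-1} = q^{-2} A^*_L$,
\[
(A^*_L)^2(qA_r K^{-1} - q^{-1} A_\ell) = \bigl((q+q^{-1}) - (q^{-1}+q)\bigr) A^*_L + qA_r (A^*_L)^2 K^{-1} - q^3 A_\ell (A^*_L)^2,
\]
in which the $A^*_L$ coefficient is zero and the remaining terms vanish on $\mathbf V$ by $(A^*_L)^2 v = 0$ (using also $(A^*_L)^2 K^{-1} = q^{-4} K^{-1} (A^*_L)^2$).

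The main obstacle is purely bookkeeping: arranging the $q$-powers from the various $K$-commutations so that, in the two mismatched cases, the scalar contributions from the inhomogeneous relations cancel exactly. Once this cancellation is verified, invariance follows immediately from the defining conditions of $\mathbf V$.
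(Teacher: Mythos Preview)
Your proof is correct, but it takes a genuinely different route from the paper's. The paper uses the direct-sum description $\mathbf V = \mathbb F\mathbf 1 + \mathbb F x + xy\mathbb V$ from Lemma~\ref{lem:fff} and computes, by hand via the explicit shuffle formulas \eqref{eq:Xcv2}, \eqref{eq:vcX2}, what each of the two operators does to $\mathbf 1$, to $x$, and to an arbitrary $xyw$; in each case the image is shown to lie again in $\mathbb F\mathbf 1 + \mathbb F x + xy\mathbb V$. Your argument instead works purely at the operator level, using the definition $\mathbf V = \ker B^*_L \cap \ker (A^*_L)^2$ together with the commutation relations from Appendix~A to rewrite $B^*_L \circ (\text{operator})$ and $(A^*_L)^2 \circ (\text{operator})$ as expressions that manifestly kill $\mathbf V$. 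Your approach is cleaner and makes transparent \emph{why} the particular combinations $qA_r K^{-1} - q^{-1}A_\ell$ and $B_r K - B_\ell$ are the right ones: they are exactly tuned so the inhomogeneous terms from $A^*_L A_\ell - q^2 A_\ell A^*_L = I$, $A^*_L A_r - A_r A^*_L = K$, etc.\ cancel. The paper's approach, on the other hand, yields as a byproduct the explicit formulas for how $F_0$ and $F_1$ act on $xy\mathbb V$ (e.g.\ $xyw \mapsto xy\bigl(q^{1+2s-2r} w\star x - q^{-1} x\star w\bigr)$), which is extra information your method does not directly produce.
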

\begin{proof} We will use Lemma \ref{lem:fff}.
The map $q A_r K^{-1} - q^{-1} A_{\ell}$ sends ${\bf 1} \mapsto (q-q^{-1})x$ and $x \mapsto 0$. The map $B_r K - B_{\ell}$ sends
${\bf 1} \mapsto 0$ and %$x \mapsto (q^2-q^{-2})xy$.
\begin{align*}
x \mapsto q^2 x \star y- y\star x =  q^2 (xy+q^{-2}yx) -(yx+q^{-2}xy) =  (q^2-q^{-2})xy.
\end{align*}
Pick $(r,s) \in \mathbb N^2$ and a word $w \in \mathbb V(r,s)$. The map  $q A_r K^{-1} - q^{-1} A_{\ell}$ sends
\begin{align}
xyw \mapsto q^{1+ 2s-2r} (xyw)\star x - q^{-1} x\star (xyw).
\label{eq:send}
\end{align}
Using \eqref{eq:vcX2} we obtain
\begin{align}
\label{eq:xywx}
(xyw)\star x &= xy(w \star x) + \lbrack 2 \rbrack_q q^{2r-2s-1} xxyw.
\end{align}
Using \eqref{eq:Xcv2} we obtain
\begin{align} \label{eq:xxyw}
x\star (xyw) &= q \lbrack 2 \rbrack_q xxyw + xy(x \star w).
\end{align}
By \eqref{eq:send}--\eqref{eq:xxyw}
the map $q A_r K^{-1} - q^{-1} A_{\ell}$ sends
\begin{align*}
xyw \mapsto xy \bigl( q^{1+ 2s-2r} w\star x - q^{-1} x \star w\bigr).
\end{align*}
\noindent The map  $B_r K - B_{\ell}$ sends
\begin{align}
\label{eq:Sxyw}
xyw \mapsto q^{2r-2s} (xyw)\star y-y\star (xyw).
\end{align}
Using \eqref{eq:vcX2} we obtain
\begin{align}
\label{eq:xywy}
(xyw)\star y &= xy(w \star y) +  q^{2s-2r+2} xyyw + q^{2s-2r} yxyw.
\end{align}
Using \eqref{eq:Xcv2} we obtain
\begin{align} \label{eq:yxyw}
y\star (xyw) &= yxyw + q^{-2} xyyw+ xy(y\star w).
\end{align}
By \eqref{eq:Sxyw}--\eqref{eq:yxyw}
the map $B_r K - B_{\ell}$ sends
\begin{align*}
xyw \mapsto xy \bigl( (q^2-q^{-2})yw+ q^{2r-2s} w\star y -  y \star w\bigr).
\end{align*}
The result follows from the above comments.
\end{proof}

\begin{lemma}\label{lem:uCapv} The vector space $\mathbb U \cap {\bf V}$ is a  submodule of the  $U_q(\widehat{\mathfrak{sl}}_2)$-module $\mathbb U$.
\end{lemma}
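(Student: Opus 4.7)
The plan is to observe that the lemma is essentially an intersection-of-invariant-subspaces argument: for any operator $T$ on $\mathbb V$, if both $\mathbb U$ and $\mathbf V$ are $T$-invariant, then so is $\mathbb U\cap \mathbf V$. Thus I only need to verify, for each of the seven generators listed in Theorem \ref{thm:m1} (and their inverses), that both $\mathbb U$ and $\mathbf V$ are invariant under the operator on $\mathbb V$ prescribed in the action table.

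First I would collect the already-proved invariances. For $\mathbb U$: Lemma \ref{lem:Uinv} handles $X^{\pm 1}$ and $K^{\pm 1}$; Lemma \ref{lem:Uinv2} handles $A_R^*$ and $B_R^*$; Lemma \ref{lem:obv} handles $A_\ell, B_\ell, A_r, B_r$. Since $K^{\pm 1}$ also preserves $\mathbb U$, the combinations $qA_rK^{-1}-q^{-1}A_\ell$ and $B_rK-B_\ell$ preserve $\mathbb U$ as well. For $\mathbf V$: Lemma \ref{lem:bfVinv} handles $X^{\pm 1}$, $K^{\pm 1}$, $A_R^*$, $B_R^*$; Lemma \ref{lem:rrll} handles the two composite operators $qA_rK^{-1}-q^{-1}A_\ell$ and $B_rK-B_\ell$.

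Now I would match these invariances against the action table of Theorem \ref{thm:m1}. The actions of $E_0, E_1$ are $A_R^*, B_R^*$; the actions of $K_0^{\pm 1}, K_1^{\pm 1}, D^{\pm 1}$ are scalar multiples of $K^{\mp 1}, K^{\pm 1}, X^{\mp 1}$; and the actions of $F_0, F_1$ are scalar multiples of $qA_rK^{-1}-q^{-1}A_\ell$ and $B_rK-B_\ell$. Each of these operators preserves both $\mathbb U$ and $\mathbf V$ by the lemmas cited above, hence preserves $\mathbb U\cap \mathbf V$.

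There is no real obstacle here — all the substantive computational work has already been done in Lemmas \ref{lem:bfVinv} and \ref{lem:rrll} (in particular the delicate $q$-shuffle calculation showing that $xy\mathbb V$ is stable under $qA_rK^{-1}-q^{-1}A_\ell$ and $B_rK-B_\ell$). The present lemma is just the bookkeeping step that assembles those invariance results, together with the known invariances of $\mathbb U$, into the statement that $\mathbb U\cap \mathbf V$ is a $U_q(\widehat{\mathfrak{sl}}_2)$-submodule. So the proof should reduce to one or two sentences of citation.
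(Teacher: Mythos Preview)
Your proposal is correct and matches the paper's approach exactly: the paper's proof is simply ``By Theorem \ref{thm:m1} and Lemmas \ref{lem:bfVinv}, \ref{lem:rrll}.'' The only minor streamlining is that once Theorem \ref{thm:m1} is established, $\mathbb U$ is already a $U_q(\widehat{\mathfrak{sl}}_2)$-module, so the invariance of $\mathbb U$ under all the relevant operators is automatic and you need not separately cite Lemmas \ref{lem:Uinv}, \ref{lem:Uinv2}, \ref{lem:obv}; the only new content is the $\mathbf V$-invariance from Lemmas \ref{lem:bfVinv} and \ref{lem:rrll}, exactly as you identified.
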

\begin{proof} By Theorem  \ref{thm:m1} 
and Lemmas \ref{lem:bfVinv}, \ref{lem:rrll}.
\end{proof}

\begin{lemma}\label{lem:partway} The vector space ${\bf U}$ is a submodule of the  $U_q(\widehat{\mathfrak{sl}}_2)$-module $\mathbb U \cap {\bf V}$.
\end{lemma}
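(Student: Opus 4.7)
The plan is to combine Lemma \ref{lem:uCapv} with the defining property of $\bf U$ as the submodule of $\mathbb U$ generated by $\bf 1$. By Lemma \ref{lem:uCapv}, $\mathbb U \cap \bf V$ is itself a $U_q(\widehat{\mathfrak{sl}}_2)$-submodule of $\mathbb U$. Consequently, to show that $\bf U$ is a submodule of $\mathbb U \cap \bf V$, it suffices to verify that ${\bf U} \subseteq \mathbb U \cap \bf V$, and for this in turn it is enough to check that the generator $\bf 1$ of $\bf U$ lies in $\mathbb U \cap \bf V$.

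The containment $\mathbf 1 \in \mathbb U$ is immediate from Definition \ref{def:Usub}, since $\bf 1$ is the multiplicative identity of the $q$-shuffle algebra $\mathbb U$. For the containment $\mathbf 1 \in \mathbf V$, I would invoke Definition \ref{def:bfv} together with \eqref{eq:ABaction}: by \eqref{eq:ABaction} we have $B^*_L \mathbf 1 = 0$, so $\mathbf 1 \in \ker B^*_L$, and similarly $A^*_L \mathbf 1 = 0$ forces $(A^*_L)^2 \mathbf 1 = 0$, so $\mathbf 1 \in \ker (A^*_L)^2$. Combining these two facts gives $\mathbf 1 \in \mathbf V$ by Definition \ref{def:bfv}, and hence $\mathbf 1 \in \mathbb U \cap \mathbf V$.

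To finish, I would note that $\mathbf U$ is by Definition \ref{def:Lambda} the smallest submodule of $\mathbb U$ containing $\mathbf 1$, while $\mathbb U \cap \mathbf V$ is a submodule of $\mathbb U$ that contains $\mathbf 1$; therefore $\mathbf U \subseteq \mathbb U \cap \mathbf V$, and $\mathbf U$ inherits the structure of a $U_q(\widehat{\mathfrak{sl}}_2)$-submodule of $\mathbb U \cap \mathbf V$. There is no real obstacle here: the statement is an immediate corollary of the preceding lemma, once one observes the trivial membership $\mathbf 1 \in \mathbf V$. The substantive work was carried out in Lemmas \ref{lem:bfVinv} and \ref{lem:rrll}, which secured the invariance of $\mathbf V$ under the operators appearing in the action of the $U_q(\widehat{\mathfrak{sl}}_2)$-generators described in Theorem \ref{thm:m1}.
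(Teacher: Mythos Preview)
Your proof is correct and follows essentially the same approach as the paper: both verify that $\mathbf 1 \in \mathbb U \cap \mathbf V$ and then invoke Definition \ref{def:Lambda} and Lemma \ref{lem:uCapv}. The only cosmetic difference is that the paper cites Lemma \ref{lem:fff} for $\mathbf 1 \in \mathbf V$, whereas you check the kernel conditions directly via \eqref{eq:ABaction}.
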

\begin{proof} 
We have $\bf 1 \in \mathbb U$ by the comment below \eqref{eq:VUU}.
We have ${\bf 1} \in {\bf V}$ by Lemma  \ref{lem:fff}.
So ${\bf 1} \in \mathbb U \cap {\bf V}$. The result follows in view of
Definition \ref{def:Lambda} and Lemma \ref{lem:uCapv}.
\end{proof}

\noindent The  $U_q(\widehat{\mathfrak{sl}}_2)$-module $\mathbb U \cap {\bf V}$ is a weight module, and its weight spaces are the nonzero subspaces among
$\mathbb U(r,s) \cap {\bf V}$ $(r,s \in \mathbb N)$. 
We will return to the  $U_q(\widehat{\mathfrak{sl}}_2)$-module $\mathbb U \cap {\bf V}$ after some comments about $\mathbb U$.
\begin{lemma} \label{lem:eeln}
The $U_q(\widehat{\mathfrak{sl}}_2)$-generators $E_0$, $E_1$ are locally nilpotent on the $U_q(\widehat{\mathfrak{sl}}_2)$-module $\mathbb U$.
\end{lemma}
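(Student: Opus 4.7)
The proof will be essentially immediate from results already in place. The plan is to identify the action of $E_0$ and $E_1$ on $\mathbb{U}$ via Theorem \ref{thm:m1}, and then transport the local nilpotency of $A^*_R$ and $B^*_R$ from $\mathbb{V}$ down to $\mathbb{U}$.

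Specifically, Theorem \ref{thm:m1} tells us that $E_0$ acts on $\mathbb{U}$ as $A^*_R$ and $E_1$ acts on $\mathbb{U}$ as $B^*_R$. By Lemma \ref{lem:Uinv2}, the subspace $\mathbb{U}$ is invariant under both $A^*_R$ and $B^*_R$, so these restrictions are well-defined endomorphisms of $\mathbb{U}$. Now Lemma \ref{lem:ln} asserts that $A^*_R$ and $B^*_R$ are locally nilpotent on the full vector space $\mathbb{V}$; since local nilpotency is inherited by restriction to any invariant subspace (for each $v \in \mathbb{U} \subseteq \mathbb{V}$ one already has some $n$ with $(A^*_R)^n v = 0$, and likewise for $B^*_R$), the desired conclusion follows.

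There is no genuine obstacle: the entire argument is a two-line invocation of Theorem \ref{thm:m1} together with Lemma \ref{lem:ln}. The only thing one might pause on is ensuring the restrictions make sense, and that is exactly the content of Lemma \ref{lem:Uinv2}. Accordingly, I would write the proof in one sentence citing these three results.
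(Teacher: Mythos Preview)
Your proposal is correct and matches the paper's own proof, which simply cites Lemma~\ref{lem:ln} and Theorem~\ref{thm:m1}. Your additional mention of Lemma~\ref{lem:Uinv2} to justify that the restrictions are well-defined is a reasonable clarification but not strictly necessary, since Theorem~\ref{thm:m1} already presupposes that these operators act on $\mathbb{U}$.
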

\begin{proof} By  Lemma  \ref{lem:ln} and Theorem \ref{thm:m1}.
\end{proof}

\begin{lemma} \label{lem:ce}
 The $U_q(\widehat{\mathfrak{sl}}_2)$-generators $F_0$, $F_1$ are not locally nilpotent on the  $U_q(\widehat{\mathfrak{sl}}_2)$-module $\mathbb U$. 
 \end{lemma}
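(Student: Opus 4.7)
The plan is to exhibit, for each of $F_0$ and $F_1$, a specific vector in $\mathbb U$ whose forward orbit under that operator is nonzero at every step. The key observation is that on pure powers of a single letter the $q$-shuffle collapses drastically: only one word appears in $x \star x^n$ and in $x^n \star x$, with an explicit scalar, so the action of $F_0$ and $F_1$ on such vectors can be computed in closed form.

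For $F_0$ I would work with $xx \in \mathbb U(2,0)$ (which lies in $\mathbb U$ because $x \star x = (1+q^2)xx$), and more generally track $x^n$ as $n$ varies. Using \eqref{eq:Xcv2} and \eqref{eq:vcX2} one checks $x \star x^n = x^n \star x = q^n [n+1]_q\, x^{n+1}$, so $A_\ell$ and $A_r$ act on $x^n$ by the same scalar. Since $x^n \in \mathbb U(n,0)$, Lemma \ref{lem:XYKU} gives $K^{-1} x^n = q^{-2n} x^n$. Substituting into the formula for $F_0$ from Theorem \ref{thm:m1} and simplifying via $q^{1-n} - q^{n-1} = -(q-q^{-1})[n-1]_q$, one obtains
\begin{align*}
F_0 x^n \;=\; -\,[n+1]_q [n-1]_q\, x^{n+1}.
\end{align*}
Since $q$ is not a root of unity, $[n+1]_q [n-1]_q$ vanishes only at $n=1$. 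Iterating from $n=2$ therefore never encounters that bad index, and $F_0^k(xx)$ is a nonzero scalar multiple of $x^{k+2}$ for every $k \in \mathbb N$. This proves $F_0$ is not locally nilpotent on $\mathbb U$.

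The case of $F_1$ is entirely parallel. Starting from $y \in \mathbb U(0,1)$, the analogous shuffle computation gives $B_\ell y^n = B_r y^n = q^n [n+1]_q\, y^{n+1}$, and on $\mathbb U(0,n)$ one has $K y^n = q^{-2n} y^n$. The formula for $F_1$ in Theorem \ref{thm:m1} then yields $F_1 y^n = -[n+1]_q [n]_q\, y^{n+1}$, which is nonzero for all $n \geq 1$. Hence $F_1^k(y)$ is a nonzero scalar multiple of $y^{k+1}$ for every $k \in \mathbb N$, so $F_1$ is likewise not locally nilpotent. I do not foresee any real obstacle: the whole proof is a one-step $q$-shuffle calculation combined with the fact that the resulting coefficient is a product of $q$-integers $[m]_q$ with $m \neq 0$. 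The only mild care needed is to start the $F_0$-orbit with $xx$ rather than $\mathbf 1$, so as to avoid the single exceptional index $n=1$ where $[n-1]_q = 0$.
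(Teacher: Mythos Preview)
Your proof is correct and follows the same approach as the paper: both use the vectors $xx$ for $F_0$ and $y$ for $F_1$. The paper simply asserts that $F_0^n(xx)\neq 0$ and $F_1^n y\neq 0$ for all $n\in\mathbb N$ without computation, whereas you have supplied the explicit scalars $F_0 x^n = -[n{-}1]_q[n{+}1]_q\,x^{n+1}$ and $F_1 y^n = -[n]_q[n{+}1]_q\,y^{n+1}$ that justify the claim.
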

 \begin{proof}
 The words $xx$ and $y$ are contained in $\mathbb U$, but
$F^n_0 (xx) \not=0$ and $F^n_1 y\not=0$ for all $n \in \mathbb N$. 
\end{proof}

\begin{lemma} \label{lem:ffln}
The $U_q(\widehat{\mathfrak{sl}}_2)$-generators $F_0$, $F_1$ are locally nilpotent on the $U_q(\widehat{\mathfrak{sl}}_2)$-module $\mathbb U \cap {\bf V}$.
\end{lemma}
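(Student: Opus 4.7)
The plan is to combine the decomposition ${\bf V}=\mathbb F{\bf 1}+\mathbb F x+xy\mathbb V$ from Lemma \ref{lem:fff} with the explicit formulas for $F_0$ and $F_1$ derived inside the proof of Lemma \ref{lem:rrll}. Since $\mathbb U\cap{\bf V}$ is a graded submodule of $\mathbb U$, local nilpotence can be checked on homogeneous vectors, which decompose according to the three summands of ${\bf V}$. The low-dimensional summands are handled by direct computation: one obtains $F_0({\bf 1})=x$, $F_0(x)=0$ and $F_1({\bf 1})=0$, $F_1(x)=[2]_q\,xy$, $F_1(xy)=[2]_q\,xyy$, $F_1(xyy)=0$, so $F_0^2$ and $F_1^3$ annihilate $\mathbb F{\bf 1}+\mathbb F x$.

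The substantive case is $\mathbb U\cap xy\mathbb V$. A closer inspection of the calculation inside the proof of Lemma \ref{lem:rrll} yields the recursion
\begin{equation*}
F_0(xyw)=xy\cdot\tilde F_0(w), \qquad F_1(xyw)=xy\cdot\bigl(\tilde F_1(w)+[2]_q\,yw\bigr),
\end{equation*}
where $\tilde F_0=\frac{qA_rK^{-1}-q^{-1}A_\ell}{q-q^{-1}}$ and $\tilde F_1=\frac{B_rK-B_\ell}{q-q^{-1}}$ are the operators of Theorem \ref{thm:m1} viewed on all of $\mathbb V$, and $yw$ denotes left concatenation in the free algebra. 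Iterating yields $F_0^n(xyw)=xy\cdot\tilde F_0^n(w)$ and similarly for $F_1$. Since left multiplication by $xy$ in the free algebra $\mathbb V$ is injective, it suffices to show that $\tilde F_0^n(w)=0$ and $(\tilde F_1+[2]_q\,y\cdot)^n(w)=0$ for sufficiently large $n$, for any $w\in\mathbb V$ such that $xyw\in\mathbb U$.

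The hard part will be establishing this termination. Note that $\tilde F_0$ is not locally nilpotent on $\mathbb V$ as a whole---a short computation gives $\tilde F_0(x^n)=-[n+1]_q[n-1]_q\,x^{n+1}$, which is nonzero for $n\ge 2$---so the constraint $xyw\in\mathbb U$ must be used essentially. My plan is to restrict to the $U_q(\mathfrak{sl}_2)$-subalgebra $\langle E_0,F_0,K_0^{\pm 1}\rangle$ (and, symmetrically, $\langle E_1,F_1,K_1^{\pm 1}\rangle$) acting on $\mathbb U\cap{\bf V}$. Here $E_0=A^*_R$ is locally nilpotent by Lemma \ref{lem:eeln}, $K_0$ acts semisimply with eigenvalues $q^{2s-2r+1}$ on $\mathbb U(r,s)$ by Lemma \ref{lem:fdown}, and each weight space is finite dimensional. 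Invoking the linear-algebraic framework promised in Appendix E, one then expects to deduce that every finitely generated such $U_q(\mathfrak{sl}_2)$-submodule is integrable, which produces the required nilpotence of $F_0$ (and analogously $F_1$) on any given vector.
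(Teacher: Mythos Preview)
Your recursion $F_0^n(xyw)=xy\cdot\tilde F_0^n(w)$ is correct, and the low-degree checks are fine, but the final step has a genuine gap. Two problems:

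First, the recursion is circular. Since $K$ depends only on $r-s$, the operator $\tilde F_0$ you obtain on $w$ is \emph{identical} to $F_0$ acting on $w\in\mathbb U$ (note $w=B^*_LA^*_L(xyw)\in\mathbb U$ by Lemma~\ref{lem:Uinv2}). So you have reduced ``$F_0$ locally nilpotent on $xyw\in\mathbb U\cap{\bf V}$'' to ``$F_0$ locally nilpotent on $w\in\mathbb U$''. But $w$ need not lie in ${\bf V}$, and by Lemma~\ref{lem:ce} $F_0$ is \emph{not} locally nilpotent on all of $\mathbb U$ (e.g.\ $F_0^n(xx)\neq 0$). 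The stripping of $xy$ gains nothing.

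Second, the appeal to $U_q(\mathfrak{sl}_2)$-theory and Appendix~E is mistaken. Appendix~E concerns the Weyl-type relation $ST-q^2TS=I$ with $S$ locally nilpotent, and concludes that $S$ is surjective; it says nothing about integrability. And the general $U_q(\mathfrak{sl}_2)$ hypotheses you list (locally nilpotent $E$, semisimple $K$ with eigenvalues in $q^{\mathbb Z}$, finite-dimensional weight spaces) do \emph{not} force $F$ to be locally nilpotent: any Verma module satisfies all of them with $F$ acting freely.

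The missing ingredient is quantitative control on the weight-space dimensions of $\mathbb U$. The paper's proof argues by contradiction: if $F_0^{n+1}v\neq 0$ for all $n$ with $v\in\mathbb U(r,s)\cap{\bf V}$, then each $F_0^{n+1}v$ lies in ${\bf V}$, so $(A^*_L)^2$ kills it while $A^*_L$ does not (else $v\in\mathbb F{\bf 1}$ by Lemma~\ref{lem:ABKer}(v)). Hence $\ker(A^*_L)\cap\mathbb U(r+n,s)\neq 0$ for every $n$. Now Appendix~E is invoked with $S=A^*_L$, $T=A_\ell$ to show $A^*_L$ is \emph{surjective} on $\mathbb U$, whence $\dim\mathbb U(r+n-1,s)<\dim\mathbb U(r+n,s)$ for all $n$. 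This contradicts Proposition~\ref{lem:max}, which bounds $\max_r d_{r,s}=\mu_s<\infty$ via the generating-function analysis of Section~3. That dimension bound is the essential input your argument lacks.
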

\begin{proof} First consider $F_0$. Assume that there exists $v \in \mathbb U \cap {\bf V}$ such that $F_0^{n+1}v \not=0$ for all $n \in \mathbb N$. We will get a contradiction.
By our comments below Lemma \ref{lem:partway},
we may assume without loss of generality that $v \in \mathbb U(r,s)$ for some $(r,s) \in \mathbb N^2$. We have $r\geq 1$ and $s\geq 1$, by  Lemma \ref{lem:fff} and $F_0^2 {\bf 1} = 0$ and $F_0 x = 0$.
Let $n \in \mathbb N$. By \eqref{eq:4} and Lemma \ref{lem:uCapv},
 we have $F^{n+1}_0 v \in \mathbb U(r+n+1,s) \cap {\bf V}$. In particular $F^{n+1}_0 v \in {\bf V}$, so 
$(A^*_L)^2 F^{n+1}_0v = 0$ and $B^*_L F^{n+1}_0 v = 0$ in view of Definition \ref{def:bfv}. We have $A^*_L F_0^{n+1}v\not=0$, by Lemma \ref{lem:ABKer}(v)
and since  $B^*_L F^{n+1}_0 v = 0$.
We have
$A^*_L F_0^{n+1}v \in \mathbb U(r+n,s)$ by Lemma \ref{lem:down2}.
By these comments $0 \not= A^*_L F^{n+1}_0 v \in {\rm ker}(A^*_L) \cap \mathbb U(r+n,s)$.
Therefore $0 \not= {\rm ker}(A^*_L) \cap \mathbb U(r+n,s)$. The map $A^*_L$ is locally nilpotent by Lemma \ref{lem:ln}.
In Appendix A we find $A^*_L A_\ell - q^2 A_\ell A^*_L = I$. The vector space $\mathbb U$ is invariant under $A^*_L$ and $A_\ell$. By these comments, we may apply Appendix E with $S=A^*_L$ and $T = A_\ell$ and $V=\mathbb U$.
By Lemma \ref{lem:step8} the map $A^*_L$ is surjective on $\mathbb U$. By this and Lemma \ref{lem:down2},  $A^*_L \mathbb U(r+n,s) = \mathbb U(r+n-1,s)$. By this and $0 \not= {\rm ker}(A^*_L) \cap \mathbb U(r+n,s)$,
we obtain ${\rm dim}\, \mathbb U(r+n-1,s) < {\rm dim} \, \mathbb U(r+n,s)$. Since $n \in \mathbb N$ is arbitrary, 
\begin{align*}
{\rm dim}\,\mathbb U(r-1,s) <
{\rm dim} \,\mathbb U(r,s) <
{\rm dim}\, \mathbb U(r+1,s) <
{\rm dim}\,\mathbb U(r+2,s) < \cdots 
\end{align*}
This contradicts \eqref{eq:mR} and \eqref{eq:dimu}, so $F_0$ is locally nilpotent.
\medskip

\noindent
Next we consider $F_1$.
Assume that there exists $v \in \mathbb U \cap {\bf V}$ such that $F_1^{m+1}v \not=0$ for all $m \in \mathbb N$. We will get a contradiction.
By our comments below Lemma \ref{lem:partway},
we may assume without loss of generality that $v \in \mathbb U(r,s)$ for some $(r,s) \in \mathbb N^2$. We have $r\geq 1$ and $s\geq 1$, by  Lemma \ref{lem:fff} and $F_1 {\bf 1} = 0$ and $F^3_1 x = 0$.
Let $m \in \mathbb N$. By \eqref{eq:5} and Lemma \ref{lem:uCapv},
 we have $F^{m+1}_1 v \in \mathbb U(r,s+m+1) \cap {\bf V}$. In particular $F^{m+1}_1 v \in {\bf V}$, so 
 $B^*_L F^{m+1}_1 v = 0$ in view of Definition \ref{def:bfv}. 
By these comments $0 \not= F^{m+1}_1 v \in {\rm ker}(B^*_L) \cap \mathbb U(r,s+m+1)$.
Therefore $0 \not= {\rm ker}(B^*_L) \cap \mathbb U(r,s+m+1)$. The map $B^*_L$ is locally nilpotent by Lemma \ref{lem:ln}.
In Appendix A we find $B^*_L B_\ell - q^2 B_\ell B^*_L = I$. The vector space $\mathbb U$ is invariant under $B^*_L$ and $B_\ell$. By these comments, we may apply Appendix E with $S=B^*_L$ and $T = B_\ell$ and $V=\mathbb U$.
By Lemma \ref{lem:step8} the map $B^*_L$ is surjective on $\mathbb U$. By this and Lemma \ref{lem:down2},  $B^*_L \mathbb U(r,s+m+1) = \mathbb U(r,s+m)$. By this and $0 \not= {\rm ker}(B^*_L) \cap \mathbb U(r,s+m+1)$,
we obtain ${\rm dim}\, \mathbb U(r,s+m) < {\rm dim} \, \mathbb U(r,s+m+1)$. Since $m \in \mathbb N$ is arbitrary, 
\begin{align*}
{\rm dim}\,\mathbb U(r,s) <
{\rm dim} \,\mathbb U(r,s+1) <
{\rm dim}\, \mathbb U(r,s+2) <
{\rm dim}\,\mathbb U(r,s+3) < \cdots 
\end{align*}
This contradicts \eqref{eq:mS} and \eqref{eq:dimu}, so $F_1$ is locally nilpotent.
\end{proof}

\noindent The following is our third main result.
\begin{theorem}\label{thm:m3} We have ${\bf U} = \mathbb U \cap {\bf V}$.
\end{theorem}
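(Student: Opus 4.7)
The plan is by contradiction, via integrability plus semisimplicity. Since Lemma~\ref{lem:partway} already gives $\mathbf{U} \subseteq N := \mathbb{U} \cap \mathbf{V}$, the task reduces to proving the reverse inclusion $N \subseteq \mathbf{U}$.

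First I verify that $N$ is an integrable weight $U_q(\widehat{\mathfrak{sl}}_2)$-submodule of $\mathbb{U}$. Lemma~\ref{lem:uCapv} makes $N$ a submodule; the $\mathbb{N}^2$-grading of $\mathbb{U}$ restricts to give $N = \bigoplus_{(r,s)\in \mathbb{N}^2}(\mathbb{U}(r,s) \cap \mathbf{V})$ with finite-dimensional weight spaces on which $K_0, K_1, D$ act by the scalars of Lemma~\ref{lem:fdown}; and all four Chevalley generators $E_0, E_1, F_0, F_1$ act locally nilpotently on $N$, by Lemmas~\ref{lem:eeln} and~\ref{lem:ffln}. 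The weights of $N$ lie in a cone bounded above by the weight of $\mathbf{1}$, so $N$ falls under the standard complete reducibility theorem for integrable weight $U_q(\widehat{\mathfrak{sl}}_2)$-modules.

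Applying this theorem, I decompose $N = \bigoplus_\alpha V(\Lambda_\alpha)$ as a direct sum of irreducible integrable highest-weight submodules. Each summand is generated by a nonzero highest-weight vector $v_\alpha \in N$, which must be annihilated by $E_0$ and $E_1$, that is, by $A^*_R$ and $B^*_R$ (Theorem~\ref{thm:m1}). Applying the antiautomorphism $\dagger$ to Lemma~\ref{lem:ABKer}(v), using the commuting diagrams in Lemma~\ref{lem:AAAABB} that conjugate $A^*_L, B^*_L$ to $A^*_R, B^*_R$, one gets $\ker(A^*_R) \cap \ker(B^*_R) = \mathbb{F}\mathbf{1}$ in $\mathbb{V}$, hence in $N$. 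Thus each $v_\alpha \in \mathbb{F}\mathbf{1}$, so there is exactly one summand, namely the one generated by $\mathbf{1}$; this is $\mathbf{U}$ by Definition~\ref{def:Lambda}, giving $N = \mathbf{U}$.

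The hard part is the complete reducibility invocation. A self-contained alternative would proceed by minimal counterexample: pick $v \in (\mathbb{U}(r,s)\cap\mathbf{V}) \setminus \mathbf{U}(r,s)$ with $r+s$ minimal; then $E_0 v, E_1 v \in \mathbf{U}$ by minimality, so $\bar v := v + \mathbf{U}$ is a nonzero highest-weight vector in $N/\mathbf{U}$. Integrability of $N/\mathbf{U}$ forces the Dynkin labels $2s-2r+1$ and $2r-2s$ of its weight to be nonnegative, hence $r = s \geq 1$, and $\bar v$ generates an irreducible integrable submodule of $N/\mathbf{U}$ isomorphic, after a $D$-twist, to $V(\Lambda_0)$. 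One then needs a splitting (e.g., via an $\mathrm{Ext}^1$-vanishing or Casimir argument) producing a nonzero submodule of $N$ not containing $\mathbf{U}$, contradicting Lemma~\ref{lem:tp}(i).
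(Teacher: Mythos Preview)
Your proof is correct and follows essentially the same route as the paper: both establish integrability of $N=\mathbb{U}\cap\mathbf{V}$ via Lemmas~\ref{lem:eeln} and~\ref{lem:ffln}, invoke complete reducibility of integrable weight modules (the paper cites \cite[Theorem~3.5.4]{hongkang}), and finish by showing any nonzero piece of $N$ beyond $\mathbf{U}$ is impossible. The paper phrases the endgame as taking a complement $W$ to $\mathbf{U}$ in $N$ and applying Lemma~\ref{lem:tp}(i) to get $\mathbf{U}\subseteq W$, hence $W=0$; your version---locating each highest-weight vector of an irreducible summand inside $\ker(A^*_R)\cap\ker(B^*_R)=\mathbb{F}\mathbf{1}$---is exactly the content of Lemma~\ref{lem:tp}(i) unpacked, so the two arguments are equivalent.
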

\begin{proof}
 By Lemmas \ref{lem:eeln}, \ref{lem:ffln} the $U_q(\widehat{\mathfrak{sl}}_2)$-generators $E_0$, $E_1$, $F_0$, $F_1$ are locally nilpotent on
 the $U_q(\widehat{\mathfrak{sl}}_2)$-module $\mathbb U \cap {\bf V}$. Therefore,  the $U_q(\widehat{\mathfrak{sl}}_2)$-module $\mathbb U \cap {\bf V}$ is integrable in the sense of \cite[Definition~4.2]{ariki}.
 By this and \cite[Theorem~3.5.4]{hongkang}, the  $U_q(\widehat{\mathfrak{sl}}_2)$-module $\mathbb U \cap {\bf V}$ is completely reducible. 
 By Lemma \ref{lem:partway}, ${\bf U}$ is a submodule of the $U_q(\widehat{\mathfrak{sl}}_2)$-module $\mathbb U \cap {\bf V}$.
 By these comments, there exists
 a submodule $W$ of the  $U_q(\widehat{\mathfrak{sl}}_2)$-module $\mathbb U \cap {\bf V}$ such that the sum
 $\mathbb U \cap {\bf V} = {\bf U} + W$ is direct.  Assume for the moment that $W\not=0$. Then ${\bf U} \subseteq W$ by Lemma \ref{lem:tp}(i), for a contradiction.
 Consequently $W=0$, so
${\bf U} = \mathbb U \cap {\bf V}$.
\end{proof}

\section{Variations on the theme}

\noindent In Theorem \ref{thm:m1} we turned the vector space $\mathbb U$ into a $U_q(\widehat{\mathfrak{sl}}_2)$-module. In this section we describe three more ways to do this.
Each way yields a $U_q(\widehat{\mathfrak{sl}}_2)$-module $\mathbb U$ that is isomorphic to the one in Theorem \ref{thm:m1}.

\begin{proposition}\label{thm:One} For each row in the table below, the vector space $\mathbb U$ becomes a $U_q(\widehat{\mathfrak{sl}}_2)$-module on which the $U_q(\widehat{\mathfrak{sl}}_2)$-generators
act as indicated:

 \bigskip
 
 \centerline{
 \begin{tabular}{c|ccccccc}
 {\rm generator}&  $E_0$ & $F_0$ & $K^{\pm 1}_0$  & $E_1$ & $F_1$ & $K^{\pm 1} _1$  & $D^{\pm 1}$ 
\\
\hline
 %$A^*_R$ & $ \frac{ q A_r K^{-1} - q^{-1} A_{\ell}}{q-q^{-1}}$ & $q^{\pm 1}K^{\mp 1}$  & $B^*_R$ & $ \frac{ B_r K-B_\ell}{q-q^{-1}}$ & $K^{\pm 1}$ &  $X^{\mp 1}$ 
%\\
{\rm action on $\mathbb U$} & $B^*_R$ & $ \frac{ q B_r K - q^{-1} B_{\ell}}{q-q^{-1}}$ & $q^{\pm 1}K^{\pm 1}$  & $A^*_R$ & $ \frac{ A_r K^{-1}-A_\ell}{q-q^{-1}}$ & $K^{\mp 1}$ &  $Y^{\mp 1}$ 
\\
{\rm action on $\mathbb U$} & $A^*_L$ & $ \frac{ q A_\ell K^{-1} - q^{-1} A_{r}}{q-q^{-1}}$ & $q^{\pm 1}K^{\mp 1}$  & $B^*_L$ & $ \frac{ B_\ell K-B_r}{q-q^{-1}}$ & $K^{\pm 1}$ &  $X^{\mp 1}$ 
\\
{\rm action on $\mathbb U$} & $B^*_L$ & $ \frac{ q B_\ell K - q^{-1} B_{r}}{q-q^{-1}}$ & $q^{\pm 1}K^{\pm 1}$  & $A^*_L$ & $ \frac{ A_\ell K^{-1}-A_r}{q-q^{-1}}$ & $K^{\mp 1}$ &  $Y^{\mp 1}$ 
\end{tabular}
}
\bigskip

\noindent The above three $U_q(\widehat{\mathfrak{sl}}_2)$-modules $\mathbb U$ are isomorphic to the  $U_q(\widehat{\mathfrak{sl}}_2)$-module $\mathbb U$ in Theorem \ref{thm:m1}.  For row 1 (resp. row 2) (resp. row 3),
a $U_q(\widehat{\mathfrak{sl}}_2)$-module isomorphism is given by the restriction of $\sigma$ (resp. $\dagger$) (resp. $\tau$) to $\mathbb U$.
\end{proposition}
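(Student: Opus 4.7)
My plan is to obtain each of the three $U_q(\widehat{\mathfrak{sl}}_2)$-module structures on $\mathbb U$ by transporting the module structure from Theorem~\ref{thm:m1} along one of the involutive symmetries $\sigma$, $\dagger$, $\tau$. For row 1 I use $\sigma$, for row 2 I use $\dagger$, and for row 3 I use $\tau$. Each of these is a vector space automorphism of $\mathbb V$ (by Lemmas~\ref{lem:faut}, \ref{lem:fantiaut}, \ref{def:ftauA}) that preserves $\mathbb U$ (by the three commutative diagrams in \eqref{eq:threeDiag}), and each squares to the identity.

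Denote the action in Theorem~\ref{thm:m1} by $\rho : U_q(\widehat{\mathfrak{sl}}_2) \to {\rm End}(\mathbb U)$. For $\phi \in \{\sigma, \dagger, \tau\}$, set $\rho_\phi(g) = \phi^{-1} \rho(g) \phi = \phi\,\rho(g)\,\phi$, where the second equality uses $\phi^2 = I$. Since composition of linear maps is associative, $\rho_\phi$ is again an algebra homomorphism into ${\rm End}(\mathbb U)$, so it endows $\mathbb U$ with a $U_q(\widehat{\mathfrak{sl}}_2)$-module structure; moreover $\phi$ intertwines $\rho_\phi$ with $\rho$ and is therefore a module isomorphism $(\mathbb U, \rho_\phi) \to (\mathbb U, \rho)$. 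This delivers the isomorphism claim for free, and reduces the proposition to identifying $\rho_\phi$ on each generator.

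For each generator, the action specified in Theorem~\ref{thm:m1} is a polynomial in the primitive operators $A^*_R$, $B^*_R$, $A_r$, $A_\ell$, $B_r$, $B_\ell$, $K^{\pm 1}$, $X^{\pm 1}$. Conjugation by $\phi$ distributes over composition and linear combination, so it suffices to conjugate each primitive operator individually, and the answer is read off directly from Lemmas~\ref{lem:AAXYK}, \ref{lem:AAAABB}, \ref{lem:AAAABB2}. For example, with $\phi = \sigma$ we have $\sigma A^*_R \sigma = B^*_R$, $\sigma A_r \sigma = B_r$, $\sigma A_\ell \sigma = B_\ell$, $\sigma K^{\mp 1}\sigma = K^{\pm 1}$, and $\sigma X^{\mp 1} \sigma = Y^{\mp 1}$; substituting into Theorem~\ref{thm:m1} produces row 1 term by term. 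The case $\phi = \dagger$ uses the middle bank of diagrams ($\dagger$ fixes $X$, $Y$, $K$ and swaps $L \leftrightarrow R$ throughout) to produce row 2. The case $\phi = \tau$ uses the bottom bank of diagrams ($\tau$ swaps $X \leftrightarrow Y$ and $K \leftrightarrow K^{-1}$ while mixing the $L/R$ and $A/B$ labels) to produce row 3.

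The argument is conceptual rather than computational, and I anticipate no hard step: everything that could fail has been prearranged by the symmetry lemmas. The only real hazard is bookkeeping — tracking which of $A^*_L, A^*_R, A_\ell, A_r, B^*_L, B^*_R, B_\ell, B_r$ is exchanged with which under each involution — but the commuting squares in Lemmas~\ref{lem:AAXYK}, \ref{lem:AAAABB}, \ref{lem:AAAABB2} settle this unambiguously, and the identities $\sigma^2 = \dagger^2 = \tau^2 = I$ keep the conjugations free of stray factors.
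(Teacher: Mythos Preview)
Your proposal is correct and follows essentially the same approach as the paper: both use the invariance of $\mathbb U$ under $\sigma$, $\dagger$, $\tau$ (from \eqref{eq:threeDiag}) together with the commuting diagrams in Lemmas~\ref{lem:AAXYK}, \ref{lem:AAAABB}, \ref{lem:AAAABB2} to transport the module structure from Theorem~\ref{thm:m1} along each involution. Your write-up is more explicit about the transport-of-structure formalism $\rho_\phi = \phi\,\rho\,\phi$, but the content is the same.
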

\begin{proof}  Below    \eqref{eq:threeDiag} we mentioned that $\mathbb U$ is invariant under each of $\sigma$, $\dagger$, $\tau$. The result follows from this along with Lemmas \ref{lem:Uinv}, \ref{lem:Uinv2}, \ref{lem:obv}  
 and Lemmas \ref{lem:AAXYK}, \ref{lem:AAAABB}, \ref{lem:AAAABB2}.
                                    \end{proof}

 \section{Acknowledgements} 
The author thanks Pascal Baseilhac for many conversations about $U_q(\widehat{\mathfrak{sl}}_2)$ and $U^+_q$.
 
\newpage
\section{Appendix A: Some relations}
In this appendix we list some relations  satisfied by the maps
\begin{align*}
%%\label{eq:maplist1}
K, \quad K^{-1}, 
\quad
A^*_L, 
\quad
B^*_L, 
\quad
A^*_R, 
\quad
B^*_R, 
\quad
A_\ell, 
\quad
B_\ell, 
\quad
A_r, 
\quad
B_r.
\end{align*}

\begin{proposition} {\rm (See \cite[Proposition~9.1]{boxq}.)} 
\label{thm:v1}
We have
\begin{align*}
&
K A^*_L = q^{-2} A^*_L K,
\qquad \qquad 
K B^*_L = q^2 B^*_L K,
\\
&
K A^*_R = q^{-2} A^*_R K,
\qquad \qquad
K B^*_R = q^{2} B^*_R K,
\end{align*}
\begin{align*}
&
K A_\ell = q^{2} A_\ell K,
\qquad \qquad 
K B_\ell = q^{-2} B_\ell K,
\\
&
K A_r = q^{2} A_r  K,
\qquad \qquad 
K B_r = q^{-2} B_r K,
\end{align*}
\begin{align*}
&
A^*_L A^*_R = A^*_R A^*_L, \qquad \qquad 
B^*_L B^*_R = B^*_R B^*_L,
\\
&
A^*_L B^*_R = B^*_R A^*_L, \qquad \qquad 
B^*_L A^*_R = A^*_R B^*_L,
\end{align*}
\begin{align*}
&
A_\ell A_r = A_r A_\ell, \qquad \qquad 
B_\ell B_r = B_r B_\ell,
\\
&A_\ell B_r = B_r A_\ell, \qquad \qquad 
B_\ell A_r = A_r B_\ell,
\end{align*}
\begin{align*}
&
A^*_L B_r = B_r A^*_L, \qquad  \qquad 
B^*_L A_r = A_r B^*_L, 
\\
& A^*_R B_\ell = B_\ell A^*_R, \qquad 
\qquad
B^*_R A_\ell = A_\ell B^*_R,
\end{align*}
\begin{align*}
&
A^*_L B_\ell = q^{-2} B_\ell A^*_L, \qquad  \qquad
B^*_L A_\ell = q^{-2} A_\ell B^*_L,
\\
&A^*_R B_r = q^{-2} B_r A^*_R, \qquad
\qquad 
B^*_R A_r = q^{-2} A_r B^*_R,
\end{align*}
\begin{align*}
& A^*_L A_\ell - q^2 A_{\ell} A^*_L = I,
\qquad \qquad 
 A^*_R A_r - q^2 A_r A^*_R = I,
\\
& B^*_L B_\ell - q^2 B_{\ell} B^*_L = I,
\qquad \qquad 
 B^*_R B_r - q^2 B_r B^*_R = I,
\end{align*}
\begin{align*}
& 
A^*_L A_r - A_r A^*_L = K,
\qquad \qquad 
B^*_L B_r - B_r B^*_L = K^{-1},
\\
& 
A^*_R A_\ell - A_\ell A^*_R = K,
\qquad \qquad 
B^*_R B_\ell - B_\ell B^*_R = K^{-1},
\end{align*}
\begin{align*}
&A_\ell^3 B_\ell
- 
\lbrack 3 \rbrack_q A_\ell^2 B_\ell A_\ell
+
\lbrack 3 \rbrack_q A_\ell B_\ell A^2_\ell
-
B_\ell A^3_\ell = 0,
\\
&B_\ell^3 A_\ell
- 
\lbrack 3 \rbrack_q B_\ell^2 A_\ell B_\ell
+
\lbrack 3 \rbrack_q B_\ell A_\ell B^2_\ell
-
A_\ell B^3_\ell = 0,
\\
&A_r^3 B_r
- 
\lbrack 3 \rbrack_q A_r^2 B_r A_r
+
\lbrack 3 \rbrack_q A_r B_r A^2_r
-
B_r A^3_r = 0,
\\
&B_r^3 A_r
- 
\lbrack 3 \rbrack_q B_r^2 A_r B_r
+
\lbrack 3 \rbrack_q B_r A_r B^2_r
-
A_r B^3_r = 0.
\end{align*}

\end{proposition}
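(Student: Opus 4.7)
My plan is to verify each relation by direct computation on the standard basis of words in $\mathbb V$, exploiting the symmetries $\sigma$, $\dagger$, $\tau$ (Lemmas \ref{lem:AAXYK}, \ref{lem:AAAABB}, \ref{lem:AAAABB2} together with the diagrams in \eqref{eq:threeDiag}) to reduce the work to one or two prototype identities per family. The relations fall into four natural groups, which I would treat in the order below.

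First, the eight $K$-commutation relations are immediate from Lemmas \ref{lem:k}, \ref{lem:down}, \ref{lem:ddown}: each of $A^*_L, A^*_R, A_\ell, A_r$ (resp.\ $B^*_L, B^*_R, B_\ell, B_r$) shifts the $\mathbb N^2$-degree by $\pm(1,0)$ (resp.\ $\pm(0,1)$), and on $\mathbb V(r,s)$ the map $K$ equals $q^{2r-2s}I$, so the scalars $q^{\pm 2}$ fall out from comparing the source and target grades. The four pairwise commutations among $\{A^*_L, B^*_L, A^*_R, B^*_R\}$ reduce on a word $w = \ell_1 \cdots \ell_n$ of length $\geq 2$ to the obvious fact that stripping the first and last letters are independent operations, while on $\bf 1$ and on words of length $1$ both sides vanish or agree directly from Definition \ref{def:AABB}. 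The four pairwise commutations among $\{A_\ell, B_\ell, A_r, B_r\}$ are simply associativity of the $q$-shuffle product.

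Second, for the mixed commutations $A^*_L B_r = B_r A^*_L$, $A^*_L B_\ell = q^{-2} B_\ell A^*_L$, and the creation-annihilation identities $A^*_L A_\ell - q^2 A_\ell A^*_L = I$ and $A^*_L A_r - A_r A^*_L = K$, I would evaluate both sides on a generic word $w = \ell_1 \cdots \ell_n$. Using \eqref{eq:Xcv2} and \eqref{eq:vcX2} to expand $x \star w$ (resp.\ $w \star x$, etc.) as a sum over insertion positions, the interior insertions pair off in the commutator and cancel, while the insertion at the far left (resp.\ far right) produces the correction $I$ or $K$. The exponent bookkeeping: the scalar attached to the far-boundary term is $q^{\sum_j (\ell_j, x)} = q^{2r - 2s}$ on $\mathbb V(r,s)$, which is exactly $K$; the matching scalar for the $_\ell$-with-$*_L$ creation-annihilation is $q^2$ coming from the single letter that was crossed at the boundary. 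A single careful calculation verifies one relation of each shape, and the remaining relations of that shape are then obtained by applying $\sigma$ (swap $A \leftrightarrow B$), $\dagger$ (swap $L \leftrightarrow R$ and $\ell \leftrightarrow r$), or $\tau$.

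Third, the four $q$-Serre relations for $A_\ell, B_\ell, A_r, B_r$ are reduced to the $q$-Serre identity for $x,y$ in the $q$-shuffle algebra $\mathbb V$, which is displayed at the top of Section 9 and is due to Rosso. For any $v \in \mathbb V$, associativity of $\star$ gives
\begin{align*}
\bigl(A_\ell^3 B_\ell - \lbrack 3 \rbrack_q A_\ell^2 B_\ell A_\ell + \lbrack 3 \rbrack_q A_\ell B_\ell A_\ell^2 - B_\ell A_\ell^3\bigr)v
= \bigl(x \star x \star x \star y - \lbrack 3 \rbrack_q x \star x \star y \star x + \lbrack 3 \rbrack_q x \star y \star x \star x - y \star x \star x \star x\bigr) \star v,
\end{align*}
and the parenthesized factor on the right is $0$. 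The $B_\ell$ Serre relation, together with the two $A_r, B_r$ Serre relations (which have $v$ on the opposite side), follow as symmetric instances via $\sigma$, $\dagger$, $\tau$.

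The main obstacle is the $q$-exponent bookkeeping in the second step: when $A^*_L$ strips the leftmost letter from $x \star w$ or $w \star x$, the surviving interior-insertion terms must reorganize into $(x \star A^*_L w)$ or $(A^*_L w) \star x$ with the correct $q$-powers, and the boundary term must produce exactly $I$ or $K$. The economy gained by reducing each family to a single prototype via the symmetries of Lemmas \ref{lem:AAXYK}, \ref{lem:AAAABB}, \ref{lem:AAAABB2} is essential to keeping the calculation manageable.
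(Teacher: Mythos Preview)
Your proposal is correct and well organized. The paper itself does not supply a proof of this proposition; it simply records the relations and cites \cite[Proposition~9.1]{boxq} for the verification. Your strategy---grading considerations for the $K$-relations, the elementary observation that left- and right-stripping commute, associativity of $\star$ for the $\{A_\ell,B_\ell,A_r,B_r\}$ commutations, a single insertion-sum computation for each prototype in the mixed and creation--annihilation families (with the remaining cases obtained via $\sigma,\dagger,\tau$), and the reduction of the $q$-Serre relations to the identity displayed at the start of Section~9---is exactly the kind of routine verification the citation points to, and your exponent bookkeeping for the boundary terms ($q^2$ from crossing a single like letter, $q^{2r-2s}=K$ from crossing the whole word) is accurate.
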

\begin{proposition} {\rm (See \cite[Proposition~9.3]{boxq}.)}
\label{prop:Uinv}
The following relations hold on $\mathbb U$:
\begin{align*}
&(A^*_L)^3 B^*_L
- 
\lbrack 3 \rbrack_q (A^*_L)^2 B^*_L A^*_L
+
\lbrack 3 \rbrack_q A^*_L B^*_L (A^*_L)^2
-
B^*_L (A^*_L)^3 = 0,
\\
&(B^*_L)^3 A^*_L
- 
\lbrack 3 \rbrack_q (B^*_L)^2 A^*_L B^*_L
+
\lbrack 3 \rbrack_q B^*_L A^*_L (B^*_L)^2
-
A^*_L (B^*_L)^3 = 0,
\\
&(A^*_R)^3 B^*_R
- 
\lbrack 3 \rbrack_q (A^*_R)^2 B^*_R A^*_R
+
\lbrack 3 \rbrack_q A^*_R B^*_R (A^*_R)^2
-
B^*_R (A^*_R)^3 = 0,
\\
&(B^*_R)^3 A^*_R
- 
\lbrack 3 \rbrack_q (B^*_R)^2 A^*_R B^*_R
+
\lbrack 3 \rbrack_q B^*_R A^*_R (B^*_R)^2
-
A^*_R (B^*_R)^3 = 0.
\end{align*}
\end{proposition}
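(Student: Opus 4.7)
The four displayed relations are related by the symmetries $\sigma$, $\dagger$, $\tau$ of $\mathbb V$. By Lemma \ref{lem:AAAABB}, conjugation of operators in $\mathrm{End}(\mathbb V)$ by $\sigma$ interchanges $A^*_L \leftrightarrow B^*_L$ (and similarly for the right versions); conjugation by $\dagger$ interchanges $A^*_L \leftrightarrow A^*_R$ and $B^*_L \leftrightarrow B^*_R$; and conjugation by $\tau = \sigma\dagger$ combines these. Each of $\sigma$, $\dagger$, $\tau$ is a linear bijection of $\mathbb V$ preserving $\mathbb U$ (see \eqref{eq:threeDiag}), so conjugation by any of them is an algebra automorphism of $\mathrm{End}(\mathbb V)$ that carries operator identities valid on $\mathbb U$ to operator identities valid on $\mathbb U$. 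Therefore the plan is to prove only the first relation,
\begin{align*}
R := (A^*_L)^3 B^*_L - [3]_q (A^*_L)^2 B^*_L A^*_L + [3]_q A^*_L B^*_L (A^*_L)^2 - B^*_L (A^*_L)^3 = 0 \text{ on } \mathbb U,
\end{align*}
and to deduce the other three by conjugating by $\sigma$, $\dagger$, $\tau$.

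The key step will be to verify the two commutation identities
\begin{align*}
R A_\ell = q^4 A_\ell R, \qquad R B_\ell = q^{-4} B_\ell R
\end{align*}
as identities in $\mathrm{End}(\mathbb V)$, using the four commutation relations from Appendix A:
\begin{align*}
A^*_L A_\ell = I + q^2 A_\ell A^*_L, \quad A^*_L B_\ell = q^{-2} B_\ell A^*_L, \quad B^*_L A_\ell = q^{-2} A_\ell B^*_L, \quad B^*_L B_\ell = I + q^2 B_\ell B^*_L.
\end{align*}
For the first identity I would expand $R A_\ell$ term by term and push the trailing $A_\ell$ to the left past each $A^*_L$ (producing a $q^2$-scaling plus an $I$-residue) and past each $B^*_L$ (producing only a $q^{-2}$-scaling). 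The iterated commutator $(A^*_L)^3 A_\ell = q^2[3]_q (A^*_L)^2 + q^6 A_\ell (A^*_L)^3$, together with the $q$-identities $1 + q^2 + q^4 = q^2[3]_q$ and $1 + q^2 = q[2]_q$, should force the residue terms (those without a leading $A_\ell$) to cancel exactly across the four summands of $R$, leaving precisely $q^4 A_\ell R$. The computation for $R B_\ell$ is entirely analogous with the roles of $A$ and $B$ interchanged.

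Once these commutation identities are in hand, $R = 0$ on $\mathbb U$ follows by a short induction. Since every summand of $R$ ends in $A^*_L$ or $B^*_L$ and $A^*_L {\bf 1} = B^*_L {\bf 1} = 0$ by \eqref{eq:ABaction}, we have $R{\bf 1} = 0$. By Definition \ref{def:Usub}, $\mathbb U$ is spanned by vectors $L_{i_1} L_{i_2} \cdots L_{i_n} {\bf 1}$ with each $L_{i_j} \in \{A_\ell, B_\ell\}$, and the commutation identities transport $R$ through each $L_{i_j}$ at the cost of a scalar $q^{\pm 4}$, landing on $R{\bf 1} = 0$. The main obstacle will be the algebraic bookkeeping in the commutation step: the four summands of $R$ generate many residue terms of the shape $(A^*_L)^i B^*_L (A^*_L)^j$ with assorted $q$-power and $[3]_q$ prefactors, and the Serre-type cancellation hinges on the precise coefficients $1, -[3]_q, [3]_q, -1$ appearing in $R$. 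Once that single operator identity is verified, the remainder of the argument is formal.
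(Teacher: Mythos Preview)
Your proposal is correct and yields a complete, self-contained proof. The paper itself does not prove this proposition---it merely cites \cite[Proposition~9.3]{boxq}---so there is no in-paper argument to compare against. Your strategy (reduce to one relation by the symmetries of Lemma~\ref{lem:AAAABB} and the invariance of $\mathbb U$ under $\sigma,\dagger,\tau$; establish the operator identities $R A_\ell = q^4 A_\ell R$ and $R B_\ell = q^{-4} B_\ell R$ in $\mathrm{End}(\mathbb V)$ from the four commutation rules in Proposition~\ref{thm:v1}; then induct on the length of a $\star$-word in $\mathbb U$) is exactly the kind of argument one expects, and I have checked that the residue terms in both commutation computations cancel on the nose with the Serre coefficients $1,-[3]_q,[3]_q,-1$, leaving precisely $q^{\pm 4}$ times the leading $A_\ell$ (resp.\ $B_\ell$) term.

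Two small points worth making explicit when you write it up. First, the identity $R A_\ell = q^4 A_\ell R$ really does hold on all of $\mathbb V$ (since the four input relations from Proposition~\ref{thm:v1} do), even though $R$ itself vanishes only on $\mathbb U$; this is harmless for your induction but should be stated clearly. Second, for the spanning set $L_{i_1}\cdots L_{i_n}\mathbf{1}$ you should remark that $L_{i_1}\cdots L_{i_n}\mathbf{1} = z_{i_1}\star\cdots\star z_{i_n}$ with $z_{i_j}\in\{x,y\}$, so that Definition~\ref{def:Usub} gives the spanning immediately.
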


\section{Appendix B: The algebra  $U_q(\widehat{\mathfrak{sl}}_2)$}

In this appendix we recall the quantized  enveloping algebra  $U_q(\widehat{\mathfrak{sl}}_2)$. 
We will generally follow the approach of Ariki \cite[Section~3.3]{ariki}. We will refer to the matrix
\begin{align*}
{\bf A} = \begin{pmatrix} 2 & -2 \\ -2 & 2 \end{pmatrix}.
\end{align*}
We index the rows and columns of $\bf A$ by $0,1$.

\begin{definition}\label{def:Uqhat} \rm (See \cite[Definition~3.16]{ariki}.) Define the algebra $U_q(\widehat{\mathfrak{sl}}_2)$ by generators 
\begin{align*}
 K^{\pm 1}_i, \quad D^{\pm 1}, \quad  E_i, \quad F_i, \qquad \qquad i\in \lbrace 0,1\rbrace
 \end{align*}
 and the following relations. For $i,j\in \lbrace 0,1\rbrace$,
 \begin{align*}
  &K_i K^{-1}_i = K^{-1}_i K_i = 1, \qquad D D^{-1} = D^{-1} D = 1,
  \\
  &\lbrack K_i, K_j\rbrack=0, \qquad \lbrack D, K_i \rbrack=0, 
  \\
  &K_i E_j K^{-1}_i = q^{{\bf A}_{i,j}} E_j, \qquad K_i F_j K^{-1}_i = q^{-{\bf A}_{i,j}} F_j,
  \\
  &DE_0 D^{-1} = q E_0, \qquad D F_0 D^{-1} = q^{-1}F_0, 
  \\
  &\lbrack D, E_1\rbrack = 0, \qquad \lbrack D, F_1\rbrack=0,
  \\
  & \lbrack E_i, F_j \rbrack= \delta_{i,j} \frac{K_i - K^{-1}_i}{q-q^{-1}},
  \\
  &\lbrack E_i, \lbrack E_i, \lbrack E_i, E_j \rbrack _q \rbrack_{q^{-1}} \rbrack = 0,
  \qquad 
  \lbrack F_i, \lbrack F_i, \lbrack F_i, F_j \rbrack_q \rbrack_{q^{-1}} \rbrack = 0, \qquad i \not=j.
\end{align*}
\end{definition}

\begin{note}\rm The Ariki notation is related to our notation as follows.

 \bigskip
 
 \centerline{
 \begin{tabular}{cc}
{\rm Ariki notation} &  {\rm our notation}
\\
\hline
$v$ & $q$\\
$t_i$ & $K_i$ \\
$v^d $ & $D$ \\
$\alpha_j(h_i)$ & ${\bf A}_{i,j}$
\end{tabular}
}
\end{note}

\begin{note}\rm (See \cite[Section~3.3]{ariki}.)
The algebra  $U_q(\widehat{\mathfrak{sl}}_2)$ is sometimes called the quantum algebra of type $A^{(1)}_1$. 
\end{note}

\section{Appendix C: The subspaces  ${\bf U}(r,s)$}
In this appendix, we give a basis for each nonzero ${\bf U}(r,s)$  such that $r+s\leq 10$.

 \bigskip
 
 \centerline{
 \begin{tabular}{cc|c}
$r$& $s$ & basis for ${\bf U}(r,s)$
\\
\hline
$0$&$0$& $\bf 1$ 
\end{tabular}
}
 \bigskip
 
 \centerline{
 \begin{tabular}{cc|c}
$r$& $s$ &   basis for ${\bf U}(r,s)$
\\
\hline
$1$&$0$& $x$ 
\end{tabular}
}
 \bigskip
 
 \centerline{
 \begin{tabular}{cc|c}
$r$& $s$ &  basis for ${\bf U}(r,s)$
\\
\hline
$1$&$1$ & $xy$
\end{tabular}
}
 \bigskip
 
 \centerline{
 \begin{tabular}{cc|c}
$r$& $s$ &  basis for ${\bf U}(r,s)$
\\
\hline
$2$&$1$ & $xyx$
\\
$1$&$2$&$xyy$
\end{tabular}
}
 \bigskip
 
 \centerline{
 \begin{tabular}{cc|c}
$r$& $s$ &   basis for ${\bf U}(r,s)$
\\
\hline
$2$&$2$&$xyxy, xyyx$
\end{tabular}
}
 \bigskip
 
 \centerline{
 \begin{tabular}{cc|c}
$r$& $s$ &  basis for ${\bf U}(r,s)$
\\
\hline
$3$&$2$&$xyxyx$,  $xyyxx$
\\
$2$&$3$&
$xyxyy+xyyxy$
\end{tabular}
}
 \bigskip
 
 \centerline{
 \begin{tabular}{cc|c}
$r$& $s$ &   basis for ${\bf U}(r,s)$
\\
\hline
$4 $&$2$& $xyxyxx+\lbrack 3 \rbrack_q xyyxxx$
\\
$3$&$3$&
$xyxyxy$, $xyyxxy$, $xyyxyx+xyxyyx$
\end{tabular}
}
\bigskip

 \centerline{
 \begin{tabular}{cc|c}
$r$& $s$ &   basis for ${\bf U}(r,s)$
\\
\hline
$4 $&$3$& $xyxyxyx$, $ xyyxyxx+xyxyyxx+xyyxxyx$,
\\
&& $xyxyxxy+ \lbrack 3 \rbrack_q xyyxxxy+ xyyxxyx$
\\
$3$&$4$&
 $xyyxxyy$, $xyyxyxy+xyxyyxy+xyxyxyy$
\end{tabular}
}
\bigskip

 \centerline{
 \begin{tabular}{cc|c}
$r$& $s$ &   basis for ${\bf U}(r,s)$
\\
\hline
$5 $&$3$& $\lbrack 3 \rbrack_q xyyxyxxx+ \lbrack 3 \rbrack_q xyxyyxxx+ \lbrack 2 \rbrack^2_q xyyxxyxx$
\\
&& $ + 2 xyxyxyxx+ xyxyxxyx+ \lbrack 3 \rbrack_q xyyxxxyx$
\\
$4$&$4$&
 $xyxyxyxy$, $xyyxxyyx$, \\
 && $ xyyxyxxy+  xyxyyxxy+ xyyxxyxy$, \\
 && $xyxyxxyy+ \lbrack 3 \rbrack_q xyyxxxyy+ xyyxxyxy$,\\
 && $ xyyxyxyx+ xyxyyxyx+ xyxyxyyx$
\end{tabular}
}
\bigskip

 \centerline{
 \begin{tabular}{cc|c}
$r$& $s$ &  basis for ${\bf U}(r,s)$
\\
\hline
$5 $&$4$& $\lbrack 3 \rbrack_q xyyxyxxxy+ \lbrack 3 \rbrack_q xyxyyxxxy+ \lbrack 2 \rbrack^2_q xyyxxyxxy$
\\
&& $ + 2 xyxyxyxxy+ xyxyxxyxy+ \lbrack 3 \rbrack_q xyyxxxyxy$ \\
&& $ + xyyxyxxyx+xyxyyxxyx+xyyxxyxyx$,
\\ &&$xyxyxyxyx$, $ xyyxxyyxx$,
\\
&& $xyxyxxyyx+ \lbrack 3 \rbrack_q xyyxxxyyx+ xyyxxyxyx$,
\\
&& $xyyxyxyxx+ xyxyyxyxx+ xyxyxyyxx$\\
&&$+ xyyxyxxyx+ xyxyyxxyx+ xyyxxyxyx$
\\
$4$&$5$&
$xyyxyxxyy+xyxyyxxyy+xyyxxyxyy+xyyxxyyxy$,\\
&& $xyxyxyyxy+xyxyyxyxy+xyyxyxyxy+ xyxyxyxyy$, \\
&& $\lbrack 3 \rbrack_q xyxyxxyyy+ \lbrack 3 \rbrack^2_q xyyxxxyyy+ \lbrack 3 \rbrack_q xyyxxyxyy+ xyxyxyxyy$
\end{tabular}
}
\bigskip
 
 \centerline{
 \begin{tabular}{cc|c}
$r$&$s$ &    basis for ${\bf U}(r,s)$
\\
\hline
$6$&$4$&  $\lbrack 3 \rbrack_q xyyxyxxxyx+ \lbrack 3 \rbrack_q xyxyyxxxyx+ \lbrack 2 \rbrack^2_q xyyxxyxxyx$
\\
&& $ + 2 xyxyxyxxyx+ xyxyxxyxyx+ \lbrack 3 \rbrack_q xyyxxxyxyx$ \\
&& $ + xyyxyxxyxx+xyxyyxxyxx+xyyxxyxyxx$
\\
&& $+3 xyxyxyxyxx+ \lbrack 3 \rbrack_q xyyxyxyxxx+\lbrack 3 \rbrack_qxyxyyxyxxx  $ \\
&& $+             \lbrack 3 \rbrack_q xyxyxyyxxx            +      \lbrack 3 \rbrack_q xyyxyxxyxx      +        \lbrack 3 \rbrack_q xyxyyxxyxx    +    \lbrack 3 \rbrack_q xyyxxyxyxx     $,\\
&&$      \lbrack 3 \rbrack_q xyyxxyyxxx+
       xyxyxxyyxx +  \lbrack 3 \rbrack_q xyyxxxyyxx + xyyxxyxyxx$\\
$5$ &$5$& $xyxyxyxyxy$,
\\ &$$& $xyxyxyxyyx+ xyxyxyyxyx+ xyxyyxyxyx+ xyyxyxyxyx$,
\\ &$$& $xyxyxyyxxy+ xyxyyxyxxy+ xyyxyxyxxy+ xyxyyxxyxy$\\
&$$ & $+xyyxxyxyxy+ xyyxyxxyxy$,
\\ &$$ & $xyyxxyyxyx+ xyyxxyxyyx+ xyyxyxxyyx+ xyxyyxxyyx$,
\\&$$ & $xyyxxyyxxy$,
\\
&$$ & $\lbrack 3 \rbrack_q xyyxxyxyyx  + xyxyxxyyxy+ \lbrack 3 \rbrack_q xyyxxxyyxy  +  \lbrack 3 \rbrack^2_q xyyxxxyyyx $
\\
&$$& $+\lbrack 3 \rbrack_q xyxyxxyyyx + xyxyxyxyyx+ xyyxxyxyxy$,
\\& $$& $xyxyyxxyxy+ \lbrack 3 \rbrack_q xyxyyxxxyy + 2 xyxyxyxxyy + xyyxyxxyxy$
\\ &$$&$+ \lbrack 3 \rbrack_q xyyxyxxxyy +  \lbrack 2  \rbrack^2_q xyyxxyxxyy+ 2xyyxxyxyxy + \lbrack 3 \rbrack_q xyyxxxyxyy $
\\ &$$ &$+ \lbrack 3 \rbrack_q xyyxxxyyxy + xyxyxxyxyy+ xyxyxxyyxy$
\\
$4$ & $6$ &$\lbrack 2 \rbrack_q \lbrack 3 \rbrack_q xyyxyxxyyy+\lbrack 2 \rbrack_q \lbrack 3 \rbrack_q xyxyyxxyyy+\lbrack 2 \rbrack_q \lbrack 3 \rbrack_q xyyxxyxyyy$\\
&&$+\lbrack 2 \rbrack_q \lbrack 3 \rbrack_q xyyxxyyxyy +\lbrack 2 \rbrack_q xyxyxyyxyy+\lbrack 2 \rbrack_qxyxyyxyxyy $\\
&& $ +\lbrack 2 \rbrack_qxyyxyxyxyy+\lbrack 2 \rbrack_q \lbrack 3 \rbrack_q xyxyxyxyyy +\lbrack 3 \rbrack_q \lbrack 4 \rbrack_q  xyxyxxyyyy $\\
&& $
+ \lbrack 3 \rbrack^2_q \lbrack 4 \rbrack_q xyyxxxyyyy+ \lbrack 3 \rbrack_q \lbrack 4 \rbrack_q xyyxxyxyyy$
\end{tabular}
}
\bigskip

\section{Appendix D: Some matrix representations}
In this appendix we consider the $U_q(\widehat{\mathfrak{sl}}_2)$-module $\bf U$ from Definition \ref{def:Lambda}.
We display the matrices that represent the actions of $E_0, F_0, K_0, E_1,F_1, K_1, D$ on the bases in Appendix C.
\medskip

\noindent On  ${\bf U}(0,0)$:
\begin{align*}
&K_0: \begin{pmatrix} q \end{pmatrix}, \qquad 
K_1: \begin{pmatrix} 1 \end{pmatrix}, \qquad
D: \begin{pmatrix} 1 \end{pmatrix}.
\end{align*}

\noindent  From  ${\bf U}(1,0)$ to  ${\bf U}(0,0)$:
\begin{align*}
E_0: \begin{pmatrix}
1
\end{pmatrix},
\qquad
E_1: \begin{pmatrix}
0
\end{pmatrix}
\end{align*}

\noindent  From  ${\bf U}(0,0)$ to  ${\bf U}(1,0)$:
\begin{align*}
F_0: \begin{pmatrix}
1
\end{pmatrix},
\qquad
F_1: \begin{pmatrix}
0
\end{pmatrix}
\end{align*}

\noindent On  ${\bf U}(1,0)$:
\begin{align*}
&K_0: \begin{pmatrix} q^{-1} \end{pmatrix}, \qquad 
K_1: \begin{pmatrix} q^2 \end{pmatrix}, \qquad
D: \begin{pmatrix} q^{-1} \end{pmatrix}.
\end{align*}

\noindent  From  ${\bf U}(1,1)$ to  ${\bf U}(1,0)$:
\begin{align*}
E_0: \begin{pmatrix}
0
\end{pmatrix},
\qquad
E_1: \begin{pmatrix}
1
\end{pmatrix}
\end{align*}
\noindent  From  ${\bf U}(1,0)$ to  ${\bf U}(1,1)$:
\begin{align*}
F_0:\begin{pmatrix}
0
\end{pmatrix},
\qquad
F_1:\begin{pmatrix}
\lbrack 2 \rbrack_q
\end{pmatrix}
\end{align*}

\noindent On  ${\bf U}(1,1)$:
\begin{align*}
&K_0: \begin{pmatrix} q \end{pmatrix}, \qquad 
K_1: \begin{pmatrix} 1 \end{pmatrix}, \qquad
D: \begin{pmatrix} q^{-1} \end{pmatrix}.
\end{align*}

\noindent  From  ${\bf U}(2,1)+{\bf U}(1,2)$ to  ${\bf U}(1,1)$:
\begin{align*}
E_0: \begin{pmatrix}
1& 0
\end{pmatrix},
\qquad 
E_1:\begin{pmatrix}0& 1
\end{pmatrix}
\end{align*}

\noindent  From  ${\bf U}(1,1)$ to  ${\bf U}(2,1)+{\bf U}(1,2)$:
\begin{align*}
F_0: \begin{pmatrix}
1\\
 0
\end{pmatrix},
\qquad
F_1:\begin{pmatrix}
0\\
\lbrack 2 \rbrack_q
\end{pmatrix}
\end{align*}

\noindent On  ${\bf U}(2,1)+{\bf U}(1,2)$:
\begin{align*}
&K_0: {\rm diag}(q^{-1},q^3), \qquad
K_1: {\rm diag}(q^2,q^{-2}), \qquad
D:{\rm diag}(q^{-2}, q^{-1}).
\end{align*}

\noindent  From  ${\bf U}(2,2)$ to  ${\bf U}(2,1)+{\bf U}(1,2)$:
\begin{align*}
E_0: \begin{pmatrix}
0& 0\\
 0& 1
\end{pmatrix},
\quad
E_1:\begin{pmatrix}
1& 0\\
 0& 0
\end{pmatrix}
\end{align*}

\noindent  From  ${\bf U}(2,1)+{\bf U}(1,2)$ to  ${\bf U}(2,2)$:
\begin{align*}
F_0: \begin{pmatrix}
0&1\\
0 &\lbrack 3 \rbrack_q
\end{pmatrix},
\quad
F_1:\begin{pmatrix}
\lbrack 2 \rbrack_q & 0 \\
\lbrack 2 \rbrack_q &0
\end{pmatrix}
\end{align*}

\noindent On  ${\bf U}(2,2)$:
\begin{align*}
&K_0: {\rm diag}(q,q), \qquad
K_1: {\rm diag}(1,1), \qquad
D:{\rm diag}(q^{-2}, q^{-2}).
\end{align*}

\noindent  From  ${\bf U}(3,2)+{\bf U}(2,3)$ to  ${\bf U}(2,2)$:
\begin{align*}
E_0: \begin{pmatrix}
1& 0& 0\\
 0 &1& 0
\end{pmatrix},
\quad
E_1:\begin{pmatrix}
0& 0& 1\\
 0& 0& 1
\end{pmatrix}
\end{align*}

\noindent  From  ${\bf U}(2,2)$ to  ${\bf U}(3,2)+{\bf U}(2,3)$:
\begin{align*}
F_0:\begin{pmatrix}
1& 1\\
 0 &\lbrack 2 \rbrack_q^2\\
  0& 0
\end{pmatrix},
\quad
F_1:\begin{pmatrix}
0& 0\\
 0 &0\\
  \lbrack 2 \rbrack_q& 0
\end{pmatrix}
\end{align*}

\noindent On  ${\bf U}(3,2)+{\bf U}(2,3)$:
\begin{align*}
&K_0: {\rm diag}(q^{-1}, q^{-1},q^3), \qquad
K_1: {\rm diag}( q^2,q^2,q^{-2}), \qquad
D:{\rm diag}(q^{-3}, q^{-3}, q^{-2}).
\end{align*}

\noindent  From  ${\bf U}(4,2)+{\bf U}(3,3)$ to  ${\bf U}(3,2)+{\bf U}(2,3)$:
\begin{align*}
E_0: \begin{pmatrix}
1& 0& 0& 0\\
 \lbrack 3 \rbrack_q & 0& 0& 0\\
  0& 0&0& 1
\end{pmatrix},
\quad
E_1: \begin{pmatrix}
0& 1& 0& 0\\
 0& 0& 1& 0\\
  0& 0& 0& 0
\end{pmatrix}
\end{align*}

\noindent  From  ${\bf U}(3,2)+{\bf U}(2,3)$ to  ${\bf U}(4,2)+{\bf U}(3,3)$:
\begin{align*}
F_0: \begin{pmatrix}
0 &1& 0\\
 0& 0& 2 \\
 0& 0 &\lbrack 2 \rbrack_q^2\\
  0& 0 &\lbrack 3 \rbrack_q
\end{pmatrix},
\quad
F_1: \begin{pmatrix}
0& 0& 0\\
 \lbrack 2 \rbrack_q& 0& 0\\
  0& \lbrack 2 \rbrack_q& 0\\
   \lbrack 2 \rbrack_q& 0& 0
\end{pmatrix}
\end{align*}

\noindent On   ${\bf U}(4,2)+{\bf U}(3,3)$:
\begin{align*}
&K_0: {\rm diag}(q^{-3}, q,q,q), \qquad
K_1: {\rm diag}( q^4,1,1,1), \qquad
D:{\rm diag}(q^{-4}, q^{-3}, q^{-3}, q^{-3}).
\end{align*}

\noindent  From  ${\bf U}(4,3)+{\bf U}(3,4)$ to  ${\bf U}(4,2)+{\bf U}(3,3)$:
\begin{align*}
E_0: \begin{pmatrix}
0 &0& 0& 0& 0\\
 1 &0 &0 &0& 0\\
  0 &1& 1& 0& 0\\
   0& 1& 0& 0& 0
\end{pmatrix},
\quad 
E_1: \begin{pmatrix}
0& 0 &1 &0 &0\\
 0 &0 &0 &0 &1\\
  0& 0& 0& 1& 0\\
   0& 0& 0 &0 &1
\end{pmatrix}
\end{align*}
\noindent  From  ${\bf U}(4,2)+{\bf U}(3,3)$ to  ${\bf U}(4,3)+{\bf U}(3,4)$:
\begin{align*}
F_0:\begin{pmatrix}
0 &1& 0& 2\\
 0 &0 &0 &\lbrack 2 \rbrack_q^2\\
  0& 0& 1& 0 \\
  0& 0& 0& 0\\
   0& 0& 0& 0
\end{pmatrix},
\quad 
F_1:\begin{pmatrix}
\lbrack 2 \rbrack_q& 0& 0& 0\\
 \lbrack 2 \rbrack_q& 0& 0& 0\\
  \lbrack 4 \rbrack_q& 0& 0& 0\\
   0& 0& \lbrack 2 \rbrack_q &0\\
    0& \lbrack 2 \rbrack_q& 0& 0
\end{pmatrix}
\end{align*}

\noindent On   ${\bf U}(4,3)+{\bf U}(3,4)$:
\begin{align*}
&K_0: {\rm diag}(q^{-1}, q^{-1},q^{-1},q^3,q^3), \qquad
K_1: {\rm diag}( q^2,q^2,q^2,q^{-2}, q^{-2}), \\
&D:{\rm diag}(q^{-4}, q^{-4}, q^{-4}, q^{-3}, q^{-3}).
\end{align*}

\noindent From ${\bf U}(5,3)+{\bf U}(4,4)$ to  ${\bf U}(4,3)+{\bf U}(3,4)$:
\begin{align*}
E_0: \begin{pmatrix}
2& 0& 0& 0& 0& 0\\
 \lbrack 3 \rbrack_q &0 &0 &0 &0& 0\\
  1 &0 &0& 0& 0& 0 \\
  0& 0& 1& 0& 0& 0 \\
  0& 0& 0& 0& 0& 1
\end{pmatrix},
\quad
E_1: \begin{pmatrix}
0 &1 &0 &0 &0& 0\\
 0 &0 &0 &1& 0& 0\\
  0 &0 &0 &0 &1& 0\\
   0& 0& 0& 0& 0& 0 \\
   0 &0 &0 &0 &0& 0
\end{pmatrix}
\end{align*}
\noindent From ${\bf U}(4,3)+{\bf U}(3,4)$ to  ${\bf U}(5,3)+{\bf U}(4,4)$:
\begin{align*}
F_0: \begin{pmatrix}
0 &1 &0 &0 &0\\
 0 &0 &0& 0& 3 \\
 0 &0& 0& \lbrack 3 \rbrack_q& 0\\
  0& 0& 0& 0& \lbrack 2 \rbrack_q^2\\ 
  0 &0 &0 &1 &0\\
   0 &0 &0 &0& \lbrack 3 \rbrack_q
\end{pmatrix},
\quad
F_1:\begin{pmatrix} 
0 &0 &0 &0 &0\\
 \lbrack 2 \rbrack_q& 0& \lbrack 2 \rbrack_q& 0& 0 \\
 0& \lbrack 2 \rbrack_q& \lbrack 2 \rbrack_q& 0 &0\\
  0 &\lbrack 2 \rbrack_q& \lbrack 2 \rbrack_q&0 &0\\
   0& 0& \lbrack 2 \rbrack_q\lbrack 3 \rbrack_q& 0&0\\
   \lbrack 2 \rbrack_q& 0 &0 &0& 0
\end{pmatrix}
\end{align*}

\noindent On    ${\bf U}(5,3)+{\bf U}(4,4)$:
\begin{align*}
&K_0: {\rm diag}(q^{-3}, q,q,q,q,q), \qquad
K_1: {\rm diag}( q^4,1,1,1,1,1),\\
&D:{\rm diag}(q^{-5}, q^{-4}, q^{-4}, q^{-4}, q^{-4}, q^{-4}).
\end{align*}

\noindent From ${\bf U}(5,4)+{\bf U}(4,5)$ to ${\bf U}(5,3)+{\bf U}(4,4)$:
\begin{align*}
E_0:\begin{pmatrix} 
0& 0& 0& 0& 0& 0& 0& 0\\
 0& 1& 0& 0& 0& 0& 0& 0\\
  0& 0& 1& 0& 0& 0& 0& 0\\
   1& 0& 0& 0& 1& 0& 0& 0\\
    0& 0& 0& 1& 0& 0& 0& 0\\
     0& 0& 0& 0& 1& 0& 0& 0    
\end{pmatrix},
\quad
E_1: \begin{pmatrix} 
1& 0& 0& 0& 0& 0& 0& 0\\
 0& 0& 0& 0& 0& 0& 1& 1\\
  0& 0& 0& 0& 0& 1& 0& 0\\
   0 &0 &0& 0& 0& 1& 0& 0\\
    0& 0& 0& 0& 0& 0& 0& \lbrack 3 \rbrack_q \\
     0 &0 &0 &0 &0 &0 &1 &0
\end{pmatrix}
\end{align*}

\noindent From  ${\bf U}(5,3)+{\bf U}(4,4)$ to  ${\bf U}(5,4)+{\bf U}(4,5)$:
\begin{align*} 
F_0: \begin{pmatrix}
0&0& 0& 1& 0& 0\\
 0& 1& 0& 0& 0& 3\\
  0& 0&  \lbrack 2 \rbrack_q^2& 0& 0& 0\\
   0 &0 &1 &0 &1 &0\\
    0 &0 &0& 0& 0&  \lbrack 2 \rbrack_q^2\\
     0& 0& 0& 0& 0& 0\\
      0& 0& 0 &0 &0 &0\\
       0 &0 &0 &0 &0 &0
\end{pmatrix},
\quad 
 F_1: \begin{pmatrix} 
 \lbrack 4 \rbrack_q & 0& 0& 0& 0& 0\\
  3 \lbrack 2 \rbrack_q& 0& 0& 0& 0& 0\\
   \lbrack 2 \rbrack_q^3& 0& 0& 0& 0& 0\\
    \lbrack 2 \rbrack_q \lbrack 3 \rbrack_q& 0& 0& 0& 0& 0\\ 
    2 \lbrack 2 \rbrack_q& 0& 0& 0& 0& 0\\
     0& 0& 0&  \lbrack 2 \rbrack_q&  \lbrack 2 \rbrack_q& 0\\ 
     0&  \lbrack 2 \rbrack_q& 0 &0 &0 &0\\
      0& 0 &0 &0 & \lbrack 2 \rbrack_q& 0
 \end{pmatrix}
\end{align*}

\noindent On   ${\bf U}(5,4)+{\bf U}(4,5)$:
\begin{align*}
&K_0: {\rm diag}(q^{-1}, q^{-1}, q^{-1},q^{-1},q^{-1},q^3,q^3,q^3), \qquad
K_1: {\rm diag}( q^2, q^2, q^2,q^2, q^2,q^{-2}, q^{-2}, q^{-2}),\\
&D:{\rm diag}(q^{-5}, q^{-5}, q^{-5}, q^{-5}, q^{-5}, q^{-4}, q^{-4}, q^{-4}).
\end{align*}

\noindent From ${\bf U}(6,4)+{\bf U}(5,5)+{\bf U}(4,6)$ to ${\bf U}(5,4)+{\bf U}(4,5)$:
 \begin{align*}
 E_0:  \begin{pmatrix}
 1& 0& 0& 0& 0& 0& 0& 0& 0& 0\\
  3& 0& 0& 0& 0& 0& 0& 0& 0& 0\\
   0& \lbrack 3 \rbrack_q & 0& 0& 0& 0& 0& 0& 0& 0\\
    0& 1& 0& 0& 0& 0& 0& 0& 0& 0 \\
    \lbrack 3 \rbrack_q& 0& 0& 0& 0& 0& 0& 0& 0& 0\\
     0& 0& 0& 0& 0& 1& 0& 0& 0& 0\\
      0& 0& 0& 1& 0& 0& 0& 0& 0& 0\\
       0& 0& 0& 0& 0& 0& 0& 1& 0& 0\\
       \end{pmatrix},
       \quad E_1:
       \begin{pmatrix}
       0& 0& 0& 0& 0& 0& 0& 0& 1& 0 \\
       0& 0& 1& 0& 0& 0& 0& 0& 0& 0\\
        0& 0& 0& 0& 0& 0& 1& 0& 0& 0 \\
        0& 0& 0& 0& 0& 0& 0& 1& 1& 0 \\
        0& 0& 0& 0& 1& 0& 0& 0& 0& 0\\ 
        0& 0& 0& 0& 0& 0& 0& 0& 0& \lbrack 2 \rbrack_q \lbrack 3 \rbrack_q \\
        0& 0& 0& 0& 0& 0& 0& 0& 0& \lbrack 2 \rbrack_q  \\
        0&0& 0& 0& 0& 0& 0& 0& 0& \lbrack 4 \rbrack_q \\
       \end{pmatrix}
 \end{align*}
 \noindent From  ${\bf U}(5,4)+{\bf U}(4,5)$ to ${\bf U}(6,4)+{\bf U}(5,5)+{\bf U}(4,6)$:
 \begin{align*}
 F_0: \begin{pmatrix}
 0& 0& 0& 0& 1& 0 &0 &0 \\
 0& 0&1& 0& 0& 0& 0& 0\\
  0& 0& 0& 0&0& 0& 4& 1\\
   0& 0& 0& 0& 0& 0& \lbrack 3 \rbrack_q & 0\\
   0& 0& 0& 0& 0& 0& \lbrack 2 \rbrack^2_q & 0\\
   0& 0& 0& 0& 0& \lbrack 3 \rbrack_q & 0& 0\\ 
   0& 0& 0& 0& 0& \lbrack 2 \rbrack^2_q & 0& 0\\
    0& 0& 0& 0& 0& 0& 0& \lbrack 3 \rbrack_q \\
    0& 0& 0& 0& 0& 1& 0& 0\\
     0& 0& 0& 0& 0& 0& 0& 0 \end{pmatrix},
 \quad
F_1: \begin{pmatrix} 
0& 0& 0& 0& 0& 0& 0& 0\\
 0& 0& 0& 0& 0& 0& 0& 0\\
 3 \lbrack 2 \rbrack_q & \lbrack 2 \rbrack_q & 0& 0& 0& 0& 0& 0\\
   0& \lbrack 2 \rbrack_q & 0& 0& 0& 0& 0& 0 \\
   2 \lbrack 2 \rbrack_q  &0 &0& 0&\lbrack 2 \rbrack_q  & 0& 0& 0 \\
   \lbrack 2 \rbrack_q& 0& 0& \lbrack 2 \rbrack_q&\lbrack 2 \rbrack_q& 0& 0& 0\\
    \lbrack 2 \rbrack_q^3 &0& \lbrack 2 \rbrack_q& 0& 0& 0& 0& 0\\
     0& 0& 0& \lbrack 2 \rbrack_q& 0& 0& 0& 0 \\
     \lbrack 2 \rbrack_q\lbrack 3 \rbrack_q& 0& 0& 0& 0& 0& 0& 0\\
      0& 0& 0& 0& 0& 0& 0& 1\end{pmatrix}
\end{align*}

\noindent On  ${\bf U}(6,4)+{\bf U}(5,5)+{\bf U}(4,6)$:
\begin{align*}
&K_0: {\rm diag}(q^{-3}, q^{-3}, q,q,q,q,q,q,q,q^5), \qquad
K_1: {\rm diag}( q^4, q^4, 1,1,1,1,1,1,1,q^{-4}),\\
&D:{\rm diag}(q^{-6}, q^{-6}, q^{-5}, q^{-5}, q^{-5}, q^{-5}, q^{-5}, q^{-5}, q^{-5}, q^{-4}).
\end{align*}

\section{Appendix E: Some linear algebra}

In this appendix we consider the following situation. Let $V$ denote an infinite-dimensional vector space. Let $S: V \to V$ and $T: V \to V$ denote $\mathbb F$-linear maps. Assume that
$S$ is locally nilpotent and
\begin{align}
ST - q^2 TS = I.
\label{eq:Weyl}
\end{align}       
We will  show that $S$ is surjective and $T$ is injective. We remark that the surjectivity of $S$ is used in the proof of Lemma \ref{lem:ffln}, and the injectivity of $T$ is used to obtain
the surjectivity of $S$.
\begin{lemma} \label{lem:step1} The map $T$ is injective.
\end{lemma}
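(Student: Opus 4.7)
The plan is to suppose for contradiction that there exists $0\neq v\in \ker T$, and to use the local nilpotence of $S$ together with an iterated form of the relation \eqref{eq:Weyl} to produce a contradiction. The idea is to show that, when applied to a vector annihilated by $T$, the operator $TS^n$ behaves as a nonzero scalar multiple of $S^{n-1}$, so that $T$ cannot annihilate any nonzero power $S^n v$ unless it annihilates the previous one. Comparing this with the smallest $N$ for which $S^N v=0$ then yields a contradiction.

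Concretely, first I would rewrite \eqref{eq:Weyl} as $TS = q^{-2}(ST-I)$, and establish by induction on $n\geq 1$ the identity
\begin{align*}
TS^n v \;=\; \alpha_n\, S^{n-1} v \qquad \text{for every } v\in \ker T,
\end{align*}
where the scalars $\alpha_n \in \mathbb F$ satisfy $\alpha_1=-q^{-2}$ and the recursion $\alpha_{n+1}=q^{-2}(\alpha_n-1)$. The base case is immediate from \eqref{eq:Weyl} applied to $v$, which gives $-q^2 TSv=v$. For the inductive step, using $TS = q^{-2}(ST-I)$ and the inductive hypothesis $T(S^n v)=\alpha_n S^{n-1}v$, one computes
\begin{align*}
TS^{n+1}v = (TS)(S^n v) = q^{-2}\bigl(S\cdot T S^n v - S^n v\bigr) = q^{-2}(\alpha_n-1)\, S^n v.
\end{align*}
Solving the recursion gives $\alpha_n = -\sum_{k=1}^{n} q^{-2k}$, equivalently $\alpha_n(q^2-1)=q^{-2n}-1$, so $\alpha_n\neq 0$ for all $n\geq 1$ because $q$ is not a root of unity.

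With this in hand, the finish is short. Assume $0\neq v\in \ker T$. By the local nilpotence of $S$, let $N$ be the smallest positive integer with $S^N v = 0$; minimality gives $S^{N-1}v\neq 0$. On the one hand, $TS^N v = T(0)=0$. On the other hand, the iterated identity yields $TS^N v = \alpha_N S^{N-1}v$, which is nonzero since $\alpha_N\neq 0$ and $S^{N-1}v\neq 0$. This contradiction forces $\ker T = 0$, so $T$ is injective.

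There is no real obstacle here: the only thing to check carefully is the recursion for $\alpha_n$ and the nonvanishing of $\alpha_n$, both of which are routine and rely only on $q$ not being a root of unity (and not on the infinite-dimensionality of $V$, which is instead needed later for the surjectivity of $S$).
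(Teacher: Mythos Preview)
Your proof is correct and follows essentially the same approach as the paper: both derive the identity $TS^n v = \alpha_n S^{n-1}v$ for $v\in\ker T$ by induction from \eqref{eq:Weyl}, observe that the scalar is nonzero because $q$ is not a root of unity, and then use local nilpotence of $S$ to force $v=0$. The paper writes the scalar in the closed form $-q^{-n-1}[n]_q$ (which agrees with your $\alpha_n$) and phrases the conclusion as a backward chain $S^n v=0\Rightarrow\cdots\Rightarrow v=0$ rather than a minimal-$N$ contradiction, but this is purely cosmetic.
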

\begin{proof} Let $v \in V$ such that $Tv=0$. We show that $v=0$. For $n\geq 1$, use \eqref{eq:Weyl} and induction on $n$ to obtain
\begin{align}
T S^n v = -q^{-n-1} \lbrack n \rbrack_q S^{n-1} v.
\label{eq:back}
\end{align}
Since $S$ is locally nilpotent, there exists $n\geq 1$ such that $S^nv=0$.
By applying  \eqref{eq:back} repeatedly, we see that each of $S^nv, S^{n-1}v, \ldots Sv, v$ is equal to 0. In particular $v=0$.
\end{proof}

\noindent For $n \in \mathbb N$, we adjust \eqref{eq:Weyl} to obtain
\begin{align}\label{eq:fact}
ST - q^n \lbrack n+1 \rbrack_q I= q^2 \bigl(TS - q^{n-1} \lbrack n \rbrack_q I\bigr).
\end{align}
Therefore, the kernel of $ST - q^n \lbrack n+1 \rbrack_q I$ is equal to the kernel of $TS - q^{n-1} \lbrack n \rbrack_q I$.  Let $V_n$ denote this common kernel.
By construction
\begin{align}
\bigl( ST - q^n \lbrack n+1 \rbrack_q I \bigr) V_n = 0, \qquad \qquad 
\bigl(TS - q^{n-1} \lbrack n \rbrack_q I \bigr) V_n = 0.
\label{eq:twoV}
\end{align}
Note that the sum $\sum_{n \in \mathbb N} V_n$ is direct.
 For notational convenience define $V_{-1}=0$.
%%Note that $W_0$ is equal to the kernel of $S$. 
%%%Note that the sum $\sum_{n \in \mathbb N} W_n$ is direct.

\begin{lemma}\label{lem:stepA} We have ${\rm ker} (S)=V_0$.
\end{lemma}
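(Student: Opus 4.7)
The plan is to rewrite $V_0$ in a form that makes the claim almost immediate, and then invoke Lemma \ref{lem:step1}. Specializing the factorization \eqref{eq:fact} to $n=0$, and using the conventions $\lbrack 0\rbrack_q = 0$ and $\lbrack 1\rbrack_q = 1$, one has the operator identity $ST - I = q^2\, TS$. Hence the two operators whose common kernel defines $V_0$ coincide up to the nonzero scalar $q^2$, and $V_0 = \ker(TS)$. This reduces the lemma to verifying $\ker(TS) = \ker S$.

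One inclusion is trivial: if $Sv = 0$ then $TSv = 0$, so $\ker S \subseteq \ker(TS) = V_0$. For the reverse inclusion I would apply Lemma \ref{lem:step1}: if $v \in V_0$, then $TSv = 0$ shows $Sv \in \ker T$, and since $T$ is injective this forces $Sv = 0$, giving $v \in \ker S$.

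No substantive obstacle is anticipated; the whole argument is a direct consequence of the $q$-Weyl relation \eqref{eq:Weyl} (through its consequence \eqref{eq:fact} at $n=0$) together with the injectivity of $T$ proved in Lemma \ref{lem:step1}. The only point requiring care is the bookkeeping with $\lbrack 0\rbrack_q = 0$, which is precisely what collapses the $TS$-side of \eqref{eq:twoV} at $n=0$ into the clean condition $TSv = 0$.
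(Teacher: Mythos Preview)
Your proof is correct and follows essentially the same approach as the paper: both identify $V_0 = \ker(TS)$ via the $n=0$ case of \eqref{eq:fact}, and then conclude $\ker(TS)=\ker(S)$ from the injectivity of $T$ established in Lemma~\ref{lem:step1}. You spell out the two inclusions explicitly, whereas the paper compresses this into the single observation that $T$ injective forces $\ker(S)=\ker(TS)$, but the argument is the same.
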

\begin{proof} By the discussion below \eqref{eq:fact}, we obtain ${\rm ker} (TS)= V_0$. The map $T$ is injective by Lemma \ref{lem:step1}, so ${\rm ker}(S) = {\rm ker}(TS)$. Therefore ${\rm ker}(S)=V_0$.
\end{proof}

\begin{lemma} \label{lem:step3} For $n \in \mathbb N$ we have
\begin{align*}
S V_n \subseteq V_{n-1}, \qquad \qquad T V_n \subseteq V_{n+1}.
\end{align*}
\end{lemma}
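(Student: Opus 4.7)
The plan is to verify the two defining eigenequations of $V_{n-1}$ (respectively $V_{n+1}$) on the vector $Sv$ (respectively $Tv$) by a direct computation that uses only the identities \eqref{eq:twoV} for $v\in V_n$ together with the Weyl-type relation \eqref{eq:Weyl}. So fix $v\in V_n$; by \eqref{eq:twoV} we have $STv=q^n[n+1]_q v$ and $TSv=q^{n-1}[n]_q v$.

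I would first handle $Sv\in V_{n-1}$. One of the two equations, namely $ST(Sv)=q^{n-1}[n]_q\,Sv$, drops out immediately from the operator identity $ST\cdot S=S\cdot TS$ applied to $v$. For the other, $TS(Sv)=q^{n-2}[n-1]_q\,Sv$, I would rewrite \eqref{eq:Weyl} as $TS=q^{-2}(ST-I)$, apply it to $Sv$, and reduce the computation to a scalar identity in $q$ equivalent to $q^n[n+1]_q-q^{n+1}[n]_q=1$, which is a one-line consequence of the definition of $[n]_q$.

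The plan for $Tv\in V_{n+1}$ is the mirror image. The identity $TS(Tv)=T(STv)=q^n[n+1]_q\,Tv$ falls out directly from \eqref{eq:twoV}. For the remaining equation $ST(Tv)=q^{n+1}[n+2]_q\,Tv$, I would use \eqref{eq:Weyl} in the form $ST=q^2TS+I$ applied to $Tv$; the same scalar identity (with $n$ shifted) closes it.

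None of this is a real obstacle; the whole argument is mechanical bookkeeping between \eqref{eq:twoV} and \eqref{eq:Weyl}. The only mild subtlety is the boundary case $n=0$ for the first inclusion, where $V_{-1}$ is declared to be $0$, so one must genuinely produce $Sv=0$ rather than merely verify eigenequations. Here I would simply note that $v\in V_0$ forces $TSv=q^{-1}[0]_q v=0$, whence $Sv=0$ by the injectivity of $T$ proved in Lemma \ref{lem:step1}.
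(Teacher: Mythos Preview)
Your argument is correct and is essentially the paper's approach: both you and the paper verify $(ST-q^{n-1}[n]_qI)Sv=S(TS-q^{n-1}[n]_qI)v=0$ and $(TS-q^{n}[n+1]_qI)Tv=T(ST-q^{n}[n+1]_qI)v=0$. The only difference is that you go on to verify the \emph{second} defining eigenequation in each case via \eqref{eq:Weyl} and a $q$-identity, whereas the paper does not, because the discussion after \eqref{eq:fact} already established that the two kernels coincide, so checking one equation suffices. Your boundary case $n=0$ is exactly the content of Lemma~\ref{lem:stepA}, which the paper simply cites.
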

\begin{proof} First we verify $SV_n \subseteq V_{n-1}$. For $n=0$ this holds by Lemma \ref{lem:stepA}. For $n\geq 1$ we 
use \eqref{eq:twoV} to obtain
\begin{align*}
\bigl(ST-q^{n-1} \lbrack n \rbrack_q I \bigr) SV_n = S \bigl(TS - q^{n-1} \lbrack n \rbrack_q I\bigr) V_n = S 0  = 0,
\end{align*}
so $SV_n \subseteq V_{n-1}$. Next we verify $T V_n \subseteq V_{n+1}$. For $n\geq 0$ we have
\begin{align*}
\bigl(TS-q^{n} \lbrack n+1 \rbrack_q I \bigr) TV_n = T \bigl(ST - q^{n} \lbrack n+1 \rbrack_q I\bigr) W_n = T0  = 0,
\end{align*}
so $TV_n \subseteq V_{n+1}$. 
\end{proof}

\begin{lemma}\label{lem:step4} For $n\geq 1$ the following  maps are inverses:
\begin{align}
\label{eq:inv1}
S: V_n \to V_{n-1}, \qquad \qquad q^{1-n} \lbrack n \rbrack^{-1}_q T: V_{n-1} \to V_n.
\end{align}
\end{lemma}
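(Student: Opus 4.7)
\medskip

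\noindent\textbf{Proof proposal.} The plan is to verify directly that the two displayed maps are two-sided inverses by composing them and using the defining relations \eqref{eq:twoV} for $V_n$ and $V_{n-1}$. First, Lemma \ref{lem:step3} guarantees that both maps are well-defined between the indicated subspaces, so nothing has to be checked about domains and codomains.

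For the composition $V_n \xrightarrow{S} V_{n-1} \xrightarrow{q^{1-n}[n]_q^{-1}T} V_n$, I would apply the second equation in \eqref{eq:twoV}, which says $TS = q^{n-1}[n]_q I$ on $V_n$. Hence
\[
\bigl(q^{1-n}[n]_q^{-1}T\bigr) \circ S \;=\; q^{1-n}[n]_q^{-1}\, TS \;=\; q^{1-n}[n]_q^{-1}\cdot q^{n-1}[n]_q\,I \;=\; I
\]
on $V_n$. Symmetrically, for the composition $V_{n-1} \xrightarrow{q^{1-n}[n]_q^{-1}T} V_n \xrightarrow{S} V_{n-1}$, I would apply the first equation in \eqref{eq:twoV} with $n$ replaced by $n-1$, namely $ST = q^{n-1}[n]_q I$ on $V_{n-1}$, to obtain
\[
S \circ \bigl(q^{1-n}[n]_q^{-1}T\bigr) \;=\; q^{1-n}[n]_q^{-1}\, ST \;=\; q^{1-n}[n]_q^{-1}\cdot q^{n-1}[n]_q\,I \;=\; I
\]
on $V_{n-1}$.

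There is no real obstacle here: the entire proof is a direct calculation from \eqref{eq:twoV}, with the only subtlety being to make sure that the scalar $q^{n-1}[n]_q$ arising on $V_n$ (from the $TS$ relation) matches the scalar $q^{n-1}[n]_q$ arising on $V_{n-1}$ (from the $ST$ relation with index shifted by one), so that the single normalizing constant $q^{1-n}[n]_q^{-1}$ inverts both. Since $q$ is not a root of unity and $n \geq 1$, the scalar $[n]_q$ is nonzero, so this normalization is legitimate.
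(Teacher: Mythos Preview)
Your proof is correct and follows essentially the same approach as the paper. The paper's argument is simply a compressed version of yours: it abbreviates $T_n = q^{1-n}[n]_q^{-1}T$ and then cites \eqref{eq:twoV} to obtain $(ST_n - I)V_{n-1}=0$ and $(T_nS - I)V_n = 0$, which is precisely the two compositions you wrote out in full.
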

\begin{proof} %%Abbreviate $\tilde T = q^{1-n} \lbrack n \rbrack^{-1}_q T$.
 By \eqref{eq:twoV} we have
$(S T_n-I)V_{n-1}=0$ and $(T_n S-I)V_n=0$, where  $T_n = q^{1-n} \lbrack n \rbrack^{-1}_q T$.
\end{proof}

\begin{lemma} \label{lem:step5} For $n\geq 1$ the maps
\begin{align*}
S: V_n \to V_{n-1}, \qquad \qquad T: V_{n-1} \to V_n
\end{align*}
are bijections.
\end{lemma}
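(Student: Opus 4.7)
The plan is to read off the lemma directly from Lemma \ref{lem:step4}, with only the observation that the scalar appearing there is nonzero.

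First I would recall from Lemma \ref{lem:step4} that, for $n \geq 1$, the maps
\[
S: V_n \to V_{n-1}, \qquad T_n := q^{1-n}\lbrack n\rbrack_q^{-1} T: V_{n-1} \to V_n
\]
are inverse to each other in the sense that $S T_n$ acts as the identity on $V_{n-1}$ and $T_n S$ acts as the identity on $V_n$. Since two linear maps whose compositions in both orders are identity maps are automatically mutual bijective inverses, I would conclude at once that $S: V_n \to V_{n-1}$ is a bijection with inverse $T_n$, and in particular $T_n : V_{n-1} \to V_n$ is a bijection.

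Next I would handle the rescaling. Recall from Section 2 that $q \in \mathbb F$ is nonzero and not a root of unity, so $\lbrack n\rbrack_q \neq 0$ for $n\geq 1$. Hence the scalar $q^{1-n}\lbrack n\rbrack_q^{-1}$ is a nonzero element of $\mathbb F$, and $T = q^{n-1}\lbrack n\rbrack_q\, T_n$ differs from $T_n$ only by a nonzero scalar. Therefore $T : V_{n-1} \to V_n$ is also a bijection, completing the proof.

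There is no real obstacle here; the content was already packaged in Lemma \ref{lem:step4}. The only subtlety worth flagging is the mild appeal to the standing hypothesis that $q$ is not a root of unity, which is what guarantees $\lbrack n \rbrack_q \neq 0$ and hence that passing from $T_n$ to $T$ preserves bijectivity. The proof should be just two or three lines.
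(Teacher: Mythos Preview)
Your proposal is correct and follows essentially the same approach as the paper, which simply cites Lemma~\ref{lem:step4}; you have merely unpacked that citation by noting that mutual inverses are bijections and that the nonzero scalar $q^{1-n}\lbrack n\rbrack_q^{-1}$ (nonzero since $q$ is not a root of unity) does not affect bijectivity of $T$.
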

\begin{proof} By Lemma \ref{lem:step4}.
\end{proof}

\begin{lemma} \label{lem:step6} For $n \in \mathbb N$,
\begin{align} \label{eq:kneed}
{\rm ker} (S^{n+1}) = V_0+ V_1 + \cdots + V_n.
\end{align}
\end{lemma}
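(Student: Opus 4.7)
The plan is to prove both inclusions of \eqref{eq:kneed}, with the nontrivial one handled by induction on $n$.

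For the easy inclusion $V_0+V_1+\cdots+V_n \subseteq \ker(S^{n+1})$, I would argue as follows. Pick $k$ with $0\le k\le n$ and $v\in V_k$. Iterating Lemma \ref{lem:step3} gives
$S^{k+1}v \in S\,V_0 \subseteq V_{-1}=0$, hence $S^{n+1}v=0$. Summing over $k$ gives the inclusion.

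For the reverse inclusion I would proceed by induction on $n$. The base case $n=0$ is Lemma \ref{lem:stepA}. Assume $\ker(S^n)=V_0+V_1+\cdots+V_{n-1}$, and let $v\in\ker(S^{n+1})$. Then $Sv\in\ker(S^n)$, so by the inductive hypothesis we may write $Sv=w_0+w_1+\cdots+w_{n-1}$ with $w_k\in V_k$. For each $k$ with $0\le k\le n-1$, Lemma \ref{lem:step5} applied with $n$ replaced by $k+1$ tells us that $S:V_{k+1}\to V_k$ is a bijection; let $v_{k+1}\in V_{k+1}$ denote the unique preimage of $w_k$. Setting $u=v_1+v_2+\cdots+v_n$, we have $Su=Sv$, so $v-u\in\ker(S)=V_0$ by Lemma \ref{lem:stepA}. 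Therefore
\begin{align*}
v \;=\; (v-u)+v_1+v_2+\cdots+v_n \;\in\; V_0+V_1+\cdots+V_n,
\end{align*}
which completes the induction.

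There is no real obstacle here: the entire argument rests on the structural results already established, namely Lemma \ref{lem:stepA} (which identifies $\ker(S)$ with $V_0$) and Lemma \ref{lem:step5} (which provides the surjectivity of $S:V_{k+1}\to V_k$ needed to lift $w_k$ to $v_{k+1}$). The directness of the sum $\sum_{n}V_n$, noted above \eqref{eq:twoV}, plays no role in this particular lemma, though it ensures the decomposition on the right-hand side of \eqref{eq:kneed} is a direct sum.
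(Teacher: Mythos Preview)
Your proof is correct and follows essentially the same approach as the paper: induction on $n$, base case via Lemma~\ref{lem:stepA}, and the inductive step by lifting $Sv\in V_0+\cdots+V_{n-1}$ to an element of $V_1+\cdots+V_n$ using the bijectivity from Lemma~\ref{lem:step5}. The only difference is cosmetic: you decompose $Sv$ into components $w_k$ and lift each one separately, whereas the paper lifts the sum in one stroke; and you justify the easy inclusion by tracking $S^{k+1}v$ through Lemma~\ref{lem:step3}, whereas the paper just cites that lemma.
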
 
\begin{proof} We use induction on $n$. First assume that $n=0$. Then \eqref{eq:kneed} holds by Lemma \ref{lem:stepA}.
Next assume that $n\geq 1$.
The inclusion $\supseteq $ in \eqref{eq:kneed} holds by Lemma \ref{lem:step3}. We next obtain the inclusion $\subseteq $ in \eqref{eq:kneed}.
Let $v \in {\rm ker} (S^{n+1})$. We will show that $v \in V_0+ V_1 + \cdots + V_n$.
We have $0 = S^{n+1} v = S^{n} Sv$, so by induction $Sv \in V_0+V_1+ \cdots + V_{n-1}$.
By Lemma \ref{lem:step5}, there exists $w \in V_1+ V_2 + \cdots + V_n$ such that $Sw = Sv$. Therefore $S(w-v)=0$, so $w -v \in V_0$.
By these comments $v \in V_0 + V_1 + \cdots + V_n$.
\end{proof}
\begin{lemma} \label{lem:step7} We have $V=\sum_{n \in \mathbb N} V_n$.
\end{lemma}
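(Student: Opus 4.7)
The plan is to deduce this essentially immediately from Lemma \ref{lem:step6} together with the hypothesis that $S$ is locally nilpotent. The reverse containment $\sum_{n\in\mathbb N} V_n \subseteq V$ is automatic, so only the forward containment requires argument.

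First I would pick an arbitrary $v\in V$ and invoke local nilpotence: there exists $n\in\mathbb N$ with $S^{n+1}v=0$, that is, $v\in{\rm ker}(S^{n+1})$. Then I would apply Lemma \ref{lem:step6} to obtain $v\in V_0+V_1+\cdots+V_n$, which is contained in $\sum_{m\in\mathbb N} V_m$. Since $v\in V$ was arbitrary, this gives $V\subseteq \sum_{n\in\mathbb N} V_n$, completing the proof.

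There is no real obstacle here: all of the structural work has been carried out in the earlier lemmas of Appendix E. The local nilpotence hypothesis on $S$ is precisely what is needed to exhaust $V$ by the ascending family ${\rm ker}(S^{n+1})$, and Lemma \ref{lem:step6} identifies each of these kernels as a finite partial sum of the $V_n$. Combined with the remark above \eqref{eq:twoV} that the sum $\sum_{n\in\mathbb N} V_n$ is already known to be direct, the present lemma will then give a direct sum decomposition $V=\bigoplus_{n\in\mathbb N} V_n$, which is what is needed downstream (in particular, to conclude via Lemma \ref{lem:step5} that $S$ is surjective on $V$).
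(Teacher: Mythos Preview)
Your proof is correct and follows essentially the same approach as the paper: both use local nilpotence of $S$ to write $V=\bigcup_{n\in\mathbb N}\ker(S^{n+1})$ and then invoke Lemma~\ref{lem:step6} to identify each kernel with $V_0+\cdots+V_n$. The paper states this in one line, while you spell out the element-level argument, but the content is identical.
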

\begin{proof} Since $S$ is locally nilpotent, we have $V = \cup_{n \in \mathbb N} {\rm ker} (S^{n+1})$. The result follows from this and Lemma \ref{lem:step6}.
\end{proof}
\begin{lemma} \label{lem:step8}
The map $S$ is surjective.
\end{lemma}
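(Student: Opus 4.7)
The plan is to combine the decomposition of $V$ from Lemma \ref{lem:step7} with the bijectivity of $S$ between consecutive pieces $V_n$ from Lemma \ref{lem:step5}.

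Specifically, I would argue as follows. Let $v \in V$ be arbitrary. By Lemma \ref{lem:step7} we may write $v = v_0 + v_1 + \cdots + v_k$ with $v_i \in V_i$ for each $i$. For each index $i$ with $0 \leq i \leq k$, the map $S: V_{i+1} \to V_i$ is a bijection by Lemma \ref{lem:step5} (applied with $n = i+1 \geq 1$), so there exists $u_{i+1} \in V_{i+1}$ with $S u_{i+1} = v_i$. Setting $u = u_1 + u_2 + \cdots + u_{k+1}$, we obtain $Su = v$, proving that $S$ is surjective.

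I do not anticipate a serious obstacle here, since all the substantive work has already been done in Lemmas \ref{lem:stepA}--\ref{lem:step7}. The only point that needs a moment of care is recognizing that $V_0 = \ker S$ is \emph{not} in the image description directly, but this is irrelevant: the vectors of $V_0$ are hit from $V_1$ via Lemma \ref{lem:step5}, because $S: V_1 \to V_0$ is already known to be a bijection (and this uses the injectivity of $T$, established in Lemma \ref{lem:step1}, which is how the inverse in Lemma \ref{lem:step4} was built). So the surjectivity of $S$ on each piece $V_i$ is already available for every $i \geq 0$, and the proof is a one-paragraph assembly.
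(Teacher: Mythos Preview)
Your proposal is correct and takes essentially the same approach as the paper: the paper's proof is the one-line observation that $V = \sum_{n \in \mathbb N} V_n$ by Lemma \ref{lem:step7} and $V_n = S V_{n+1}$ for each $n \in \mathbb N$ by Lemma \ref{lem:step5}. Your argument simply unpacks this into an explicit preimage construction for a given $v$.
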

\begin{proof} By Lemma \ref{lem:step7}  and since $V_n = S V_{n+1}$ for $n \in \mathbb N$.
\end{proof}
                       
 %\section{Acknowledgements}
%The author thanks Pascal Baseilhac for many conversations about $U^+_q$ and $\mathcal U^+_q$.

%%%%%%%%%%%%%%%%%%%%%%%%%%%

%

\bigskip

\noindent Paul Terwilliger \hfil\break
\noindent Department of Mathematics \hfil\break
\noindent University of Wisconsin \hfil\break
\noindent 480 Lincoln Drive \hfil\break
\noindent Madison, WI 53706-1388 USA \hfil\break
\noindent email: {\tt terwilli@math.wisc.edu }\hfil\break


\begin{thebibliography}{10}

%\bibitem{RCA}
%R.~C.~Alperin.
%\newblock 
%${\rm  {PSL}}_2(\mathbb Z)= \mathbb Z_2 \star \mathbb Z_3$.
%\newblock {\em Amer. Math. Monthly}  100 (1993) 385--386.

%%%%%%%%%%%%%%%%%%%%%%%%%%%%%%%%%%%%%%%%%%%%%%%%%%%%%%%%%%%%%%%%%%%%


%%%%%%%%%%%%%%%%%%%%%%%%%%
%\bibitem{basDef}
% P.~Baseilhac.
%  \newblock
%   The $q$-deformed analogue of the Onsager algebra:
%     beyond the Bethe ansatz approach.
%      \newblock {\em Nuclear Phys. B}  754
%       (2006) 309--328;
%        {\tt arXiv:math-ph/0604036}.
%%%%%%%%%%%%%%%%%%%%%%%%%%%%






%%
% \bibitem{bas8}
% P.~Baseilhac and S.~Belliard.
%  \newblock
%   Generalized $q$-Onsager algebras and boundary affine Toda field theories.
%    \newblock {\em
%     Lett. Math. Phys.} {93}  (2010)  213--228;
% {\tt arXiv:0906.1215}.
%%%%%%%%%%%%

\bibitem{CurtH}
H.~Alnajjar  and B.~Curtin.
\newblock
A bilinear form for tridiagonal pairs of $q$-Serre type.
\newblock {\em
Linear Algebra Appl.} 428 (2008) 2688--2698.


\bibitem{hasan2}
H.~Alnajjar  and B.~Curtin.
\newblock
A family of tridiagonal pairs related to
the quantum affine algebra 
$U\sb q(\widehat{\mathfrak{sl}}\sb 2)$.
\newblock {\em
Electron. J. Linear Algebra}
13 (2005) 1--9. 


\bibitem{hasan}
H.~Alnajjar  and B.~Curtin.
\newblock
A family of tridiagonal pairs.
\newblock {\em
Linear Algebra Appl.}
390
(2004)
369--384.




%\bibitem{BC17}
%P.~Baseilhac and N.~Cramp\'e.
%\newblock
 %FRT presentation of classical Askey-Wilson algebras.
 %\newblock{\em  Lett. Math. Phys.} 109 (2019)  2187--2207;
 %{\tt arXiv:1806.07232}.
\bibitem{ariki}
S.~Ariki.
\newblock Representations of quantum algebras and combinatorics of Young tableaux. 
\newblock Translated from the 2000 Japanese edition and revised by the author.
\newblock University Lecture Series, 26. American Mathematical Society, Providence, RI, 2002.




\bibitem{basFMA}
P.~Baseilhac.
\newblock The alternating presentation of $U_q(\widehat{gl_2})$ from  Freidel-Maillet algebras.
\newblock{\em  Nuclear Phys. B} 967 (2021) 115400;  {\tt arXiv:2011.01572}.

\bibitem{baspp}
P.~Baseilhac.
\newblock
On the second realization for the positive part of $U_q(\widehat{\mathfrak{sl}}_2)$ of equitable type.
Preprint; 
{\tt arXiv:2106.11706}.


 \bibitem{basXXZ}
P.~Baseilhac and  S.~Belliard.
\newblock
The half-infinite XXZ chain in Onsager's approach.
\newblock
{\em 
Nuclear Phys. B} 873 (2013) 550--584;
{\tt arXiv:1211.6304}.


\bibitem{BK05}
P.~Baseilhac and K.~Koizumi.
\newblock A new (in)finite dimensional algebra for
quantum integrable models.
\newblock {\em 
 Nuclear Phys. B}  720  (2005) 325--347;
   {\tt arXiv:math-ph/0503036}.

\bibitem{BKojima}
P.~Baseilhac and T.~ Kojima.
\newblock Correlation functions of the half-infinite XXZ spin chain with a triangular boundary.
\newblock{\em  Nuclear Phys. B}
 880 (2014) 378--413;
 {\tt arXiv:1309.7785}.




\bibitem{basnc}
 P.~Baseilhac and K.~Shigechi.
  \newblock
   A new current algebra and the reflection equation.
    \newblock{\em
     Lett. Math. Phys. }
       92
        (2010)   47--65;
{\tt arXiv:0906.1482v2}.

%%%%%%%%%%%%%%%%%%%%
%\bibitem{bc}
%S.~Belliard and N.~Crampe.
%\newblock
%Coideal algebras from twisted Manin triples.
%\newblock
%{\em J. Geom. Phys.} 62 (2012) 2009--2023;
%{\tt arXiv:1202.2312}
%%%%%%%%%%%%%%%%%%%%%
%%%%%%%%%%%%%
 
%%%%%%%%%%%%%%%%%%%%%%%%%%%%%%%%%%%%%%%%%%%%%%%%%%%%%%%%%%%%%%%%



%\bibitem{berg}
%G.~Bergman.
%\newblock
%The diamond  lemma for ring theory.
%\newblock{\em Adv. in Math.} {\bf 29} (1978) 178--218.






%\bibitem{bruIntro}
%R.~A.~Brualdi,
%\newblock{\em
%Introductory Combinatorics, 5E},
% Pearson Prentice Hall, Upper Saddle River, NJ, 2010. 

%\bibitem{carter}
%R.~Carter.
%\newblock  
%{\em Lie algebras of finite and affine type}.
%\newblock Cambridge Studies in Advanced Mathematics 96.
%Cambridge University Press, 2005.

\bibitem{beck}
 J.~Beck.
 \newblock
  Braid group action and quantum affine algebras.
  \newblock{\em 
   Commun. Math. Phys.} (1994) 555--568;
   {\tt arXiv:hep-th/9404165}.
   
   \bibitem{bcp}
 J.~Beck, V.~Chari,  A.~Pressley.
 \newblock
  An algebraic characterization of the affine canonical basis.
  \newblock{\em Duke Math. J.}  (1999)
455--487;
{\tt arXiv:math/9808060}.





\bibitem{bruIntro}
R.~A.~Brualdi,
\newblock{\em
Introductory Combinatorics, 5E},
 Pearson Prentice Hall, Upper Saddle River, NJ, 2010. 


%\bibitem{beck4}
%J.~Beck, H.~Nakajima.
%\newblock
%Crystal bases and two-sided cells of quantum affine
%algebras.
%\newblock
%{\em Duke Math. J.} 123 (2004) 335--402.


\bibitem{carter}
R.~W.~Carter.
\newblock {\em Lie algebras of finite and affine type}.
\newblock Cambridge Studies in Advanced Mathematics, 96. Cambridge University Press, Cambridge, 2005. 





\bibitem{charp}
V.~Chari and A.~Pressley.
\newblock {Q}uantum affine algebras.
\newblock {\em Commun. Math. Phys.}
142 (1991) 261--283. 



\bibitem{damiani}
I.~Damiani.
\newblock
A basis of type Poincare-Birkoff-Witt for the quantum
algebra of $\widehat{\mathfrak{sl}}_2$.
\newblock {\em
J. Algebra} 161 (1993) 291--310.


\bibitem{DaviesPlus}
 %%Davies, Brian; Foda, Omar; Jimbo, Michio; Miwa, Tetsuji; Nakayashiki, Atsushi Diagonalization of the XXZ Hamiltonian by vertex operators. Comm. Math. Phys. 151 (1993), no. 1, 89–153. 
 B.~Davies, O.~Foda, M.~Jimbo, T.~ Miwa, A.~Nakayashiki.
 \newblock Diagonalization of the XXZ Hamiltonian by vertex operators. 
 \newblock{\em Comm. Math. Phys.} 151 (1993)  89--153;
 {\tt arXiv:hep-th/9204064}.
 





%\bibitem{Dolgra}
%L.~Dolan and M.~Grady.
%\newblock Conserved charges from self-duality.
%\newblock {\em Phys. Rev. D (3)} { 25} (1982) 1587--1604.


%\bibitem{DF}
%J.~Ding and I.~B.~Frenkel.
%\newblock
%Isomorphism of two realizations of quantum affine algebra $U_q(\widehat{\mathfrak{gl}(n)})$.
%\newblock{\em 
%Comm. Math. Phys.} 156 (1993)  277--300.

%\bibitem{fulton}
%W.~Fulton.
%\newblock{\em
 %Young tableaux. With applications
%to representation theory and geometry}.
%\newblock
%London
%Mathematical Society Student Texts 35 (1997),
%Cambridge University Press, Cambridge.




\bibitem{green}
J.~A.~Green.
\newblock Shuffle algebras, Lie algebras and quantum groups.
\newblock
%%%%%%M.~B{\'o}na.
{\em Textos de Matem{\'a}tica. S{\'e}rie B [Texts in Mathematics. Series B]},
9. Universidade de Coimbra,
Departamento de Matem{\'a}tica, Coimbra, 1995. vi+29 pp.


\bibitem{grosse}
P.~Gross{\'e}.
\newblock
On quantum shuffle and quantum affine algebras. 
\newblock{\em 
J. Algebra} 318 (2007) 495--519; {\tt arXiv:math/0107176}.



\bibitem{hongkang}
J.~Hong and S.~Kang.
\newblock
{\em Introduction to quantum groups and crystal bases}.
\newblock
Graduate Studies in Mathematics, 42.
American Mathematical Society,
Providence, RI, 2002.


\bibitem{TD00}
T.~Ito, K.~Tanabe, P.~Terwilliger.
\newblock Some algebra related to ${P}$- and ${Q}$-polynomial association
 schemes,  in:
 \newblock {\em Codes and Association Schemes (Piscataway NJ, 1999)}, Amer.
  Math. Soc., Providence RI, 2001, pp.
        167--192;
    {\tt arXiv:math.CO/0406556}.

\bibitem{shape}
T.~Ito and P.~Terwilliger.
\newblock
The shape of a tridiagonal pair.
\newblock{\em  J. Pure Appl. Algebra} 188 (2004) 145--160.
{\tt arXiv:math/0304244}.


\bibitem{uqsl2hat}
T.~Ito and P.~Terwilliger.
\newblock
Tridiagonal pairs and the quantum affine algebra
$U_q(\widehat {\mathfrak {sl}}_2)$. 
\newblock
{\em
Ramanujan J.} 13 (2007) 39--62;
\newblock
{\tt arXiv:math/0310042}.

\bibitem{nonnil}
T.~Ito and P.~Terwilliger.
\newblock
Two non-nilpotent linear transformations that satisfy the cubic $q$-Serre 
relations.
\newblock {\em
J. Algebra Appl.}  6 (2007) 477--503;
\newblock
{\tt arXiv:math/0508398}.

\bibitem{qtet}
T.~Ito and P.~Terwilliger.
\newblock
The $q$-tetrahedron algebra and its finite-dimensional irreducible modules.
\newblock
{\em Comm. Algebra}  35 (2007) 3415--3439;
\newblock
{\tt arXiv:math/0602199}.

\bibitem{drg}
T.~Ito and P.~Terwilliger.
\newblock Distance-regular graphs and the 
$q$-tetrahedron algebra.
\newblock{\em European J. Combin.} {30} (2009)
682--697;
{\tt arXiv:math.CO/0608694}.




\bibitem{JM}
M.~Jimbo and T.~Miwa.
\newblock
Algebraic analysis of solvable lattice models.
\newblock 
CBMS Regional Conference Series in Mathematics, 85. 
\newblock Published for the Conference Board of the Mathematical Sciences, Washington, DC; by the American Mathematical Society, Providence, RI, 1995.

\bibitem{kac}
V.~G.~Kac.
\newblock {\em Infinite dimensional Lie algebras}.
\newblock Cambridge University Press, Cambridge, third edition, 1990.



\bibitem{leclerc}
B.~Leclerc.
\newblock
Dual canonical bases, quantum shuffles and $q$-characters.
\newblock{\em Math. Z.}
246 (2004)  691--732;
{\tt arXiv:math/0209133}.
%%% thm 4, section 2.6
%%%p 696





%\bibitem{LuWang}
%M.~Lu and W.~Wang.
%\newblock
%A Drinfeld type presentation of affine $\iota$quantum groups I: split ADE type.
%Preprint; 	{\tt arXiv:2009.04542}.


\bibitem{lusztig}
G.~Lusztig.
\newblock
{\em
Introduction to quantum groups}.
\newblock
Progress in Mathematics, 110. Birkhauser, Boston, 1993.
%\bibitem{Onsager}
%L.~Onsager.
%\newblock
%Crystal statistics. I.
%A two-dimensional model with an
%order-disorder transition.
%\newblock{\em Phys. Rev. (2)}  { 65} (1944)
%117--149.


%\bibitem{perk}
%J.~H.~H. Perk.
%\newblock
%Star-triangle relations, quantum Lax pairs,
%and higher genus curves.
%\newblock{\em
%Proceedings of Symposia in Pure Mathematics}
%{ 49} pp.~341--354.
%\newblock Amer. Math. Soc., Providence, RI, 1989.
\bibitem{negut}
A.~Negut and  A.~Tsymbaliuk.
\newblock
Quantum loop groups and shuffle algebras via Lyndon words. Preprint;
 {\tt arXiv:2102.11269}.



\bibitem{boxq}
S.~Post and P.~Terwilliger.
\newblock
An infinite-dimensional $\square_q$-module
obtained from  the $q$-shuffle
algebra  for affine $\mathfrak{sl}_2$.
\newblock{\em 
SIGMA Symmetry Integrability Geom. Methods Appl.} 16 (2020), Paper No. 037, 35 pp;
{\tt arXiv:1806.10007}.


\bibitem{rosso1}
M.~Rosso.
\newblock
Groupes quantiques et alg{\`e}bres de battage quantiques.
\newblock{\em
C.~R. Acad. Sci. Paris} 320 (1995) 145--148.
%%%%%%%%%%%%%%%%%%5

\bibitem{rosso}
M.~Rosso.
\newblock
Quantum groups and quantum shuffles.
\newblock{\em
Invent. Math} 133 (1998) 399--416.

\bibitem{stanley}
R.~P.~Stanley.
\newblock{\em  Enumerative combinatorics. Vol. 2.}
\newblock  Cambridge Studies in Advanced Mathematics, 62. Cambridge University Press, Cambridge, 1999.



%\bibitem{sage}
%The Sage Developers.
%\newblock Sage Mathematics Software (Version 9.2). The Sage Development Team, 2020.
%\newblock {\tt http://www.sagemath.org}.

%%%%%%%%%%%%%

\bibitem{TwoRel}
P.~Terwilliger.
\newblock
Two relations that generalize the $q$-Serre relations and the
Dolan-Grady relations.
\newblock{\em
Physics and combinatorics 1999 (Nagoya), 377-398},
\newblock
World Sci. Publ., River Edge, NJ, 2001;
{\tt arXiv:math/0307016}.





\bibitem{pospart}
P.~Terwilliger.
\newblock
The $q$-Onsager algebra and the positive part of
$U_q(\widehat{\mathfrak{sl}}_2)$.
\newblock{\em
 Linear Algebra Appl.}
 521 (2017) 19--56;
{\tt arXiv:1506.08666}.


 
%\bibitem{catalan}
%P.~Terwilliger.
%\newblock
%Using Catalan words and a $q$-shuffle algebra to describe
%a PBW basis for the positive part of 
%$U_q(\widehat{\mathfrak{sl}}_2)$.
%\newblock{\em J. Algebra}, to appear;
%{\tt arXiv:1806.11228}.
%%%%%%%%%




\bibitem{catalan}
P.~Terwilliger.
\newblock
Using Catalan words and a $q$-shuffle algebra to describe
a PBW basis for the positive part of 
$U_q(\widehat{\mathfrak{sl}}_2)$.
\newblock{\em J. Algebra} 525 (2019) 359--373;
{\tt arXiv:1806.11228}.
%%%%%%%%%

\bibitem{alternating}
P.~Terwilliger.
\newblock
The alternating PBW basis for the positive part
of $U_q(\widehat{\mathfrak{sl}}_2)$.
\newblock {\em J. Math. Phys.}
60 (2019)  071704;
{\tt arXiv:1902.00721}.


\bibitem{altCE}
P.~Terwilliger.
\newblock
The alternating central extension for the positive part of
$U_q(\widehat{\mathfrak{sl}}_2)$.
\newblock {\em
Nuclear Phys. B}
947
(2019)
114729;
{\tt arXiv:1907.09872}.


 
\bibitem{compactUqp}
P.~Terwilliger.
\newblock
The compact presentation for the alternating central extension of the positive part of
$U_q(\widehat{\mathfrak{sl}}_2)$.
\newblock Preprint; 
{\tt arXiv:2011.02463}.



\bibitem{factorUq}
P.~Terwilliger.
\newblock
The algebra $U^+_q$ and its alternating central extension $\mathcal U^+_q$.
\newblock Preprint;
{\tt arXiv:2106.14884}.




\bibitem{catBeck}
P.~Terwilliger.
\newblock
Using Catalan words and a $q$-shuffle algebra to describe the Beck PBW basis for the positive part of $U_q(\widehat{\mathfrak{sl}}_2)$. Preprint;
{\tt arXiv:2108.12708}.


\bibitem{pbwqO}
P.~Terwilliger.
\newblock
The alternating central extension 
 of the $q$-Onsager algebra.
\newblock{\em Commun. Math. Phys.} https://doi.org/10.1007/s00220-021-04171-2;
 \newblock  {\tt arXiv:2103.03028}.
 
 \bibitem{compQons}
 P.~Terwilliger.
 \newblock
 The compact presentation for the alternating central extension of the $q$-Onsager algebra.
 \newblock Preprint; {\tt arXiv:2103.11229}.

\bibitem{qOnsACE}
P.~Terwilliger.
\newblock
The $q$-Onsager algebra and its alternating central extension.
\newblock Preprint;
{\tt arXiv:2106.14041}.



\bibitem{xxz}
J.~Xiao, H.~Xu, M.~Zhao.
\newblock
On bases of quantum affine algebras.
\newblock Preprint;
{\tt arXiv:2107.08631}.

\bibitem{yy2}
Y.~Yang. 
\newblock{Finite-dimensional irreducible
$\square_q$-modules and their Drinfel'd polynomial}.
\newblock{\em Linear Algebra Appl.} 
537 (2018) 160--190; 
{\tt arXiv:1706.00518}.



 \end{thebibliography}
\end{document}